\def\bsx{{\boldsymbol{x}}}
\def\bsy{{\boldsymbol{y}}}
\def\calA{{\mathcal{A}}}
\def\calP{{\mathcal{P}}}
\def\calU{{\mathcal{U}}}
\def\calX{{\mathcal{X}}}
\def\Real{{\mathbb{R}}}
\definecolor{gray}{RGB}{128,128,128}
\newtheorem{assumption}{Assumption}
\newtheorem{theorem}{Theorem}
\newtheorem{lemma}{Lemma}
\newtheorem{remark}{Remark}
\newtheorem{corollary}{Corollary}
\DeclareMathOperator*{\argmin}{arg\,min}
\DeclareMathOperator{\Reg}{Reg}
\DeclareMathOperator{\col}{col}
\DeclareMathOperator{\inout}{in}
\DeclareMathOperator{\outin}{out}
\newcolumntype{M}[1]{>{\centering\arraybackslash}m{#1}}
\newcolumntype{N}{@{}m{0pt}@{}}
\title{\LARGE \bf
Distributed Bandit Online Convex Optimization\\ with Time-Varying Coupled Inequality Constraints
}
\author{Xinlei Yi, Xiuxian Li, Tao Yang, Lihua Xie, Karl H. Johansson, and Tianyou Chai
\thanks{This work was supported by the
Knut and Alice Wallenberg Foundation, the  Swedish Foundation for Strategic Research, the Swedish Research Council, and Ministry of Education of Republic of Singapore under Grant MoE Tier 1 RG72/19.}
\thanks{X. Yi and K. H. Johansson are with the Division of Decision and Control Systems, School of Electrical Engineering and Computer Science, KTH Royal Institute of Technology, 100 44, Stockholm, Sweden.
        {\tt\small \{xinleiy, kallej\}@kth.se}.}%
\thanks{X. Li and L. Xie are with School of Electrical and Electronic Engineering,
Nanyang Technological University, 50 Nanyang Avenue, Singapore 639798. {\tt\small \{xiuxianli, elhxie\}@ntu.edu.sg}.}
\thanks{T. Yang and T. Chai are with the State Key Laboratory of Synthetical Automation for Process Industries, Northeastern University, 110819, Shenyang, China. {\tt\small \{yangtao,tychai\}@mail.neu.edu.cn}.}
}
\begin{document}

\maketitle
\thispagestyle{empty}
\pagestyle{empty}

\begin{abstract}\label{cdc18so:Abstract}
This paper considers the problem of distributed bandit online convex optimization with time-varying coupled inequality constraints. This problem can be defined as a repeated game between a group of learners and an adversary. The learners attempt to minimize a sequence of global loss functions and at the same time satisfy a sequence of coupled constraint functions. The global loss and the coupled constraint functions are the sum of local convex loss and constraint functions, respectively, which are adaptively generated by the adversary. The local loss and constraint functions are revealed in a bandit manner, i.e., only the values of loss and constraint functions at sampled points are revealed to the learners, and the revealed function values are held privately by each learner. We consider two scenarios, one- and two-point bandit feedback, and propose two corresponding distributed bandit online algorithms used by the learners. We show that sublinear expected regret and constraint violation are achieved by these two algorithms, if the accumulated variation of the comparator sequence also grows sublinearly. In particular, we show that $\mathcal{O}(T^{\theta_1})$ expected static regret and $\mathcal{O}(T^{7/4-\theta_1})$ constraint violation are achieved in the one-point bandit feedback setting, and  $\mathcal{O}(T^{\max\{\kappa,1-\kappa\}})$ expected static regret and $\mathcal{O}(T^{1-\kappa/2})$ constraint violation in the two-point bandit feedback setting, where $\theta_1\in(3/4,5/6]$ and $\kappa\in(0,1)$ are user-defined trade-off parameters. Finally, these theoretical results are illustrated by numerical simulations of a simple power grid example.

\emph{Index Terms}---Bandit convex optimization, distributed optimization, gradient approximation, online optimization, time-varying constraints
\end{abstract}
\section{Introduction}\label{dbco:introduction}
Online convex optimization is a promising methodology for modeling sequential tasks and has important applications in machine learning \cite{shalev2012online}, smart grids \cite{zhou2017incentive}, sensor networks \cite{shahrampour2017distributed,yuan2017adaptive}, and so on. It can be traced back to the 1990s \cite{cesa1996worst,gentile1999linear,gordon1999regret,zinkevich2003online}. Online convex optimization can be understood as a repeated game between a learner and an adversary \cite{shalev2012online}. At round $t$ of the game, the learner chooses a point $x_t$ from a known convex set $\mathbb{X}$. Then, the adversary observes $x_t$ and chooses a convex loss function $f_t:\mathbb{X}\rightarrow \mathbb{R}$. After that, the loss function $f_t$ is revealed to the learner who suffers a loss $f_t(x_t)$. Note that at each round the loss function can be arbitrarily chosen by the adversary, especially with no probabilistic model imposed on the choices, which is the key difference between online and stochastic convex optimization. Such an adversary with the power to arbitrarily choose the loss functions is said to be a completely adaptive adversary \cite{agarwal2010optimal}. The goal of the learner is to choose a sequence $(x_1,\dots,x_T)$ such that her regret\footnote{In the literature, this is called static regret. Another commonly used metric is the dynamic regret $\sum_{t=1}^{T}f_t(x_t)-\sum_{t=1}^{T}\min_{y_t\in\mathbb{X}}f_t(y_t)$. Actually, analysis for dynamic regret is an intermediate step towards the analysis for static regret.} $\sum_{t=1}^{T}f_t(x_t)-\min_{y\in\mathbb{X}}\sum_{t=1}^{T}f_t(y)$ is minimized, where $T$ is the total number of rounds.  Over the past two decades, online convex optimization has been extensively studied, e.g., \cite{shalev2012online,shahrampour2017distributed,yuan2017adaptive,zinkevich2003online,hazan2007logarithmic,jenatton2016adaptive,sun2017safety,neely2017online,yu2017online,yu2016low,wei2019online,sadeghi2019online}. All existing online algorithms require the knowledge of the entire loss function or the gradient of the loss function. In particular, it is known that projection-based online gradient descent algorithm achieves an $\mathcal{O}(\sqrt{T})$ static regret bound for loss functions with bounded subgradients and that this is a tight bound up to constant factors \cite{hazan2007logarithmic}.


Bandit online convex optimization is online convex optimization with bandit feedback, i.e., at each round only the values of the loss functions are revealed, rather than the entire loss function, the gradient of the loss function, or some other information. Bandit feedback is suitable to model various applications, where the entire function or gradient information is not available, such as online source localization, online routing in data networks, and online advertisement placement in web search \cite{hazan2016introduction}. For such applications, existing online algorithms are inapplicable but gradient-free (zeroth-order) optimization methods are needed.
Gradient-free optimization methods  have a long history \cite{matyas1965random} and have an evident advantage since computing a function value is much simpler than computing its gradient. Gradient-free optimization methods have gained renewed interests in recent years, e.g., \cite{nesterov2017random,yuan2014randomized,pang2019randomized,tang2019distributed}. Essentially, bandit online convex optimization is a gradient-free method to solve convex optimization problems.  However, in a bandit setting, the sublinear static regret bound may not be guaranteed if the adversary still can arbitrarily choose the loss function. Under completely adaptive adversary, the authors of \cite{agarwal2010optimal} gave an example to show that any algorithm suffer at least a linear regret. Therefore, the power of the adversary should be limited. The adversary chooses $f_t$ based only on the learner's past decisions $x_1,\dots,x_{t-1}$, but not on her current decision $x_t$. In other words, the adversary chooses $f_t$ at the beginning of round $t$, before the learner choose her decision. The adversary with such a limited power is said to be an adaptive adversary \cite{agarwal2010optimal}. 

A key step in bandit online convex optimization is to estimate the gradient of the loss function by sampling the loss function.
Various algorithms have been developed  and can be divided into two categories depending on the number of samplings. Algorithms with one sampling at each round have been proposed in \cite{flaxman2005online,dani2008price,abernethy2009competing,abernethy2012interior,saha2011improved,hazan2014bandit,bubeck2015bandit,bubeck2016multi,hazan2016optimal}. Specifically, in \cite{flaxman2005online}, $\mathcal{O}(T^{3/4})$ expected static regret was achieved for  Lipschitz-continuous functions. Better regret bounds can be guaranteed if additional assumptions are made. The authors of \cite{dani2008price} considered linear loss functions and achieved $\mathcal{O}(\sqrt{T})$ expected static regret. The authors of  \cite{abernethy2009competing,abernethy2012interior} also considered linear loss functions and proposed algorithms that achieved $\mathcal{O}(\sqrt{T\log(T)})$ expected static regret. The authors of \cite{saha2011improved} studied smooth loss functions and achieved $\mathcal{O}(T^{2/3}(\log(T))^{1/3})$ expected static regret. The authors of \cite{hazan2014bandit} considered strongly convex and smooth loss functions and achieved $\mathcal{O}(\sqrt{T\log(T)})$ expected static regret. One common assumption in \cite{abernethy2009competing,abernethy2012interior,saha2011improved,hazan2014bandit} is that the convex domain admits a self-concordant barrier. The authors of \cite{bubeck2015bandit} showed that $\mathcal{O}(\sqrt{T}\log(T))$ expected static regret can be achieved for Lipschitz-continuous loss functions with one-dimensional domains, but they did not develop any explicit algorithm. This result was extended to arbitrary dimensions in \cite{bubeck2016multi}, but still without any explicit algorithm. Based on the ellipsoid method to online learning, the authors of \cite{hazan2016optimal} proposed an algorithm for Lipschitz-continuous loss functions and achieved $\mathcal{O}(\sqrt{T}\log(T))$ expected static regret.

Algorithms with two or more samplings at each round have been proposed in \cite{agarwal2010optimal,duchi2015optimal,shamir2017optimal,yi2016tracking,tatarenko2018minimizing,shames2019online}. The expected static regret bounds can then be reduced compared to the one-sample case. The authors of \cite{agarwal2010optimal} extended the one-point sampling bandit algorithm proposed in \cite{flaxman2005online} to a two-point sampling algorithm and obtained $\mathcal{O}(\log(T))$ expected static regret for Lipschitz-continuous and strongly convex loss functions. Moreover, with $p+1$ samplings at each round, where $p$ is the state dimension, they proposed a deterministic algorithm and showed that the algorithm can achieve $\mathcal{O}(\sqrt{T})$ regret for Lipschitz-continuous and smooth loss functions, and $\mathcal{O}(\log(T))$ expected static regret for strongly convex and smooth loss functions. The author of \cite{shamir2017optimal} proposed a simple algorithm with two samplings at each round and obtained $\mathcal{O}(\sqrt{T})$ expected static regret for Lipschitz-continuous loss functions. Without assuming that the decision set is bounded, the author of \cite{tatarenko2018minimizing} proposed a class of algorithms with one or two samplings at each round and obtained $\mathcal{O}(T^{2/3})$ and $\mathcal{O}(\sqrt{T})$ expected static regrets, respectively, for smooth loss functions.

Aforementioned studies did not consider equality or inequality constraints. In the literature, there are few papers considering bandit online convex  optimization with such constraints, although such constraints are common in applications. The authors of \cite{mahdavi2012trading} studied online convex optimization with static inequality constraints and bandit feedback for constraints, while the authors of \cite{chen2018bandit} studied online convex optimization with time-varying inequality constraints and bandit feedback for loss functions. The authors of \cite{cao2019online} studied online convex optimization with time-varying inequality constraints and bandit feedback for both loss and constraint functions.

Most existing bandit online convex optimization studies are in a centralized setting and
only few papers considered distributed bandit online convex optimization. When loss functions are strongly convex, the authors of \cite{yuan2016online} proposed a consensus-based distributed bandit online algorithm with one sampling at each round and obtained $\mathcal{O}(\sqrt{T}\log(T))$ expected static regret. When loss functions are quadratic, the authors of \cite{yuan2019distributed} proposed a consensus-based distributed bandit online algorithm with two samplings at each round and obtained  $\mathcal{O}(\sqrt{T})$ expected static regret when there are set constraints. When there are static linear inequality constraints, they also established $\mathcal{O}(T^{\max\{\beta,1-\beta\}})$ and $\mathcal{O}(T^{1-\beta/2})$ bounds on the expected static regret and constraint violation, respectively, where $\beta\in(0,1)$ is a user-defined trade-off parameter.

This paper considers the problem of distributed bandit online convex optimization with time-varying coupled inequality constraints. This problem can be interpreted as a repeated game between a group of learners and an adversary. The learners attempt to minimize a sequence of global loss functions and at the same time satisfy a sequence of coupled constraint functions. The global loss and the coupled constraint functions are the sum of local convex loss and constraint functions, respectively. They are generated adaptively by the adversary. The local loss and constraint functions are revealed in a bandit manner and the revealed information is held privately by each learner. Specifically, at each round each learner can sample her local loss and constraint function at one point (i.e., one-point bandit feedback) or two points (i.e., two-point bandit feedback). Compared to existing studies, the contributions of this paper are summarized as follows.

In the one-point bandit feedback setting, we propose a distributed bandit online algorithm with a one-point sampling gradient estimator to solve the considered problem. To the best of our knowledge, this is the first algorithm to solve the online convex optimization problem with time-varying inequality constraints in the one-point bandit feedback setting. An advantage of our algorithm is that the total number of rounds is not used in the algorithm, which is an improvement compared to the one-point sampling algorithms in \cite{flaxman2005online,abernethy2009competing,abernethy2012interior,saha2011improved,hazan2014bandit,chen2018bandit,yuan2016online}, although these paper did not consider bandit feedback for the time-varying inequality constraints or did not even consider time-varying inequality constraints at all. Sublinear expected regret and constraint violation are achieved by this algorithm if the accumulated variation of the comparator sequence also grows sublinearly. In particular, $\mathcal{O}(T^{\theta_1})$ expected static regret and $\mathcal{O}(T^{7/4-\theta_1})$ constraint violation are achieved, where $\theta_1\in(3/4,5/6]$ is a user-defined trade-off parameter. Specifically, when there are no inequality constraints, the proposed algorithm achieves $\mathcal{O}(T^{3/4})$ expected static regret. The same expected static regret bound has been achieved by the one-point sampling algorithm in \cite{flaxman2005online}. However, in \cite{flaxman2005online} the total number of iterations as well as the Lipschitz constant and upper bound of the loss functions are needed.

In the two-point bandit feedback setting, we propose a distributed bandit online algorithm with a two-point sampling gradient estimator. This algorithm does not require the total number of rounds  or any other parameters related to the loss or constraint functions, which is different from the two-point sampling algorithms in \cite{agarwal2010optimal,duchi2015optimal,shamir2017optimal,yi2016tracking,shames2019online,mahdavi2012trading,chen2018bandit,cao2019online,yuan2019distributed}.
In an average sense, this algorithm is as efficient as the algorithms proposed in \cite{jenatton2016adaptive,sun2017safety,mahdavi2012trading,yi2019distributed}, although \cite{jenatton2016adaptive,sun2017safety,yi2019distributed} are in a full-information feedback setting and \cite{mahdavi2012trading} considers bandit setting only for the constraint functions. In particular, $\mathcal{O}(T^{\max\{\kappa,1-\kappa\}})$ expected static regret and $\mathcal{O}(T^{1-\kappa/2})$ constraint violation are achieved by our algorithm, where $\kappa\in(0,1)$ is a user-defined parameter. Compared with the bandit algorithm in \cite{shamir2017optimal}, which achieved $\mathcal{O}(\sqrt{T})$ expected static regret under static set constraints and centralized computations using the total number of rounds as well as the Lipschitz constant of the loss function, we relax all these assumptions.

The rest of this paper is organized as follows. Section~\ref{dbco:preliminary} introduces the preliminaries. Section~\ref{dbco:problem} gives the problem formulation and a motivating example. Sections~\ref{dbco:mainresult1} and \ref{dbco:mainresult2} provide the distributed bandit online algorithms for one- and two-point bandit feedback, respectively, and present their expected regret and constraint violation bounds. Section~\ref{dbco:simulation} gives numerical simulations for the motivating example. Finally, Section~\ref{dbco:conclusions} concludes the paper. Proofs are given in the Appendix.

\noindent {\bf Notations}: All inequalities and equalities are understood componentwise. $\mathbb{R}^p$ and $\mathbb{R}^p_+$ denote the set of $p$-dimensional vectors and nonnegative vectors, respectively. $\mathbb{N}_+$ stands for the set of positive integers. $[n]$ represents the set $\{1,\dots,n\}$ for any $n\in\mathbb{N}_+$.
$[x]_j$ is the $j$-th element of a vector $x\in\mathbb{R}^p$.
$\langle x,y\rangle$ denotes the standard inner product of two vectors $x$ and $y$. $x^\top$ stands for the transpose of the vector or matrix $x$.
$\|\cdot\|$ ($\|\cdot\|_1$) represents the Euclidean norm (1-norm) for vectors and the induced 2-norm (1-norm) for matrices.
$\mathbb{B}^p$ and $\mathbb{S}^p$ are the unit ball and sphere centered around the origin in $\mathbb{R}^p$ under Euclidean norm, respectively.
${\bf I}_n$ denotes the $n$-dimensional identity matrix.
${\bf 1}_n$ (${\bf 0}_n$) stands for the column one (zero)
vector of dimension $n$. $\col(z_1,\dots,z_k)$ represents the concatenated column vector of vectors $z_i\in\mathbb{R}^{n_i},~i\in[k]$.
$\log(\cdot)$ is the natural logarithm. Given two scalar sequences $\{\alpha_t,~t\in\mathbb{N}_+\}$ and $\{\beta_t>0,~t\in\mathbb{N}_+\}$, $\alpha_t=\mathcal{O}(\beta_t)$ means that $\limsup_{t\rightarrow\infty}(\alpha_t/\beta_t)$ is bounded, while $\alpha_t=\mathbf{o}(\beta_t)$  means that $\lim_{t\rightarrow\infty}(\alpha_t/\beta_t)=0$. For a set $\mathbb{K}\subseteq\mathbb{R}^p$, $\calP_{\mathbb{K}}(\cdot)$ denotes the projection operator, i.e.,  $\calP_{\mathbb{K}}(x)=\argmin_{y\in\mathbb{K}}\|x-y\|^2,~\forall x\in\Real^{p}$. For simplicity, $[\cdot]_+$ is used to denote $\calP_{\mathbb{K}}(\cdot)$ when $\mathbb{K}=\mathbb{R}^p_+$.

\section{Preliminaries}\label{dbco:preliminary}
In this section, we present some definitions and properties related to graph theory and gradient approximation.

\subsection{Graph Theory}
Let $\mathcal{G}_t=(\mathcal{V},\mathcal{E}_t)$ denote a time-varying directed graph, where $\mathcal{V}=[n]$ is the agent set and $\mathcal{E}_t\subseteq\mathcal{V}\times\mathcal{V}$ is the edge set. A directed edge $(j,i)\in\mathcal{E}_t$ means that agent $i$ can receive data from agent $j$ at time $t$. Let $\mathcal{N}^{\inout}_i(\mathcal{G}_t)=\{j\in [n]\mid (j,i)\in\mathcal{E}_t\}$ and $\mathcal{N}^{\outin}_i(\mathcal{G}_t)=\{j\in [n]\mid (i,j)\in\mathcal{E}_t\}$ be the sets of in- and out-neighbors, respectively, of agent $i$ at time $t$. A directed path is a sequence of consecutive directed edges. A  directed graph is said to be strongly connected if there is at least one directed path
from any agent to any other agent in the graph. The mixing matrix $W_t\in\mathbb{R}^{n\times n}$ at time $t$ fulfills $[W_t]_{ij}>0$ if $(j,i)\in\mathcal{E}_t$ or $i=j$, and $[W_t]_{ij}=0$ otherwise.

\subsection{Gradient Approximation}
In this section, we introduce one- and two-point sampling gradient estimators.

Let $f:\mathbb{K}\rightarrow\mathbb{R}$ be a function with $\mathbb{K}\subset\mathbb{R}^p$. We assume that $\mathbb{K}$ is convex and bounded, and has a nonempty interior. Specifically, we assume that $\mathbb{K}$ contains the ball of radius $r(\mathbb{K})$ centered at the origin and is contained in the ball of radius $R(\mathbb{K})$, i.e., $r(\mathbb{K})\mathbb{B}^p\subseteq\mathbb{K}\subseteq R(\mathbb{K})\mathbb{B}^p$. The authors of \cite{flaxman2005online} proposed the following gradient estimator,
\begin{align}\label{dbco:gradient:model1}
\hat{\nabla}_1f(x)=\frac{p}{\delta}f(x+\delta u)u,~\forall x\in(1-\xi)\mathbb{K},
\end{align}
where $u\in\mathbb{S}^p$ is a uniformly distributed random vector, $\delta\in(0,r(\mathbb{K})\xi]$ is an exploration parameter, and $\xi\in(0,1)$ is a shrinkage coefficient. The estimator $\hat{\nabla}_1f$ only requires to sample the function at one point, so it is a one-point sampling gradient estimator. Some intuition for this estimator can be found in \cite{flaxman2005online}. Different from \cite{nesterov2017random}, uniform distribution rather than Gaussian distribution is used to generate $u$ in \eqref{dbco:gradient:model1} since the later may generate unbounded $u$.  The estimator  $\hat{\nabla}_1f$ is defined over the set $(1-\xi)\mathbb{K}$ instead of $\mathbb{K}$, since otherwise the perturbations may move points outside $\mathbb{K}$. The feasibility of the perturbations is guaranteed by the following lemma.
\begin{lemma}\label{dbco:lemma:flaxman}
(Observation 2 in \cite{flaxman2005online}) For any $x\in(1-\xi)\mathbb{K}$ and $u\in\mathbb{S}^p$, it holds that $x+\delta u\in\mathbb{K}$ for any $\delta\in(0,r(\mathbb{K})\xi]$.
\end{lemma}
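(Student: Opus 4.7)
The plan is to exhibit $x+\delta u$ as a convex combination of two points that each already lie in $\mathbb{K}$, and then invoke the convexity of $\mathbb{K}$ to conclude. The two ingredients I would use are the defining property of $(1-\xi)\mathbb{K}$, which lets me write $x=(1-\xi)y$ with $y\in\mathbb{K}$, and the inner-ball assumption $r(\mathbb{K})\mathbb{B}^p\subseteq\mathbb{K}$, which will absorb the perturbation direction $\delta u$ once it is rescaled appropriately.

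First I would unpack the hypothesis $x\in(1-\xi)\mathbb{K}$ into the identity $x=(1-\xi)y$ for some $y\in\mathbb{K}$. Next I would look at the rescaled perturbation $\delta u/\xi$ and bound its norm: since $u\in\mathbb{S}^p$ and $\delta\le r(\mathbb{K})\xi$, one has $\|\delta u/\xi\|=\delta/\xi\le r(\mathbb{K})$, so $\delta u/\xi\in r(\mathbb{K})\mathbb{B}^p\subseteq\mathbb{K}$. The key algebraic observation is then the decomposition $x+\delta u=(1-\xi)y+\xi(\delta u/\xi)$, which presents $x+\delta u$ as a convex combination with weights $1-\xi$ and $\xi$ of two members of $\mathbb{K}$. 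Convexity of $\mathbb{K}$ then delivers the conclusion.

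There is no real obstacle here: the statement is essentially a direct consequence of convexity combined with the inner-ball assumption, and the upper bound $r(\mathbb{K})\xi$ on the exploration radius is tuned precisely so that the convex-combination decomposition closes. The only subtle step is the norm estimate showing $\delta u/\xi\in r(\mathbb{K})\mathbb{B}^p$, and this is where the upper bound on $\delta$ is used; this also explains why the admissible range of $\delta$ must shrink linearly with the shrinkage coefficient $\xi$.
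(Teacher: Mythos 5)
Your proof is correct and is precisely the standard argument behind Observation 2 of Flaxman et al., which the paper cites without reproducing: write $x=(1-\xi)y$ with $y\in\mathbb{K}$, note $\delta u/\xi\in r(\mathbb{K})\mathbb{B}^p\subseteq\mathbb{K}$ since $\delta/\xi\le r(\mathbb{K})$, and conclude by convexity from $x+\delta u=(1-\xi)y+\xi(\delta u/\xi)$. Nothing is missing.
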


For our two-point sampling gradient estimator, we use
\begin{align}\label{dbco:gradient:model2}
\hat{\nabla}_2f(x)=\frac{p}{\delta}(f(x+\delta u)-f(x))u,~\forall x\in(1-\xi)\mathbb{K}.
\end{align}
The intuition follows from directional derivatives \cite{duchi2015optimal}. 

Both estimators $\hat{\nabla}_1f$ and $\hat{\nabla}_2f$ are unbiased gradient estimators of $\hat{f}$, where $\hat{f}$ is the uniformly smoothed version of $f$ defined as
\begin{align*}
\hat{f}(x)=\mathbf{E}_{v\in\mathbb{B}^p}[f(x+\delta v)],~\forall x\in(1-\xi)\mathbb{K},
\end{align*}
where the expectation is with respect to uniform distribution.
Some properties of $\hat{f}$, $\hat{\nabla}_1f$, and $\hat{\nabla}_2f$ are presented in the following lemma.
\begin{lemma}\label{dbco:lemma:uniformsmoothing}
\begin{enumerate}[label=(\alph*)]
\item The uniform smoothing $\hat{f}$ is differentiable on $(1-\xi)\mathbb{K}$ even when $f$ is not and for all $x\in(1-\xi)\mathbb{K}$,
\begin{align*}
\nabla \hat{f}(x)=\mathbf{E}_{u\in\mathbb{S}^p}[\hat{\nabla}_1f(x)]=\mathbf{E}_{u\in\mathbb{S}^p}[\hat{\nabla}_2f(x)].
\end{align*}
\item If $f$ is convex on $\mathbb{K}$, then $\hat{f}$ is convex on $(1-\xi)\mathbb{K}$ and
\begin{align*}
f(x)\le \hat{f}(x),~\forall x\in(1-\xi)\mathbb{K}.
\end{align*}
\item If $f$ is Lipschitz-continuous on $\mathbb{K}$ with constant $L_0(f)>0$, then $\hat{f}$ and $\nabla \hat{f}$ are Lipschitz-continuous on $(1-\xi)\mathbb{K}$ with constants $L_0(f)$ and $pL_0(f)/\delta$, respectively. Moreover,
\begin{align*}
|\hat{f}(x)-f(x)|\le\delta L_0(f),~\forall x\in(1-\xi)\mathbb{K}.
\end{align*}
\item If $f$ is bounded on $\mathbb{K}$, i.e., there exists $F_0(f)>0$ such that $|f(x)|\le F_0(f),~\forall x\in\mathbb{K}$, then
\begin{align*}
&|\hat{f}(x)|\le F_0(f),\\
&\|\hat{\nabla}_1f(x)\|\le\frac{pF_0(f)}{\delta},~\forall x\in(1-\xi)\mathbb{K}.
\end{align*}
\item If $f$ is Lipschitz-continuous on $\mathbb{K}$ with constant $L_0(f)>0$, then
\begin{align*}
\|\hat{\nabla}_2f(x)\|\le pL_0(f),~\forall x\in(1-\xi)\mathbb{K}.
\end{align*}
\end{enumerate}
\end{lemma}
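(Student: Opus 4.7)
The plan is to organize the five assertions by difficulty: (a) is the only part that requires a genuinely analytic argument, and (b)--(e) all follow from the defining identity
\[
\hat{f}(x) = \mathbf{E}_{v \in \mathbb{B}^p}[f(x+\delta v)]
\]
together with Jensen's inequality, the triangle inequality, and the fact that $\|u\|=1$ on the sphere.

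For part (a), my strategy is differentiation under the integral followed by the divergence theorem. Writing $\hat{f}(x)$ as the normalized volume integral $\frac{1}{|\delta\mathbb{B}^p|}\int_{\delta\mathbb{B}^p} f(x+v)\,dv$ and invoking Stokes' theorem coordinate by coordinate (replacing $\partial_{x_i}$ by $\partial_{v_i}$ and pushing the derivative onto the boundary), I can convert the resulting volume integral into a surface integral over $\delta\mathbb{S}^p$. The ratio of the surface area of $\delta\mathbb{S}^p$ to the volume of $\delta\mathbb{B}^p$ is exactly $p/\delta$, so
\[
\nabla \hat{f}(x) = \mathbf{E}_{u \in \mathbb{S}^p}\!\left[\tfrac{p}{\delta} f(x+\delta u)\,u\right] = \mathbf{E}_{u \in \mathbb{S}^p}[\hat{\nabla}_1 f(x)].
\]
The two-point version follows for free, since subtracting $f(x)$ inside the estimator contributes $\mathbf{E}_{u \in \mathbb{S}^p}[(p/\delta) f(x) u] = 0$ by symmetry of the uniform distribution on $\mathbb{S}^p$. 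The main obstacle is that $f$ is not assumed differentiable, so Stokes' theorem cannot be applied to $f$ directly; I would sidestep this by first mollifying $f$ into a smooth $f_\varepsilon$, establishing the identity for each $f_\varepsilon$, and passing to the limit using uniform convergence of $f_\varepsilon$ on compact sets.

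Part (b) is then quick. Convexity of $\hat{f}$ follows because it is an expectation (hence convex combination) of the convex functions $x \mapsto f(x+\delta v)$, and the lower bound $f(x) \le \hat{f}(x)$ is Jensen's inequality applied to $f$, using $\mathbf{E}_{v \in \mathbb{B}^p}[v]=0$ by symmetry of the ball. For part (c), the Lipschitz bound on $\hat{f}$ comes from pulling the absolute value inside the expectation and applying the Lipschitz property of $f$; the bound $pL_0(f)/\delta$ on $\nabla\hat{f}$ uses the representation from (a) in the same way, with the extra factor $p/\delta$ coming out front; and the pointwise inequality $|\hat{f}(x)-f(x)| \le \delta L_0(f)$ follows by bounding $|f(x+\delta v)-f(x)| \le L_0(f)\|\delta v\| \le \delta L_0(f)$ inside the expectation.

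Parts (d) and (e) are direct. For (d), the bound on $|\hat{f}|$ is Jensen combined with $|f| \le F_0(f)$, and the bound on $\|\hat{\nabla}_1 f\|$ is obtained by taking norms in the definition \eqref{dbco:gradient:model1} of $\hat{\nabla}_1 f$ and using $\|u\|=1$. For (e), taking norms in \eqref{dbco:gradient:model2} gives $\|\hat{\nabla}_2 f(x)\| \le (p/\delta)|f(x+\delta u)-f(x)| \le (p/\delta) L_0(f)\|\delta u\| = p L_0(f)$. So the entire mathematical content of the lemma is concentrated in the Stokes-theorem step of part (a); everything else is careful bookkeeping.
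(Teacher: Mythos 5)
Your proposal is correct and matches the paper's argument for parts (b)--(e) essentially verbatim: convexity of $\hat{f}$ as an expectation of convex functions, $f\le\hat{f}$ via Jensen (the paper writes this as the gradient inequality plus $\mathbf{E}[v]=0$, which is the same thing), and all the Lipschitz and norm bounds by moving the norm inside the expectation with $\|u\|=1$. The only difference is in part (a), where the paper simply cites Lemma~1 of Flaxman et al.\ for the one-point identity while you sketch its standard proof (differentiation under the integral, Stokes' theorem, and mollification to handle nondifferentiable $f$); since that is precisely how the cited lemma is proved, the mathematical content is the same.
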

\begin{proof}
See Appendix~\ref{dbco:lemma:uniformsmoothinproof}.
\end{proof}

Intuitively, the key idea of gradient-free methods is using the smoothed function $\hat{f}$ to replace the original function $f$ since they are close when $\delta$ is small as shown in (c) of Lemma~\ref{dbco:lemma:uniformsmoothing}. Moreover, the gradient of $\hat{f}$ can be estimated by the gradient estimators $\hat{\nabla}_1f$ or $\hat{\nabla}_2f$ as shown in (a). The main difference between these two gradient estimators is that the norm of $\hat{\nabla}_1f$ is large when $\delta$ is small, while $\hat{\nabla}_2f$ has a bounded norm, as shown in (d) and (e), respectively. This difference leads to improved results for the two-point bandit feedback algorithm, as will be seen in the later sections.

\section{Problem Formulation}\label{dbco:problem}
We consider the problem of distributed bandit online convex optimization with time-varying coupled inequality constraints. This problem can be defined as a repeated game between a group of $n$ learners indexed by $i\in[n]$ and an adversary. At round $t$ of the game, the adversary first arbitrarily chooses $n$ local loss functions $\{f_{i,t}:\mathbb{X}_i\rightarrow \mathbb{R},~i\in[n]\}$ and $n$ local constraint functions $\{g_{i,t}:\mathbb{X}_i\rightarrow \mathbb{R}^m,~i\in[n]\}$, where each $\mathbb{X}_i\subseteq\mathbb{R}^{p_i}$ is a known closed convex set with $p_i$ and $m$ being positive integers. Then, without knowing $\{f_{i,t},~i\in[n]\}$ and $\{g_{i,t},~i\in[n]\}$, all learners simultaneously choose their decisions $\{x_{i,t}\in\mathbb{X}_i,~i\in[n]\}$. Each learner~$i$ samples the values of $f_{i,t}$ and $g_{i,t}$ at the point $x_{i,t}$ as well as other potential points, i.e., the learners receive bandit feedback from the adversary. These values are held privately by each learner. At the same moment, the learners exchange data with their neighbors over a time-varying directed graph $\mathcal{G}_t$. The goal of the learners is to cooperatively choose a global decision sequence $\bsx_T=(x_{1},\dots,x_{T})$, where $T$ is the total number of rounds and $x_t=\col(x_{1,t},\dots,x_{n,t})$, such that the accumulated global loss $\sum_{t=1}^{T}f_t(x_t)$ is competitive with the loss of any comparator sequence $\bsy_T=(y_{1},\dots,y_{T})$ with $y_t=\col(y_{1,t},\dots,y_{n,t})$ (i.e., the regret is as small as possible) and at the same time the constraint violation is as small as possible, where $f_t(x_t)=\sum_{i=1}^nf_{i,t}(x_{i,t})$ is the global loss function.

Specifically, the regret of a global decision sequence $\bsx_T$ with respect to a comparator sequence $\bsy_T$ is defined as
\begin{align*}
\Reg(\bsx_T,\bsy_T)=\sum_{t=1}^Tf_{t}(x_{t})-\sum_{t=1}^Tf_{t}(y_{t}).
\end{align*}
In the literature, there are two commonly used comparator sequences. One is  the optimal dynamic decision sequence in hindsight $\bsy_T=\bsx^*_T=(x^*_{1},\dots,x^*_{T})$ solving the constrained convex optimization problem
\begin{mini}
{x_t\in \mathbb{X}}{\sum_{t=1}^{T}f_t(x_t)}{\label{online:problem1}}{}
\addConstraint{g_t(x_t)\le}{{\bf 0}_m,\quad}{\forall t\in[T],}
\end{mini}
where $\mathbb{X}=\mathbb{X}_1\times\cdots\times \mathbb{X}_n\subseteq\mathbb{R}^{p}$ is the global decision set, $p=\sum_{i=1}^np_i$, and $g_t(x_t)=\sum_{i=1}^ng_{i,t}(x_{i,t})$ is the coupled constraint function.
In order to guarantee that problem (\ref{online:problem1}) is feasible, we assume that for any $T\in\mathbb{N}_+$, the set of all feasible decision sequences $
\mathcal{X}_{T}=\{(x_1,\dots,x_T):~x_t\in \mathbb{X},~g_{t}(x_t)\le{\bf0}_{m},~
t\in[T]\}
$ is non-empty. With this standing assumption, an optimal dynamic decision sequence to (\ref{online:problem1}) always exists. In this case $\Reg(\bsx_T,\bsx^*_T)$ is called the dynamic regret for $\bsx_T$.
Another comparator sequence is $\bsy_T=\check{\bsx}^*_T=(\check{x}^*_T,\dots,\check{x}^*_T)$, where $\check{x}^*_T$ is the optimal static decision in hindsight solving
\begin{mini}
{x\in \mathbb{X}}{\sum_{t=1}^{T}f_t(x)}{\label{online:problem1static}}{}
\addConstraint{g_t(x)\le}{{\bf 0}_m,\quad}{\forall t\in[T].}
\end{mini}
Similar to above, in order to guarantee that problem (\ref{online:problem1static}) is feasible, we assume that for any $T\in\mathbb{N}_+$, the set of all feasible static decision sequences
$\check{\calX}_{T}=\{(x,\dots,x):~x\in \mathbb{X},~g_{t}(x)\le{\bf0}_{m},~t=1,\dots,T\}\subseteq\calX_{T}$ is non-empty.
In this case $\Reg(\bsx_T,\check{\bsx}^*_T)$ is called the static regret. It is straightforward to see that $\Reg(\bsx_T,\bsy_T)\le\Reg(\bsx_T,\bsx^*_T),~\forall \bsy_T\in\calX_{T}$, and that $\Reg(\bsx_T,\check{\bsx}^*_T)\le\Reg(\bsx_T,\bsx^*_T)$.

For a decision sequence $\bsx_T$, the constraint violation is defined as
\begin{align*}
\|[\sum_{t=1}^Tg_{t}(x_{t})]_+\|.
\end{align*}
Note that this definition implicitly allows constraint violations at some times to be compensated by strictly feasible decisions at other times. This is appropriate for constraints that have a cumulative nature such as energy budgets enforced through average power constraints.

The considered problem can be viewed as an extension of the problem studied in \cite{yi2019distributed}, from full information feedback to bandit feedback. As discussed in Section~\ref{dbco:introduction}, two main motivations of considering bandit feedback are that (1) gradient information is not available
in many applications \cite{hazan2016introduction}; and (2) computing a function value is much simpler than computing its gradient \cite{nesterov2017random}. 

We make the following assumptions on the time-varying directed graph $\mathcal{G}_t$ as well as the loss and constraint functions.

\begin{assumption}\label{dbco:assgraph}
For any $t\in\mathbb{N}_+$, the directed graph $\mathcal{G}_t$ satisfies the following conditions:
\begin{enumerate}[label=(\alph*)]
  \item There exists a constant $w\in(0,1)$, such that $[W_t]_{ij}\ge w$ if $[W_t]_{ij}>0$.
  \item The mixing matrix $W_t$ is doubly stochastic, i.e., $\sum_{i=1}^n[W_t]_{ij}=\sum_{j=1}^n[W_t]_{ij}=1,~\forall i,j\in[n]$.
  \item There exists an integer $\iota>0$ such that the directed graph $(\mathcal{V},\cup_{l=0,\dots,\iota-1}\mathcal{E}_{t+l})$ is strongly connected.
\end{enumerate}
\end{assumption}

\begin{assumption}\label{dbco:assfunction:function}
\begin{enumerate}[label=(\alph*)]
  \item For each $i\in[n]$, the set $\mathbb{X}_i$ is convex and closed. Moreover, there exist $r_i>0$ and $R_i>0$ such that
      \begin{align}
      r_i\mathbb{B}^{p_i}\subseteq \mathbb{X}_i\subseteq R_i\mathbb{B}^{p_i}.\label{dbco:domainupper}
      \end{align}
  \item For each $i\in[n]$, $\{f_{i,t}(x)\}$  and $\{[g_{i,t}(x)]_j,~j\in[m]\}$ are convex and uniformly bounded on $\mathbb{X}_i$, i.e., there exist constants $F_{f_i}>0$ and $F_{g_i}>0$ such that for all $t\in\mathbb{N}_+,~j\in[m],~x\in \mathbb{X}_i$,
\begin{align}
|f_{i,t}(x)|\le F_{f_i},~\text{and}~|[g_{i,t}(x)]_j|\le F_{g_i}.\label{dbco:assfunction:ftgtupper}
\end{align}
  \item For each $i\in[n]$, $f_{i,t}$ and $g_{i,t}$ are differentiable. Moreover, $\{\nabla f_{i,t}\}$  and $\{\nabla [g_{i,t}(x)]_j,~j\in[m]\}$ are uniformly bounded on $\mathbb{X}_i$, i.e., there exist constants $G_{f_i}>0$ and $G_{g_i}>0$ such that for all $t\in\mathbb{N}_+,~j\in[m],~x\in \mathbb{X}_i$,
  \begin{align}
\|\nabla f_{i,t}(x)\|\le G_{f_i},~\text{and}~\|\nabla [g_{i,t}(x)]_j\|\le G_{g_i}.\label{dbco:assfunction:subgupper}
\end{align}
\end{enumerate}
\end{assumption}
Assumption~\ref{dbco:assgraph} is a mild assumption and common in the literature of distributed optimization. Assumption~\ref{dbco:assfunction:function} appears often in the literature of bandit online convex optimization.
From Assumption~\ref{dbco:assfunction:function} and Lemma~2.6 in \cite{shalev2012online}, it follows that for all $t\in\mathbb{N}_+,~i\in[n],~j\in[m],~x,~y\in \mathbb{X}_i$,
\begin{subequations}
\begin{align}
&|f_{i,t}(x)-f_{i,t}(y)|\le G_{f_i}\|x-y\|,\label{dbco:assfunction:functionLipf}\\
      &|[g_{i,t}(x)]_j-[g_{i,t}(y)]_j|\le G_{g_i}\|x-y\|,\label{dbco:assfunction:functionLipg}
\end{align}
\end{subequations}
i.e., $\{f_{i,t}(x)\}$  and $\{[g_{i,t}(x)]_j\}$ are Lipschitz-continuous on $\mathbb{X}_i$ with constants $G_{f_i}$ and $G_{g_i}$, respectively.

\subsection{Motivating Example}\label{dbco:example}
As a motivating example, consider a power grid with $n$ power generation units. Each unit $i$ has $p_i$ conventional and renewable power generators. The units can communicate through the information infrastructure. At stage $t$, let $x_{i,t}\in\mathbb{X}_i$ and $\mathbb{X}_i\subset\mathbb{R}^{p_i}$ be the output and the set of feasible outputs of the generators in unit $i$, respectively. To generate the output, each unit $i$ suffers a cost $f_{i,t}(x_{i,t})$. This local cost $f_{i,t}$ is usually described by a quadratic function \cite{abdelaziz2016combined}, but its accurate form is unknown in advance, since fossil fuel price is fluctuating and renewable energy is highly uncertain and unpredictable. Except the local generator limit constraints $\mathbb{X}_i$, all units need to cooperatively take into account global constraints, such as power balance and emission constraints. The global constraints can be modelled as $\sum_{i=1}^ng_{i,t}(x_{i,t})\le{\bf 0}_m$, where  $g_{i,t}$ is unit $i$'s local constraint function. Again, the precise form of the constraint functions is unknown in advance either since that power demands can change from one hour to the next, or that the emission can change  due to the uncertain and unpredictable features of renewable energy. The goal of the units is to reduce the global cost while satisfying the constraints.

\section{One-Point Bandit Feedback}\label{dbco:mainresult1}
In this section, we propose a  distributed bandit online algorithm with a one-point sampling gradient estimator to solve the considered optimization problem. We then derive expected regret and constraint violation bounds for the proposed algorithm.

\subsection{Distributed Bandit Online Algorithm with One-Point Sampling Gradient Estimator}

\begin{algorithm}[tb]
\caption{Distributed Bandit Online Descent with One-Point Sampling Gradient Estimator}%
\label{dbco:algorithm-one}
\begin{algorithmic}[1]
\STATE \textbf{Input:}  non-increasing sequences $\{\alpha_{i,t},~\beta_{i,t},~\gamma_{i,t}\}\subseteq(0,+\infty)$, $\{\xi_{i,t}\}\subseteq(0,1)$, and $\{\delta_{i,t}\}\subseteq(0,r_i\xi_{i,t-1}],~i\in[n]$.
\STATE \textbf{Initialize:} $u_{i,1}\in\mathbb{S}^{p_i}$, $z_{i,1}\in (1-\xi_{i,1})\mathbb{X}_i$, $x_{i,1}=z_{i,1}+\delta_{i,1}u_{i,1}$, and $q_{i,1}={\bf0}_{m},~\forall i\in[n]$.
\FOR{$t=2,\dots,T$}
\FOR{$i\in[n]$ in parallel}
\STATE Select vector $u_{i,t}\in\mathbb{S}^{p_i}$ independently and uniformly at random.
\STATE Sample $f_{i,t-1}(x_{i,t-1})$ and $g_{i,t-1}(x_{i,t-1})$.
\STATE  Update
\begin{subequations}
\begin{align}
       \tilde{q}_{i,t}=&\sum_{j=1}^n[W_{t-1}]_{ij}q_{j,t-1},\label{dbco:algorithm-one:qhat}\\
       z_{i,t}=&\calP_{(1-\xi_{i,t})\mathbb{X}_i}(z_{i,t-1}-\alpha_{i,t}a_{i,t}),\label{dbco:algorithm-one:x}\\
       x_{i,t}=&z_{i,t}+\delta_{i,t}u_{i,t},\label{dbco:algorithm-one:xz}\\
       q_{i,t}=&[(1-\beta_{i,t}\gamma_{i,t})\tilde{q}_{i,t}+\gamma_{i,t}g_{i,t-1}(x_{i,t-1})]_{+}.\label{dbco:algorithm-one:q}
       \end{align}
       \end{subequations}
\STATE  Broadcast $q_{i,t}$ to $\mathcal{N}^{\outin}_i(\mathcal{G}_{t})$ and receive $[W_{t}]_{ij}q_{j,t}$ from $j\in\mathcal{N}^{\inout}_i(\mathcal{G}_{t})$.
\ENDFOR
\ENDFOR
\STATE  \textbf{Output:} $\bsx_{T}$.
\end{algorithmic}
\end{algorithm}

The proposed algorithm is given in pseudo-code as Algorithm~\ref{dbco:algorithm-one}.
In this algorithm, each agent $i$ maintains four local sequences: the local primal decision variable sequence $\{x_{i,t}\}\subseteq \mathbb{X}_i$, the local intermediate decision variable sequence $\{z_{i,t}\}\subseteq (1-\xi_{i,t})\mathbb{X}_i$, the local dual variable sequence $\{q_{i,t}\}\subseteq\mathbb{R}^m_+$, and the estimates of the average of local dual variables $\{\tilde{q}_{i,t}\}\subseteq\mathbb{R}^m_+$. They are updated recursively by the update rules (\ref{dbco:algorithm-one:qhat})--(\ref{dbco:algorithm-one:q}). In (\ref{dbco:algorithm-one:x}), $a_{i,t}$ is the updating direction information for the local intermediate decision variable defined as
\begin{align}
a_{i,t}=\hat{\nabla}_1f_{i,t-1}(z_{i,t-1})+(\hat{\nabla}_1g_{i,t-1}(z_{i,t-1}))^\top \tilde{q}_{i,t}.\label{dbco:algorithm-one:a}
\end{align}

The intuition of the update rules (\ref{dbco:algorithm-one:qhat})--(\ref{dbco:algorithm-one:q}) is as follows. The augmented Lagrangian function associated with the constrained optimization problem with cost function $f$ and constraint function $g$ is
\begin{align}\label{dbco:lagrangian}
\calA(x,\mu)=f(x)+\mu^\top g(x)-\frac{\beta}{2}\|\mu\|^2,
\end{align}
where $\{\mu\in\mathbb{R}^m_+\}$ is the Lagrange multiplier and $\beta>0$ is the regularization parameter. $\calA(x,\mu)$ is a convex-concave function and a standard primal-dual algorithm to find its saddle point is
\begin{subequations}\label{dbco:algorithm-cen}
\begin{align}
x_{k+1}=&\calP_{\mathbb{X}}(x_{k}-\alpha(\nabla f(x_k)+(\nabla g(x_k))^\top \mu_k)),\label{dbco:algorithm-cen-x}\\
\mu_{k+1}=&[\mu_k+\gamma(g(x_{k})-\beta\mu_k)]_+,\label{dbco:algorithm-cen-u}
\end{align}
\end{subequations}
where $\alpha>0$ and $\gamma>0$ are the stepsizes  used in the primal and dual updates, respectively.
The update rules (\ref{dbco:algorithm-one:qhat})--(\ref{dbco:algorithm-one:q}) are the distributed, online, and gradient-free extensions of \eqref{dbco:algorithm-cen-x} and \eqref{dbco:algorithm-cen-u}.

Algorithm~\ref{dbco:algorithm-one} generates random vectors $\tilde{q}_{i,t}$, $z_{i,t}$, $x_{i,t}$, $q_{i,t},~i\in[n],~t\in\mathbb{N}_+$. Let $\mathfrak{U}_t$ denote the $\sigma$-algebra generated by the independent and identically distributed (i.i.d.) random variables $u_{1,t},\dots,u_{n,t}$ and let $\mathcal{U}_t=\bigcup_{s=1}^{t}\mathfrak{U}_s$. It is straightforward to see that $\tilde{q}_{t+1}$, $z_{i,t}$, $x_{i,t-1}$, and $q_{i,t}$ depend on $\mathcal{U}_{t-1}$ and are independent of $\mathfrak{U}_s$ for all $s\ge t$. 

\subsection{Expected Regret and Constraint Violation Bounds}
This section states the main results on the expected regret and constraint violation bounds for Algorithm~\ref{dbco:algorithm-one}.
The following theorem characterizes these bounds based on some specially selected stepsizes, shrinkage coefficients, and exploration parameters under different feedback models and conditions. Then, a corollary is given to characterize the  expected static regret and constraint violation bounds.

\begin{theorem}\label{dbco:corollaryreg}
Suppose Assumptions~\ref{dbco:assgraph}--\ref{dbco:assfunction:function} hold. For any $T\in\mathbb{N}_+$, let $\bsx_T$ be the sequence generated by Algorithm~\ref{dbco:algorithm-one} with
\begin{align}\label{dbco:stepsize1}
&\alpha_{i,t}=\frac{r_i^2}{4mp_i^2F_{g_i}^2t^{\theta_1}},~\beta_{i,t}=\frac{2}{t^{\theta_2}},
~\gamma_{i,t}=\frac{1}{t^{1-\theta_2}},\notag\\
&\xi_{i,t}=\frac{1}{(t+1)^{\theta_3}},~\delta_{i,t}=\frac{r_i}{(t+1)^{\theta_3}},~\forall t\in\mathbb{N}_+,
\end{align} where $\theta_1\in(0,1)$, $\theta_2\in(0,\theta_1/3)$ and $\theta_3\in(\theta_2,(\theta_1-\theta_2)/2]$ are constants. Then, for any comparator sequence $\bsy_T\in\calX_{T}$,
\begin{subequations}
\begin{align}
&\mathbf{E}[\Reg(\bsx_T,\bsy_T)]\le  C_2T^{\max\{\theta_1,1-\theta_1+2\theta_3,1-\theta_3+\theta_2\}}\notag\\
&~~~~~~~~~~~~~+\max_{i\in[n]}\{\frac{8mp_i^2F_{g_i}^2R_i}{r_i^2}\}T^{\theta_1}V(\bsy_T),\label{dbco:corollaryregequ1}\\
&\mathbf{E}[\|[\sum_{t=1}^Tg_{t}(x_{t})]_+\|]
\le \sqrt{C_{3}}T^{1-\theta_2/2},\label{dbco:corollaryconsequ}
\end{align}
\end{subequations}
where $C_1=\sum_{i=1}^n(\frac{mF_gG_{g_i}(2r_i+R_{i})}{1-\theta_3+\theta_2}+\frac{G_{f_i}(2r_i+R_{i})}{1-\theta_3}
+\frac{8mp_i^2F_{g_i}^2R^2_{i}}{r_i^2}+\frac{F_{f_i}^2}{4mF_{g_i}^2(1-\theta_1+2\theta_3)}+\frac{16mp_i^2F_{g_i}^2R_i^2}{r_i^2})+\frac{C_{0}}{\theta_2}$, $C_{2}=C_{2,1}(2\sum_{i=1}^{n}F_{f_i}+C_1)$, $F_g=\max_{i\in[n]}\{F_{g_i}\}$, $C_{0}=\frac{6mn^{2}F_g^2\tau}{1-\lambda}+2mnF_g^2$, $\tau=(1-\frac{w}{2n^2})^{-2}>1$, $\lambda=(1-\frac{w}{2n^2})^{\frac{1}{\iota}}$, $C_{2,1}=2n(1+\max_{i\in[n]}\{\frac{F_{f_i}^2}{F_{g_i}^2(1-\theta_1+2\theta_3)}\}+\frac{1}{1-\theta_2})$, $w$ and $\iota$ are given in Assumption~\ref{dbco:assgraph}, $r_i$, $R_i$, $F_{f_i}$, $F_{g_i}$, $G_{f_i}$, and $G_{g_i}$ are given in Assumption~\ref{dbco:assfunction:function}; and \begin{align*}
V(\bsy_T)=\sum_{t=1}^{T-1}\sum_{i=1}^n\|y_{i,t+1}-y_{i,t}\|
\end{align*}
is the accumulated variation (path-length) of the comparator sequence $\bsy_T$.
\end{theorem}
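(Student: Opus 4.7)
The plan is to reduce this bandit problem to a primal--dual saddle-point analysis by first replacing each local $f_{i,t}$ and $g_{i,t}$ with its uniform smoothing $\hat f_{i,t},\hat g_{i,t}$ from Lemma~\ref{dbco:lemma:uniformsmoothing}. Part (a) of that lemma then makes $\hat\nabla_1 f_{i,t-1}(z_{i,t-1})$ and $\hat\nabla_1[g_{i,t-1}]_j(z_{i,t-1})$ unbiased for $\nabla\hat f_{i,t-1}(z_{i,t-1})$ and $\nabla[\hat g_{i,t-1}]_j(z_{i,t-1})$ conditioned on $\mathcal{U}_{t-2}$; Lemma~\ref{dbco:lemma:flaxman} guarantees $x_{i,t}=z_{i,t}+\delta_{i,t}u_{i,t}\in\mathbb{X}_i$; and part (c) shows that moving between $\hat f,\hat g$ and $f,g$ costs only $\mathcal{O}(\delta_{i,t})$ per round, which sums to a lower-order term under the schedule $\delta_{i,t}\asymp t^{-\theta_3}$.

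Before the main drift argument, I would prove two auxiliary facts. First, an expected consensus bound $\mathbf{E}\|q_{i,t}-\bar q_t\|^2\le\mathcal{O}(\frac{\tau}{1-\lambda}\sum_{s\le t}\gamma_{i,s}^2)$ follows by writing (\ref{dbco:algorithm-one:qhat}) in matrix form and exploiting the geometric contraction of products of doubly-stochastic mixing matrices under Assumption~\ref{dbco:assgraph}, which is what produces the constants $\tau=(1-\frac{w}{2n^2})^{-2}$ and $\lambda=(1-\frac{w}{2n^2})^{1/\iota}$ in the theorem. Second, squaring (\ref{dbco:algorithm-one:q}) with Young's inequality, using $1-\beta_{i,t}\gamma_{i,t}\le 1$ and $\|g_{i,t-1}(x_{i,t-1})\|\le\sqrt{m}F_{g_i}$ from (\ref{dbco:assfunction:ftgtupper}), gives $\mathbf{E}\|q_{i,t}\|^2=\mathcal{O}(t^{2-\theta_2})$, which is the direct source of the exponent $1-\theta_2/2$ appearing in (\ref{dbco:corollaryconsequ}).

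The heart of the argument is a drift-plus-penalty inequality for the Lyapunov function
\begin{equation*}
V_t=\sum_{i=1}^n\frac{\|z_{i,t}-\tilde y_{i,t}\|^2}{2\alpha_{i,t+1}}+\sum_{i=1}^n\frac{\|q_{i,t}-\mu\|^2}{2\gamma_{i,t+1}},
\end{equation*}
where $\tilde y_{i,t}=(1-\xi_{i,t})y_{i,t}$ is an admissible shrinking of the comparator (with $\|y_{i,t}-\tilde y_{i,t}\|\le\xi_{i,t}R_i$ from (\ref{dbco:domainupper})) and $\mu\in\mathbb{R}^m_+$ is a free dual argument. Expanding $\|z_{i,t+1}-\tilde y_{i,t+1}\|^2$ by non-expansiveness of $\calP_{(1-\xi_{i,t+1})\mathbb{X}_i}$ applied to (\ref{dbco:algorithm-one:x}) produces a linear term $-\alpha_{i,t+1}a_{i,t+1}^\top(z_{i,t}-\tilde y_{i,t})$, a quadratic residual $\alpha_{i,t+1}^2\|a_{i,t+1}\|^2=\mathcal{O}(\alpha_{i,t+1}^2/\delta_{i,t}^2)$ by (d), and a comparator-drift contribution absorbable into $T^{\theta_1}V(\bsy_T)$; the symmetric expansion of $\|q_{i,t+1}-\mu\|^2$ via non-expansiveness of $[\cdot]_+$ applied to (\ref{dbco:algorithm-one:q}) yields the dual linear term together with $\mathcal{O}(\gamma_{i,t+1}^2)$ and the regularization $\beta_{i,t+1}\|\mu\|^2$. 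Taking conditional expectation, using (a) to turn $\hat\nabla_1$ into $\nabla\hat{\cdot}$, and invoking convexity of $\hat f_{i,t}$ and of $[\hat g_{i,t}]_j$ via (b), converts the linear terms into the saddle-point expression $\hat f_{t-1}(z_{t-1})-\hat f_{t-1}(\tilde y_{t-1})+\mu^\top\hat g_{t-1}(z_{t-1})$. Summing over $t$, the initial mass $V_1$ grows like $T^{\theta_1}$, the primal residual sums to $T^{1-\theta_1+2\theta_3}$, and the consensus-plus-bias residuals sum to $T^{1-\theta_3+\theta_2}$, matching the three competing exponents in the max in (\ref{dbco:corollaryregequ1}). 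Setting $\mu=\mathbf{0}_m$ gives the regret bound; choosing $\mu=c\,[\sum_{t=1}^Tg_t(x_t)]_+/\|[\sum_{t=1}^Tg_t(x_t)]_+\|$ for a suitable $c$ and rearranging (bounding the regret side by the trivial $\mathcal{O}(T)$) isolates $\mathbf{E}\|[\sum_tg_t(x_t)]_+\|^2\le C_3T^{2-\theta_2}$, and (\ref{dbco:corollaryconsequ}) follows by Jensen.

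The main obstacle is the primal drift step. Because (d) only gives $\|\hat\nabla_1 f_{i,t}\|\le p_iF_{f_i}/\delta_{i,t}$, every quadratic residual in the gradient estimate is amplified by $\delta^{-2}$, whereas the smoothing bias is of order $\delta$. Balancing $\alpha_{i,t+1}/\delta_{i,t}^2\asymp t^{-\theta_1+2\theta_3}$ (forcing $\theta_1>2\theta_3$ so the primal residual is summable), the bias-plus-consensus scaling $t^{-\theta_3+\theta_2}$ (forcing $\theta_3>\theta_2$), and the $\|\mu\|^2$-weighted dual residual, all inside a single coupled drift inequality, is what yields the constrained range $\theta_2\in(0,\theta_1/3)$, $\theta_3\in(\theta_2,(\theta_1-\theta_2)/2]$ in the statement; the same step with the two-point estimator of Section~\ref{dbco:mainresult2} would replace $pF_0/\delta$ by $pL_0$ in (e) and remove the $\delta^{-2}$ penalty, which is exactly why the exponents here are worse than in the two-point setting.
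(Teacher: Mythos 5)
Your overall architecture coincides with the paper's: uniform smoothing plus the unbiased one-point estimator, a combined primal--dual drift built from the weighted distances $\|z_{i,t}-\tilde y_{i,t}\|^2/\alpha_{i,t+1}$ and $\|q_{i,t}-\mu\|^2/\gamma_{i,t+1}$, the free multiplier set to $\mu={\bf 0}_m$ for the regret and to a scaled $[\sum_{t=1}^Tg_t(x_t)]_+$ for the violation, and Jensen at the end --- this is precisely the content of Lemmas~\ref{dbco:lemma_virtualbound}--\ref{dbco:theoremreg}.

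There is, however, one step that fails as literally stated. You bound the consensus error by $\mathbf{E}\|q_{i,t}-\bar q_t\|^2\le\mathcal{O}\bigl(\tfrac{\tau}{1-\lambda}\sum_{s\le t}\gamma_{i,s}^2\bigr)$. With $\gamma_{i,s}=s^{-(1-\theta_2)}$ and $\theta_2<\theta_1/3<1/3$, the series $\sum_s\gamma_{i,s}^2$ converges, so this gives a per-round disagreement bound that is $\mathcal{O}(1)$ and does not decay. The disagreement enters the per-round regret bound \emph{undamped} --- through the term $[\tilde q_{i,t+1}-\bar q_t]^\top[\hat g_{i,t}(\check y_{i,t})-\hat g_{i,t}(z_{i,t})]$ in \eqref{dbco:lemma_regretdelta:equ9}, which carries no factor of $\gamma_t$ --- so an $\mathcal{O}(1)$ per-round bound accumulates to $\Theta(T)$ and destroys sublinearity of the regret. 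What the mixing-matrix contraction actually delivers, and what the paper uses in \eqref{dbco:lemma_qbarequ}, is the geometric convolution $\|\tilde q_{i,t+1}-\bar q_t\|\le 2\sqrt{m}nF_g\tau\sum_{s=1}^{t-1}\gamma_{s+1}\lambda^{t-1-s}=\mathcal{O}(\gamma_t/(1-\lambda))$, whose sum over $t$ is $\mathcal{O}(T^{\theta_2})$ and is absorbed into $C_0\sum_t\gamma_{t+1}$; you need the convolution form, not $\sum_s\gamma_s^2$. Your second auxiliary fact is also off, though less consequentially: the induction on \eqref{dbco:algorithm-one:q} gives the deterministic bound $\|q_{i,t}\|\le\sqrt{m}F_g/\beta_t=\mathcal{O}(t^{\theta_2})$, hence $\|q_{i,t}\|^2=\mathcal{O}(t^{2\theta_2})$ rather than $\mathcal{O}(t^{2-\theta_2})$, and the exponent $1-\theta_2/2$ in \eqref{dbco:corollaryconsequ} does not originate from $\mathbf{E}\|q_{i,t}\|^2$ but from $\sqrt{d_5(T)\cdot\mathcal{O}(T)}$ with $d_5(T)=\mathcal{O}(T^{1-\theta_2})$ coming from the accumulated regularization $\sum_{t=1}^T\beta_{t+1}$ multiplying $\|\mu\|^2$; since your third paragraph derives the violation by exactly that route, this slip is not load-bearing, whereas the consensus bound is.
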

\begin{proof}
See Appendix~\ref{dbco:corollaryregproof}.
\end{proof}
\begin{remark}\label{dbco:remarksubreg}
From \eqref{dbco:corollaryconsequ}, we see that Algorithm~\ref{dbco:algorithm-one} achieves sublinear expected constraint violation. Algorithm~\ref{dbco:algorithm-one} can also achieve
sublinear expected dynamic regret if $V(\bsx^*_T)$ grows sublinearly. In this case, there exists a constant $\nu\in[0,1)$, such that $V(\bsx^*_T)=\mathcal{O}(T^\nu)$, then setting $\bsy_T=\bsx^*_T$  and $\theta_1\in(0,1-\nu)$ in Theorem~\ref{dbco:corollaryreg}  gives $\mathbf{E}[\Reg(\bsx_T,\bsx^*_T)]=\mathbf{o}(T)$.
\end{remark}

\begin{remark}\label{dbco:remarksub}
To the best of our knowledge, Algorithm~\ref{dbco:algorithm-one} is the first algorithm to solve online convex optimization with time-varying inequality constraints in the one-point bandit feedback setting. An advantage of Algorithm~\ref{dbco:algorithm-one} is that the information about the total number of rounds is not used, which is an improvement compared to the one-point sampling algorithms in \cite{flaxman2005online,abernethy2009competing,abernethy2012interior,saha2011improved,hazan2014bandit,chen2018bandit,yuan2016online}. Note that these papers did not consider bandit feedback for time-varying inequality constraints or did not even consider time-varying inequality constraints at all. The potential drawback of Algorithm~\ref{dbco:algorithm-one} is that in order to use the sequences defined in \eqref{dbco:stepsize1}, each learner needs to know a uniform upper bound of her time-varying constraint function.
\end{remark}

Setting $\bsy_T=\check{\bsx}^*_T$ in Theorem~\ref{dbco:corollaryreg} gives the following results.

\begin{corollary}\label{dbco:corollaryreg-static}
Under the same conditions as in Theorem~\ref{dbco:corollaryreg} with $\theta_1\in(3/4,5/6]$, $\theta_2=2\theta_1-3/2$, and $\theta_3=\theta_1-1/2$, it holds that
\begin{subequations}
\begin{align}
&\mathbf{E}[\Reg(\bsx_T,\check{\bsx}^*_T)]\le  C_1T^{\theta_1},\label{dbco:corollaryregequ1-static}\\
&\mathbf{E}[\|[\sum_{t=1}^Tg_{t}(x_{t})]_+\|]
\le \sqrt{C_{2}}T^{7/4-\theta_1}.\label{dbco:corollaryconsequ-static}
\end{align}
\end{subequations}
\end{corollary}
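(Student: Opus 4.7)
The plan is to derive Corollary~\ref{dbco:corollaryreg-static} as a direct specialization of Theorem~\ref{dbco:corollaryreg}, by (i) choosing the static comparator $\bsy_T = \check{\bsx}^*_T$, which kills the path-length term, and (ii) tuning $\theta_2,\theta_3$ so that the three exponents inside the $\max\{\cdot\}$ in \eqref{dbco:corollaryregequ1} collapse to a single value $\theta_1$. No new inequalities are needed; the argument is a parameter-matching exercise.

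First, I would note that for the static comparator $\check{\bsx}^*_T=(\check{x}^*_T,\dots,\check{x}^*_T)$, every summand of
\[
V(\check{\bsx}^*_T)=\sum_{t=1}^{T-1}\sum_{i=1}^n\|\check{x}^*_{T,i}-\check{x}^*_{T,i}\|
\]
vanishes, so $V(\check{\bsx}^*_T)=0$ and the second term on the right-hand side of \eqref{dbco:corollaryregequ1} disappears. Consequently, the static regret is bounded by $C_2\,T^{\max\{\theta_1,\,1-\theta_1+2\theta_3,\,1-\theta_3+\theta_2\}}$, and it remains to choose $\theta_2,\theta_3$ that minimize this exponent while still satisfying the admissibility constraints of Theorem~\ref{dbco:corollaryreg}.

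Next I would substitute $\theta_3=\theta_1-1/2$ and $\theta_2=2\theta_1-3/2$ and verify algebraically that
\[
1-\theta_1+2\theta_3 \;=\; 1-\theta_1+2\theta_1-1 \;=\; \theta_1,
\qquad
1-\theta_3+\theta_2 \;=\; 1-(\theta_1-\tfrac12)+(2\theta_1-\tfrac32) \;=\; \theta_1,
\]
so that all three arguments of the $\max$ coincide and the regret bound reduces to $C_1 T^{\theta_1}$ (with $C_1$ coming from the $C_2$ of Theorem~\ref{dbco:corollaryreg} after renaming to match the corollary). For the constraint violation, the same choice gives $1-\theta_2/2 = 1-(2\theta_1-3/2)/2 = 7/4-\theta_1$, which together with \eqref{dbco:corollaryconsequ} yields $\sqrt{C_{2}}\,T^{7/4-\theta_1}$.

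Finally, I would check that the chosen parameters lie in the admissible ranges of Theorem~\ref{dbco:corollaryreg}, namely $\theta_1\in(0,1)$, $\theta_2\in(0,\theta_1/3)$, and $\theta_3\in(\theta_2,(\theta_1-\theta_2)/2]$. The condition $\theta_2>0$ forces $\theta_1>3/4$; the condition $\theta_3\le(\theta_1-\theta_2)/2=3/4-\theta_1/2$ combined with $\theta_3=\theta_1-1/2$ forces $\theta_1\le 5/6$; and the remaining inequalities $\theta_2<\theta_1/3$ and $\theta_3>\theta_2$ (equivalent to $\theta_1<1$) are automatic on $(3/4,5/6]$. This pins down exactly the interval $\theta_1\in(3/4,5/6]$ stated in the corollary. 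The only mildly subtle point is precisely this parameter-feasibility bookkeeping; once it is in place, the proof is a one-line substitution into Theorem~\ref{dbco:corollaryreg}.
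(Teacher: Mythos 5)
Your proof is correct and is exactly the route the paper takes: the paper simply states that the corollary follows by setting $\bsy_T=\check{\bsx}^*_T$ in Theorem~\ref{dbco:corollaryreg}, and your verification that $V(\check{\bsx}^*_T)=0$, that the three exponents collapse to $\theta_1$, and that the feasibility constraints on $\theta_2,\theta_3$ pin down $\theta_1\in(3/4,5/6]$ supplies precisely the omitted bookkeeping. Your remark about the constants is also apt --- the corollary's $C_1$ and $\sqrt{C_2}$ correspond to the theorem's regret and constraint-violation constants under the paper's (slightly inconsistent) labeling.
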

\begin{remark}
The parameter $\theta_1$ in Corollary~\ref{dbco:corollaryreg-static} is a user-defined parameter which enables the trade-off between the expected static regret bound and the expected constraint violation bound.
Same as in \cite{flaxman2005online}, if there is no inequality constraints, i.e., $g_{i,t}\equiv{\bf 0}_m,~\forall i\in[n],~\forall t\in\mathbb{N}_+$, then by setting $\alpha_{i,t}=\frac{1}{t^{3/4}},~\beta_{i,t}=\gamma_{i,t}=0,~\xi_{i,t}=\frac{1}{(t+1)^{1/4}},~\delta_{i,t}=\frac{r_i}{(t+1)^{1/4}}$ in \eqref{dbco:stepsize1}, we have that \eqref{dbco:corollaryregequ1-static} can be replaced by $\mathbf{E}[\Reg(\bsx_T,\check{\bsx}^*_T)]\le  \hat{C}_1T^{3/4}$, where $\hat{C}_1=\sum_{i=1}^{n}(\frac{4G_{f_i}(2r_i+R_{i})}{3}
+6R^2_{i}+\frac{4p_i^2F_{f_i}^2}{3r_i^2})$. Hence,  Algorithm~\ref{dbco:algorithm-one} achieves the same expected static regret bound as the bandit algorithm in \cite{flaxman2005online}. However,  in \cite{flaxman2005online} the total number of rounds, the Lipschitz constant, and upper bound of the loss functions need to be known in advance to design the algorithm.
\end{remark}

\section{Two-Point Bandit Feedback}\label{dbco:mainresult2}
In this section, we consider the two-point bandit feedback algorithm.

\subsection{Distributed Bandit Online Algorithm with Two-Point Sampling Gradient Estimator}

\begin{algorithm}[tb]
\caption{Distributed Bandit Online Descent with Two-Point Sampling Gradient Estimator}%
\label{dbco:algorithm-two}
\begin{algorithmic}[1]
\STATE \textbf{Input:} non-increasing sequences $\{\alpha_{i,t},~\beta_{i,t},~\gamma_{i,t}\}\subseteq(0,+\infty)$, $\{\xi_{i,t}\}\subseteq(0,1)$, and $\{\delta_{i,t}\}\subseteq(0,r_i\xi_{i,t-1}],~i\in[n]$.
\STATE \textbf{Initialize:} $x_{i,1}\in (1-\xi_{i,1})\mathbb{X}_i$ and $q_{i,1}={\bf0}_{m},~\forall i\in[n]$.
\FOR{$t=2,\dots,T$}
\FOR{$i\in[n]$ in parallel}
\STATE Select vector $u_{i,t-1}\in\mathbb{S}^{p_i}$ independently and uniformly at random.
\STATE Sample $f_{i,t-1}(x_{i,t-1}+\delta_{i,t-1}u_{i,t-1})$, $f_{i,t-1}(x_{i,t-1})$, $g_{i,t-1}(x_{i,t-1}+\delta_{i,t-1}u_{i,t-1})$ and $g_{i,t-1}(x_{i,t-1})$.
\STATE  Update
\begin{subequations}
\begin{align}
       \tilde{q}_{i,t}=&\sum_{j=1}^n[W_{t-1}]_{ij}q_{j,t-1},\label{dbco:algorithm-two:qhat}\\
       x_{i,t}=&\calP_{(1-\xi_{i,t})\mathbb{X}_i}(x_{i,t-1}-\alpha_{i,t}b_{i,t}),\label{dbco:algorithm-two:x}\\
       q_{i,t}=&[(1-\gamma_{i,t}\beta_{i,t})\tilde{q}_{i,t}+\gamma_{i,t}c_{i,t}]_{+}.\label{dbco:algorithm-two:q}
       \end{align}
       \end{subequations}
\STATE  Broadcast $q_{i,t}$ to $\mathcal{N}^{\outin}_i(\mathcal{G}_{t})$ and receive $[W_{t}]_{ij}q_{j,t}$ from $j\in\mathcal{N}^{\inout}_i(\mathcal{G}_{t})$.
\ENDFOR
\ENDFOR
\STATE  \textbf{Output:} $\bsx_{T}$.
\end{algorithmic}
\end{algorithm}

With two-point bandit feedback at each round each learner can sample the values of her local loss and constraint at two points. This gives the freedom to design a more efficient algorithm which at the same time avoids the potential drawback of Algorithm~\ref{dbco:algorithm-one} as stated in Remark~\ref{dbco:remarksub} on knowing the upper bounds of the time-varying constraint functions.
The proposed algorithm is given in pseudo-code as Algorithm~\ref{dbco:algorithm-two}.
In (\ref{dbco:algorithm-two:x}), $b_{i,t}$ is the updating direction information for the local primal decision variable defined as
\begin{align}
b_{i,t}=\hat{\nabla}_2f_{i,t-1}(x_{i,t-1})+(\hat{\nabla}_2g_{i,t-1}(x_{i,t-1}))^\top \tilde{q}_{i,t}.\label{dbco:algorithm-two:a}
\end{align}
Similarly, in (\ref{dbco:algorithm-two:q}), $c_{i,t}$ is the updating direction information for the local dual variable defined as
\begin{align}
c_{i,t}=\hat{\nabla}_2 g_{i,t-1}(x_{i,t-1})(x_{i,t}-x_{i,t-1})+g_{i,t-1}(x_{i,t-1}).\label{dbco:algorithm-two:b}
\end{align}

In addition to that Algorithm~\ref{dbco:algorithm-two} uses a two-point sampling gradient estimator, another difference between Algorithms~\ref{dbco:algorithm-one} and \ref{dbco:algorithm-two} is that when updating the local dual variable, in Algorithm~\ref{dbco:algorithm-two}, $c_{i,t}$ is used to replace $g_{i,t-1}(x_{i,t-1})$. This modification is inspired by the algorithms proposed in \cite{neely2017online,yi2019distributed} and helps to avoid using other information, such as the upper bound of the loss and constraint functions.


\subsection{Expected Regret and Constraint Violation Bounds}
This section states the main results on the expected regret and constraint violation bounds for Algorithm~\ref{dbco:algorithm-two}.

\begin{theorem}\label{dbco:corollaryreg-two}
Suppose Assumptions~\ref{dbco:assgraph}--\ref{dbco:assfunction:function} hold. For any $T\in\mathbb{N}_+$, let $\bsx_T$ be the sequence generated by Algorithm~\ref{dbco:algorithm-two} with
\begin{align}\label{dbco:stepsize1-two}
&\alpha_t=\frac{1}{t^{\kappa}},~\beta_t=\frac{1}{t^{\kappa}},
~\gamma_t=\frac{1}{t^{1-\kappa}},\notag\\
&\xi_{i,t}=\frac{1}{t+1},~\delta_{i,t}=\frac{r_i}{t+1},~\forall t\in\mathbb{N}_+,
\end{align}
where $\kappa\in(0,1)$ is a constant. Then, for any comparator sequence $\bsy_T\in\calX_{T}$,
\begin{subequations}
\begin{align}
&\mathbf{E}[\Reg(\bsx_T,\bsy_T)]\le  C_3T^{\max\{\kappa,1-\kappa\}}+2R_{\max}T^{\kappa}V(\bsy_T),\label{dbco:corollaryregequ1-two}\\
&\mathbf{E}[\|[\sum_{t=1}^Tg_{t}(x_{t})]_+\|]\le \sqrt{C_{4}}T^{1-\kappa/2},\label{dbco:corollaryconsequ-two}
\end{align}
\end{subequations}
where $C_3=\sum_{i=1}^{n}(2G_{f_i}(r_i+R_{i})+8R^2_{i}+\frac{2\sqrt{m}B_1G_{g_i}R_{i}}{\kappa}+\frac{p_i^2G^2_{f_i}}{1-\kappa})+\frac{\hat{C}_{0}}{\kappa}$, $C_{4}=C_{4,1}(2\sum_{i=1}^nF_{f_i}+C_{3})$, $C_{4,1}=\sum_{i=1}^n2(\frac{2mp_i^2G_{g_i}^2+1}{1-\kappa}+1)$, $\hat{C}_{0}=\frac{6n^{2}\sqrt{m}\tau B_1F_g}{1-\lambda}+2nB_1^2$, $B_1=\sqrt{m}F_g+\sqrt{m}pG_gR_{\max}$, and $R_{\max}=\max_{i\in[n]}\{R_i\}$.
\end{theorem}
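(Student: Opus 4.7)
The plan is to follow the primal-dual scheme used for Theorem~\ref{dbco:corollaryreg}, exploiting the crucial improvement afforded by the two-point estimator: by Lemma~\ref{dbco:lemma:uniformsmoothing}(e) the norm $\|\hat{\nabla}_2 f_{i,t-1}(x_{i,t-1})\|$ is bounded by $p_i G_{f_i}$ independently of $\delta_{i,t}$, instead of blowing up like $p_i F_{f_i}/\delta_{i,t}$. This is exactly what allows the larger primal stepsize $\alpha_t=t^{-\kappa}$ and drives the sharpened $\mathcal{O}(T^{\max\{\kappa,1-\kappa\}})$ regret. A second consequence is that the direction $c_{i,t}$ produced by two-point sampling has bounded norm $\|c_{i,t}\|\le B_1$ (using $\|x_{i,t}-x_{i,t-1}\|\le 2R_i$), so the dual stepsizes in \eqref{dbco:stepsize1-two} need not know any uniform size $F_{g_i}$ of $g_{i,t}$, in contrast to Algorithm~\ref{dbco:algorithm-one}.

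First I would introduce the smoothed surrogates $\hat{f}_{i,t-1}$ and $\hat{g}_{i,t-1}$ and note, using Lemma~\ref{dbco:lemma:uniformsmoothing}(a), that $b_{i,t}$ and $\hat{\nabla}_2 g_{i,t-1}(x_{i,t-1})$ are unbiased estimators of the corresponding smoothed gradients conditional on $\calU_{t-2}$. Lemma~\ref{dbco:lemma:uniformsmoothing}(c) absorbs the smoothing error at cost $\delta_{i,t-1}G_{f_i}=\mathcal{O}(1/t)$ per round, whose sum is $\mathcal{O}(\log T)=\mathbf{o}(T^{\max\{\kappa,1-\kappa\}})$. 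For any $y_{i,t}\in\mathbb{X}_i$ I replace it by the shrunken version $\hat{y}_{i,t}=(1-\xi_{i,t})y_{i,t}\in(1-\xi_{i,t})\mathbb{X}_i$, admissible because $0\in\mathbb{X}_i$ by \eqref{dbco:domainupper}, introducing only an $\mathcal{O}(\xi_{i,t}R_i)=\mathcal{O}(1/t)$ perturbation.

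Next I would derive two Lyapunov-type drift inequalities. The primal one uses non-expansiveness of the projection onto $(1-\xi_{i,t})\mathbb{X}_i$,
\[
\|x_{i,t}-\hat{y}_{i,t}\|^2\le\|x_{i,t-1}-\hat{y}_{i,t}\|^2-2\alpha_t\langle b_{i,t},x_{i,t-1}-\hat{y}_{i,t}\rangle+\alpha_t^2\|b_{i,t}\|^2,
\]
with $\|b_{i,t}\|^2\le 2p_i^2G_{f_i}^2+2mp_i^2G_{g_i}^2\|\tilde{q}_{i,t}\|^2$ from Lemma~\ref{dbco:lemma:uniformsmoothing}(e); convexity of $\hat{f}_{i,t-1}$ and of each coordinate of $\hat{g}_{i,t-1}$ turns the inner product into a Lagrangian gap after conditioning on $\calU_{t-2}$. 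The dual one expands $\|q_{i,t}-\mu\|^2\le\|(1-\gamma_t\beta_t)\tilde{q}_{i,t}+\gamma_t c_{i,t}-\mu\|^2$ for arbitrary $\mu\in\Real^m_+$; the key observation is that coordinate-wise convexity of $\hat{g}_{i,t-1}$ yields $\mathbf{E}[c_{i,t}\mid\calU_{t-2}]\le\hat{g}_{i,t-1}(x_{i,t})$, so the dual chases $\hat{g}_{i,t-1}(x_{i,t})$ rather than $\hat{g}_{i,t-1}(x_{i,t-1})$ without requiring a size bound on $g_{i,t-1}$. Pairing the cross term $\langle\tilde{q}_{i,t},\hat{g}_{i,t-1}(x_{i,t-1})\rangle$ in the two drifts, and bounding the consensus residual $\sum_i\|q_{i,t}-\bar{q}_t\|$ via the geometric contraction of Assumption~\ref{dbco:assgraph} at rate $\lambda$ (this is where $\hat{C}_0/\kappa$, $\tau$, and $\lambda$ enter), produces a combined inequality.

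Finally, plugging the stepsizes \eqref{dbco:stepsize1-two} in, I would use $\sum_{t=1}^T\alpha_t=\mathcal{O}(T^{1-\kappa})$, $\sum_{t=1}^T\gamma_t=\mathcal{O}(T^\kappa)$, $\sum_{t=1}^T\alpha_t^2=\mathcal{O}(T^{\max\{1-2\kappa,0\}})$, and $\sum_{t=1}^T\gamma_t^2=\mathcal{O}(T^{\max\{2\kappa-1,0\}})$, so every contribution is at most $T^{\max\{\kappa,1-\kappa\}}$; the telescope of $\|x_{i,t-1}-\hat{y}_{i,t}\|^2-\|x_{i,t}-\hat{y}_{i,t+1}\|^2$ contributes the $2R_{\max}T^\kappa V(\bsy_T)$ path-length term, establishing \eqref{dbco:corollaryregequ1-two}. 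For \eqref{dbco:corollaryconsequ-two} I specialize to $\bsy_T\in\calX_T$ with $\mu=\mathbf{0}_m$: the primal cost drops out and the dual drift telescopes to a bound on $\mathbf{E}[\|\bar{q}_{T+1}\|^2]=\mathcal{O}(T^{\max\{\kappa,1-\kappa\}})$; applying the standard inequality $\|[\sum_{t=1}^T g_t(x_t)]_+\|\le\|\bar{q}_{T+1}\|/\gamma_T$ up to lower-order terms, together with Jensen's inequality, then produces $\sqrt{C_4}\,T^{1-\kappa/2}$. The hardest step will be carefully folding the $\alpha_t^2\|\tilde{q}_{i,t}\|^2$ feedback from the primal inequality into the dual drift so that the three stepsize products $\alpha_t\gamma_t$, $\gamma_t\beta_t$, and $\alpha_t^2$ balance to yield the single exponent $\max\{\kappa,1-\kappa\}$ without an extra cross term; this is precisely what dictates the coupled choice $\alpha_t=\beta_t=t^{-\kappa}$ in \eqref{dbco:stepsize1-two}.
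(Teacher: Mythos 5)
Your overall architecture---smoothed surrogates $\hat{f}_{i,t},\hat{g}_{i,t}$, a primal drift from non-expansiveness of $\calP_{(1-\xi_{i,t})\mathbb{X}_i}$, a dual drift from expanding $\|q_{i,t}-q\|^2$, consensus contraction at rate $\lambda$ contributing $\hat{C}_0/\kappa$, and the observations $\|\hat{\nabla}_2f_{i,t}\|\le p_iG_{f_i}$ and $\|c_{i,t}\|\le B_1$ as the reason the larger stepsize $\alpha_t=t^{-\kappa}$ is admissible and $F_{g_i}$ disappears from the algorithm---matches the paper's proof via Lemmas~\ref{dbco:lemma_virtualbound-two}--\ref{dbco:theoremreg-two}. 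However, two of your steps would fail as written. First, the claim $\mathbf{E}[c_{i,t}\mid\calU_{t-2}]\le\hat{g}_{i,t-1}(x_{i,t})$ is unjustified: $x_{i,t}$ is computed from $b_{i,t}$, which is built from the \emph{same} random direction $u_{i,t-1}$ as $\hat{\nabla}_2g_{i,t-1}(x_{i,t-1})$, so the expectation of the product $\hat{\nabla}_2g_{i,t-1}(x_{i,t-1})(x_{i,t}-x_{i,t-1})$ does not factor, and you cannot replace the estimator by $\nabla\hat{g}_{i,t-1}$ and then invoke convexity. The paper never takes this expectation; it carries the term exactly as $d_7(t)$ in \eqref{dbco:gvirtualnorm-two} and cancels it against the identical term $-\mathbf{E}_{\mathfrak{U}_t}[d_7(t)]$ produced by the primal inequality \eqref{dbco:lemma_regretdeltaequ-two}. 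That exact cancellation---not the Lagrangian pairing $\langle\tilde{q}_{i,t},\hat{g}_{i,t-1}(x_{i,t-1})\rangle$ you name---is the cross term that must be matched between the two drifts.

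Second, your constraint-violation endgame does not reach the stated rate. With your own intermediate bound $\mathbf{E}[\|\bar{q}_{T+1}\|^2]=\mathcal{O}(T^{\max\{\kappa,1-\kappa\}})$, the estimate $\|[\sum_{t=1}^Tg_t(x_t)]_+\|\le\|\bar{q}_{T+1}\|/\gamma_T$ gives $\mathcal{O}(T^{1-\kappa+\frac{1}{2}\max\{\kappa,1-\kappa\}})$, which strictly exceeds $T^{1-\kappa/2}$ for every $\kappa<1/2$ and is even superlinear for $\kappa<1/3$. (The telescoped dual drift with $q=\mathbf{0}_m$ actually yields the sharper $\mathbf{E}[\|q_{i,T+1}\|^2]=\mathcal{O}(\gamma_{T+1}T)=\mathcal{O}(T^{\kappa})$, which would repair the exponent arithmetic, but only at the terminal index $T+1$.) Moreover the corrections you dismiss as lower order---the regularization $\gamma_{t+1}\beta_{t+1}\tilde{q}_{i,t+1}$ and the linearization gap between $c_{i,t}$ and $g_{i,t-1}(x_{i,t-1})$---are each $\mathcal{O}(1)$ per round under the only available deterministic bound $\|\tilde{q}_{i,t+1}\|\le B_1/\beta_t$, hence $\mathcal{O}(T)$ in total, and taming them needs per-round second-moment bounds on $q_{i,t}$ that your argument does not supply. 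The paper avoids all of this by substituting the explicit maximizer $\hat{q}^*=2[\sum_{t=1}^Tg_t(x_t)]_+/d_{10}(T)$ of $q\mapsto(\sum_{t=1}^Tg_t(x_t))^\top q-\tfrac{d_{10}(T)}{4}\|q\|^2$ into the master inequality \eqref{dbco:theoremconsequ-two}: the quantity $\|[\sum_{t=1}^Tg_t(x_t)]_+\|^2/d_{10}(T)$ then appears directly on the left, the $\|q\|^2$ penalties are absorbed into $d_{10}(T)=\mathcal{O}(T^{1-\kappa})$, the coefficients of $\mathbf{E}[\|q_{i,t}-\hat{q}^*\|^2]$ are nonpositive by the stepsize design, and Jensen's inequality together with the crude bound $|\Reg(\bsx_T,\bsy_T)|\le2T\sum_{i=1}^nF_{f_i}$ delivers $\sqrt{C_4}\,T^{1-\kappa/2}$. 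You should adopt that substitution in place of the queue-drift shortcut.
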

\begin{proof}
See Appendix~\ref{dbco:corollaryreg-twoproof}.
\end{proof}

\begin{remark}\label{dbco:remarksub-two}
The bounds obtained in \eqref{dbco:corollaryregequ1-two} and \eqref{dbco:corollaryconsequ-two} are the same as the bounds achieved in \cite{yi2019distributed} under the same assumptions, although \cite{yi2019distributed} considered a full-information feedback setting. In other words, in an average sense, Algorithm~\ref{dbco:algorithm-two}, which only uses two-point bandit feedback, is as efficient as the algorithm proposed in \cite{yi2019distributed}, which uses full-information feedback.
By comparing \eqref{dbco:stepsize1}, \eqref{dbco:corollaryregequ1}, and \eqref{dbco:corollaryconsequ} with \eqref{dbco:stepsize1-two}, \eqref{dbco:corollaryregequ1-two}, and \eqref{dbco:corollaryconsequ-two}, respectively, we see that if a two-point sampling gradient estimator is used then not only the uses of $F_{g_i}$, the uniform upper bound of the time-varying constraint functions, is avoided, but also the upper bounds of the expected regret and constraint violation are both reduced.
Moreover, similar to the analysis in Remark~\ref{dbco:remarksubreg}, from \eqref{dbco:corollaryconsequ-two}, we know that Algorithm~\ref{dbco:algorithm-two} achieves sublinear expected constraint violation. Algorithm~\ref{dbco:algorithm-two} can also achieve sublinear expected dynamic regret if $V(\bsx^*_T)$ grows sublinearly. In this case, there exists a constant $\nu\in[0,1)$, such that $V(\bsx^*_T)=\mathcal{O}(T^\nu)$. Then setting $\bsy_T=\bsx^*_T$  and $\kappa\in(0,1-\nu)$ in Theorem~\ref{dbco:corollaryreg-two}  gives $\mathbf{E}[\Reg(\bsx_T,\bsx^*_T)]=\mathbf{o}(T)$. An advantage of Algorithm~\ref{dbco:algorithm-two} is that the total number of rounds  or any other parameters related to loss or constraint functions are not used, which is different from the two-point sampling algorithms in \cite{agarwal2010optimal,duchi2015optimal,shamir2017optimal,yi2016tracking,shames2019online,mahdavi2012trading,chen2018bandit,cao2019online,yuan2019distributed}.
\end{remark}

Setting $\bsy_T=\check{\bsx}^*_T$ in Theorem~\ref{dbco:corollaryreg-two} gives the following results.

\begin{corollary}\label{dbco:corollaryreg-two-static}
Under the same conditions as stated in Theorem~\ref{dbco:corollaryreg-two}, it holds that
\begin{subequations}
\begin{align}
&\mathbf{E}[\Reg(\bsx_T,\check{\bsx}^*_T)]\le  C_3T^{\max\{\kappa,1-\kappa\}},\label{dbco:corollaryregequ1-two-static}\\
&\mathbf{E}[\|[\sum_{t=1}^Tg_{t}(x_{t})]_+\|]\le \sqrt{C_{4}}T^{1-\kappa/2}.\label{dbco:corollaryconsequ-two-static}
\end{align}
\end{subequations}
\end{corollary}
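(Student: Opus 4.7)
The plan is to derive Corollary~\ref{dbco:corollaryreg-two-static} as an immediate specialization of Theorem~\ref{dbco:corollaryreg-two}. The key observation is that the static comparator sequence $\check{\bsx}^*_T=(\check{x}^*_T,\dots,\check{x}^*_T)$ is constant in $t$, so its path-length vanishes.

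First I would verify that $\check{\bsx}^*_T$ is an admissible comparator. By the standing feasibility assumption, $\check{\calX}_T\neq\emptyset$, and $\check{\calX}_T\subseteq\calX_T$, so $\check{\bsx}^*_T\in\calX_T$. Hence Theorem~\ref{dbco:corollaryreg-two} can be applied with $\bsy_T=\check{\bsx}^*_T$. Next I would compute
\begin{align*}
V(\check{\bsx}^*_T)=\sum_{t=1}^{T-1}\sum_{i=1}^{n}\|\check{x}^*_{i,T}-\check{x}^*_{i,T}\|=0,
\end{align*}
since every block of $\check{\bsx}^*_T$ is equal to the same vector $\check{x}^*_T$.

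Substituting $\bsy_T=\check{\bsx}^*_T$ into the bound \eqref{dbco:corollaryregequ1-two} of Theorem~\ref{dbco:corollaryreg-two} makes the term $2R_{\max}T^{\kappa}V(\bsy_T)$ vanish, yielding
\begin{align*}
\mathbf{E}[\Reg(\bsx_T,\check{\bsx}^*_T)]\le C_3T^{\max\{\kappa,1-\kappa\}},
\end{align*}
which is exactly \eqref{dbco:corollaryregequ1-two-static}. The constraint violation bound \eqref{dbco:corollaryconsequ-two-static} is identical to \eqref{dbco:corollaryconsequ-two} and does not depend on the comparator, so it carries over verbatim.

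There is essentially no obstacle: the corollary is a one-line consequence of Theorem~\ref{dbco:corollaryreg-two} once one notices that the static comparator has zero accumulated variation. All the analytical work, including the choice of the constants $C_3$ and $C_4$, the stepsize schedule \eqref{dbco:stepsize1-two}, and the handling of bandit noise and dual dynamics, is already absorbed into the theorem's statement.
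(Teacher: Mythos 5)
Your proposal is correct and matches the paper's own (one-line) derivation: the paper obtains the corollary simply by setting $\bsy_T=\check{\bsx}^*_T$ in Theorem~\ref{dbco:corollaryreg-two}, so that $V(\check{\bsx}^*_T)=0$ kills the path-length term while the constraint-violation bound is comparator-independent. Your additional check that $\check{\bsx}^*_T\in\calX_T$ via $\check{\calX}_T\subseteq\calX_T$ is a welcome (if implicit in the paper) detail.
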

\begin{remark}
The parameter $\kappa$ in Corollary~\ref{dbco:corollaryreg-two-static} enables the user to trade-off the expected static regret bound for the expected constraint violation bound. For example, setting $\kappa=1/2$ in Corollary~\ref{dbco:corollaryreg-two-static} gives $\mathbf{E}[\Reg(\bsx_T,\check{\bsx}^*_T)]=\mathcal{O}(\sqrt{T})$ and $\mathbf{E}[\|[\sum_{t=1}^Tg_{t}(x_{t})]_+\|]=\mathcal{O}(T^{3/4})$. These two bounds are the same as the bounds achieved in \cite{jenatton2016adaptive,sun2017safety,mahdavi2012trading}. In other words, Algorithm~\ref{dbco:algorithm-two} is as efficient as the algorithms proposed in \cite{mahdavi2012trading,jenatton2016adaptive,sun2017safety}.  However, \cite{jenatton2016adaptive,sun2017safety} use full-information feedback and \cite{mahdavi2012trading} considers bandit setting only for the constraint functions. The algorithms proposed in \cite{mahdavi2012trading,jenatton2016adaptive,sun2017safety} are centralized and the constraint functions considered in \cite{mahdavi2012trading,jenatton2016adaptive} are time-invariant. Moreover, in \cite{mahdavi2012trading,sun2017safety} the total number of rounds and in \cite{mahdavi2012trading,jenatton2016adaptive,sun2017safety} the upper bounds of the loss and constraint functions and their subgradients need to be known in advance to design algorithms.
Also, an $\mathcal{O}(\sqrt{T})$  expected static regret bound was achieved by the bandit algorithm in \cite{shamir2017optimal}. However, in \cite{shamir2017optimal} static set constraints (rather than time-varying inequality constraints) are considered and the proposed algorithm is centralized (rather than distributed). Moreover, in \cite{shamir2017optimal} the total number of rounds and the Lipschitz constant need to be known in advance.
\end{remark}

\section{Numerical Simulations}\label{dbco:simulation}
This section evaluates the performance of Algorithms~\ref{dbco:algorithm-one} and \ref{dbco:algorithm-two} in solving the power generation example introduced in Section~\ref{dbco:example}. The local cost and constraint functions are denoted
\begin{align*}
f_{i,t}(x_{i,t})=&x_{i,t}^\top \Pi_{i,t}^\top\Pi_{i,t}x_{i,t}+\langle \pi_{i,t},x_{i,t}\rangle,\\
g_{i,t}(x_{i,t})=&x_{i,t}^\top \Phi_{i,t}^\top\Phi_{i,t}x_{i,t}+\langle \phi_{i,t},x_{i,t}\rangle+c_{i,t},
\end{align*}
where $\Pi_{i,t}\in\mathbb{R}^{p_i\times p_i}$, $\pi_{i,t}\in\mathbb{R}^{p_i}_+$, $\Phi_{i,t}\in\mathbb{R}^{p_i\times p_i}$, $\phi_{i,t}\in\mathbb{R}^{p_i}$, and $c_{i,t}\in\mathbb{R}$.  At each time $t$, an undirected graph is used as the communication graph. Specifically, connections between vertices are random and the probability of two vertices being connected is $\rho$. To guarantee that Assumption~\ref{dbco:assgraph} holds, edges $(i,i+1),~i\in[n-1]$ are added and $[W_t]_{ij}=\frac{1}{n}$ if $(j,i)\in\mathcal{E}_t$ and $[W_t]_{ii}=1-\sum_{j\in\mathcal{N}^{\inout}_i(\mathcal{G}_t)}[W_t]_{ij}$.
The parameters are set as: $n=50$, $m=1$, $p_i=6$, $\mathbb{X}_i=[-10,10]^{p_i}$, and $\rho=0.2$.  Each element of $\Pi_{i,t}$, $\pi_{i,t}$, $\Phi_{i,t}$, $\phi_{i,t}$, and $c_{i,t}$ are drawn from the discrete uniform distribution in $[-5,5]$, $[0,10]$, $[-5,5]$, $[-5,5]$, and $[-5,-1]$, respectively.

Since there are no distributed bandit online algorithms to solve the problem of distributed online optimization with time-varying coupled inequality constraints, we compare our Algorithms~\ref{dbco:algorithm-one} and \ref{dbco:algorithm-two} with the centralized one- and two-point sampling algorithms\footnote{These two algorithms use full-information feedback for constraint functions.} in \cite{chen2018bandit} and the centralized two-point sampling algorithm in \cite{cao2019online}. Figs.~\ref{dbco:figreg} and \ref{dbco:figconst} show the evolutions of $\mathbf{E}[\Reg(\bsx_T,\bsx^*_T)]/T$ and $\mathbf{E}[\|[\sum_{t=1}^Tg_{t}(x_{t})]_+\|]/T$, respectively. The average is taken over 100 realizations. Note that $\mathbf{E}[\|[\sum_{t=1}^Tg_{t}(x_{t})]_+\|]/T\rightarrow0$. This is in agreement with \eqref{dbco:corollaryconsequ}, \eqref{dbco:corollaryconsequ-two}, and the theoretical results shown in \cite{chen2018bandit,cao2019online}. From the zoomed figures,  we can see that the centralized algorithms in \cite{chen2018bandit,cao2019online} achieve smaller expected dynamic regret and constraint violation than our distributed algorithms, which is reasonable. We can also see that Algorithm~\ref{dbco:algorithm-two} achieves smaller expected dynamic regret and constraint violation than Algorithm~\ref{dbco:algorithm-one}, which is consistent with the theoretical results.

\begin{figure}
  \centering
  \includegraphics[width=\linewidth]{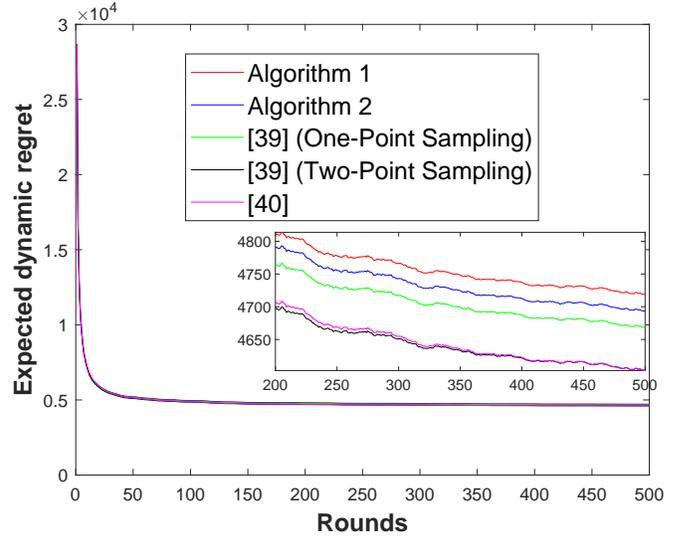}
\caption{Comparison of evolutions of the expected dynamic regret $\mathbf{E}[\Reg(\bsx_T,\bsx^*_T)]/T$.}
\label{dbco:figreg}
\end{figure}

\begin{figure}
  \centering
  \includegraphics[width=\linewidth]{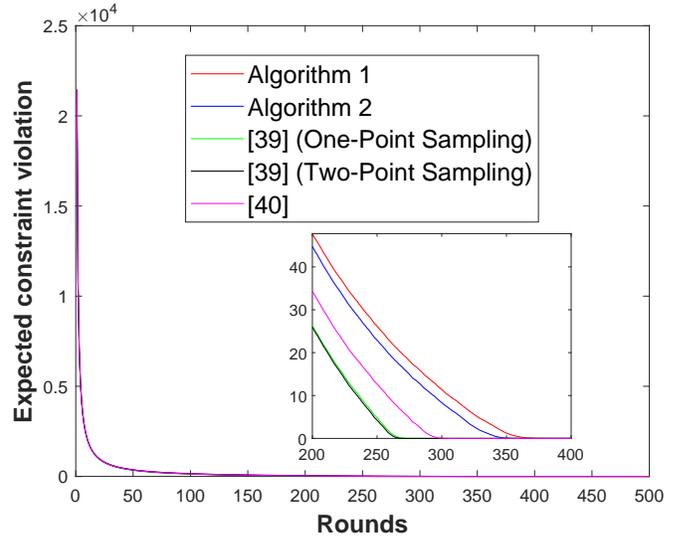}
\caption{Comparison of evolutions of the expected constraint violation $\mathbf{E}[\|[\sum_{t=1}^Tg_{t}(x_{t})]_+\|]/T$.}
\label{dbco:figconst}
\end{figure}

\section{Conclusions}\label{dbco:conclusions}
In this paper, we considered a distributed bandit online convex optimization problem with time-varying coupled inequality constraints.  We proposed distributed bandit online algorithms for one- and two-point bandit feedback. We showed that sublinear expected regret and constraint violation can be achieved. We showed that the results can be cast as non-trivial extensions of existing literature on online optimization and bandit feedback.  Future research directions include considering an adaptive choice of the number of samplings at each round by different learners. 

\bibliographystyle{IEEEtran}
\bibliography{refs}

\appendix\label{dbco:appendix}

\subsection{Useful Lemmas}

The following two lemmas are used in the proofs.
\begin{lemma}\label{dbco:lemma:projection}
Let $\mathbb{K}$ be a nonempty closed convex subset of $\mathbb{R}^{p}$ and let $a,~b,~c$ be three vectors in $\mathbb{R}^{p}$. The following statements hold.
\begin{enumerate}[label=(\alph*)]
\item For each $x\in\mathbb{R}^p$, $\calP_{\mathbb{K}}(x)$  exists and is unique.

\item $\calP_{\mathbb{K}}(x)$ is nonexpansive, i.e.,
\begin{align}\label{dbco:lemma:projection:none}
\|\calP_{\mathbb{K}}(x)-\calP_{\mathbb{K}}(y)\|\le\|x-y\|,~\forall x,y\in\mathbb{R}^p.
\end{align}
\item If $a\le b$, then
\begin{subequations}
\begin{align}
\|[a]_+\|\le&\|b\|,\label{dbco:lemma:projection:ab}\\
[a]_+\le&[b]_+.\label{dbco:lemma:projection:ab-2}
\end{align}
\end{subequations}
\item If $x_1=\calP_{\mathbb{K}}(c-a)$, then
\begin{align}
&2\langle x_1-y,a \rangle\notag\\
&\le\|y-c\|^2-\|y-x_1\|^2-\|x_1-c\|^2,~\forall y\in\mathbb{K}.\label{dbco:lemma:projection:xy}
\end{align}
\end{enumerate}
\end{lemma}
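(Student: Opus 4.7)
The four assertions are all standard facts about Euclidean projection onto a closed convex set, so the plan is to organize the proof around the variational characterization
\[
x_1=\calP_{\mathbb{K}}(z) \iff x_1\in\mathbb{K}\ \text{and}\ \langle z-x_1,\,y-x_1\rangle \le 0\ \text{for all}\ y\in\mathbb{K},
\]
which is the workhorse behind (a), (b), and (d). For (a), I would observe that $z\mapsto \|z-x\|^2$ is continuous, coercive, and strictly convex on the nonempty closed convex set $\mathbb{K}$; coercivity plus closedness gives a minimizer via a bounded minimizing sequence and extraction of a convergent subsequence, while strict convexity forces uniqueness. This also yields the variational characterization above as the first-order optimality condition.

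For (b), I would apply the variational inequality twice: once with $z=x$, $y=\calP_{\mathbb{K}}(y')$ and once with the roles of $x$ and $y'$ swapped, producing
\[
\langle x-\calP_{\mathbb{K}}(x),\,\calP_{\mathbb{K}}(y')-\calP_{\mathbb{K}}(x)\rangle\le 0,\qquad \langle y'-\calP_{\mathbb{K}}(y'),\,\calP_{\mathbb{K}}(x)-\calP_{\mathbb{K}}(y')\rangle\le 0.
\]
Adding the two and applying Cauchy--Schwarz gives $\|\calP_{\mathbb{K}}(x)-\calP_{\mathbb{K}}(y')\|^2\le\langle x-y',\calP_{\mathbb{K}}(x)-\calP_{\mathbb{K}}(y')\rangle\le\|x-y'\|\cdot\|\calP_{\mathbb{K}}(x)-\calP_{\mathbb{K}}(y')\|$, which is the nonexpansiveness claim.

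For (c), the argument is purely componentwise. The map $s\mapsto\max\{0,s\}$ is monotone, so $[a]_j\le[b]_j$ gives $[[a]_+]_j\le[[b]_+]_j$, proving \eqref{dbco:lemma:projection:ab-2}. For \eqref{dbco:lemma:projection:ab}, I would show $[[a]_+]_j^2\le [b]_j^2$ for each $j$ by splitting on the sign of $[a]_j$: if $[a]_j\le 0$ the left side is $0$; if $[a]_j>0$, then $0<[a]_j\le[b]_j$ yields $[a]_j^2\le[b]_j^2$. Summing over $j$ and taking square roots finishes the claim.

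For (d), which is the one with the most algebra, I would specialize the variational inequality to $z=c-a$: $\langle c-a-x_1,\,y-x_1\rangle\le 0$ for all $y\in\mathbb{K}$, i.e.\ $\langle x_1-y,\,a\rangle \le \langle x_1-y,\,c-x_1\rangle$. Then I would use the polarization identity
\[
\|y-c\|^2-\|y-x_1\|^2-\|x_1-c\|^2=2\langle y-x_1,\,x_1-c\rangle=-2\langle x_1-y,\,c-x_1\rangle\cdot(-1)=2\langle x_1-y,\,c-x_1\rangle\cdot(-1)+\cdots
\]
to rewrite the right-hand side, obtaining exactly $2\langle x_1-y,a\rangle\le \|y-c\|^2-\|y-x_1\|^2-\|x_1-c\|^2$. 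No step is really an obstacle; the only thing to be careful about is signs in the polarization identity in (d), so I would double-check by expanding $\|y-c\|^2 = \|(y-x_1)+(x_1-c)\|^2$ directly before invoking the variational inequality.
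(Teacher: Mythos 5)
Your proof is correct throughout, and for part (d) it is essentially the paper's argument in different clothing: the paper minimizes $h(y)=\|c-y\|^2+2\langle a,y\rangle$ (which is equivalent to projecting $c-a$) and writes the first-order optimality condition, which after substituting $\nabla h(x_1)=2x_1-2c+2a$ is exactly your variational inequality $\langle x_1-y,a\rangle\le\langle x_1-y,c-x_1\rangle$ combined with the expansion $\|y-c\|^2-\|y-x_1\|^2-\|x_1-c\|^2=2\langle x_1-y,c-x_1\rangle$. The differences are elsewhere. For (a) and (b) the paper simply cites Theorem~1.5.5 of Facchinei--Pang, whereas you prove them from scratch (coercivity plus strict convexity for existence/uniqueness, and the standard two-sided variational-inequality argument with Cauchy--Schwarz for nonexpansiveness); your version is self-contained but proves more than the paper bothers to. For \eqref{dbco:lemma:projection:ab} the paper has a slicker trick: it substitutes $x=a$, $y=a-b$ into the nonexpansiveness bound \eqref{dbco:lemma:projection:none} with $\mathbb{K}=\mathbb{R}^p_+$, using that $a\le b$ forces $[a-b]_+={\bf 0}_p$, so $\|[a]_+\|=\|[a]_+-[a-b]_+\|\le\|a-(a-b)\|=\|b\|$; your componentwise case split is more elementary and equally valid, and it has the small advantage of not routing a purely order-theoretic fact through the projection machinery. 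One presentational caveat: the displayed chain of equalities in your part (d) involving ``$\cdot(-1)$'' is garbled as written and should be replaced by the clean identity $\|y-c\|^2=\|y-x_1\|^2+2\langle y-x_1,x_1-c\rangle+\|x_1-c\|^2$, which you already indicate you would verify by direct expansion; this is a typo-level issue, not a gap.
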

\begin{proof}
The first two parts are from Theorem~1.5.5 in \cite{facchinei2007finite}.

Substituting $x=a$ and $y=a-b$ into \eqref{dbco:lemma:projection:none} with $\mathbb{K}=\mathbb{R}^p_+$ gives \eqref{dbco:lemma:projection:ab}. If $a\le b$, then it is straightforward to see $[a]_+\le[b]_+$ since all inequalities are understood componentwise.

Denote $h(y)=\|c-y\|^2+2\langle a,y \rangle$. Then, $x_1=\argmin_{y\in\mathbb{K}}h(y)$. This optimality condition implies that
\begin{align*}
\langle x_1-y,\nabla h(x_1) \rangle\le0,~\forall y\in\mathbb{K}.
\end{align*}
Substituting $\nabla h(x_1)=2x_1-2c+2a$ into above inequality yields \eqref{dbco:lemma:projection:xy}.
\end{proof}

\begin{lemma}\label{dbco:lemma:sequencet}
For any constants $\theta\in[0,1]$, $\kappa\in[0,1)$, and $s\le T\in\mathbb{N}_+$, it holds that
\begin{subequations}
\begin{align}
&(t+1)^\kappa\left(\frac{1}{t^\theta}-\frac{1}{(t+1)^\theta}\right)\le\frac{1}{t},~\forall t\in\mathbb{N}_+,\label{dbco:sequenceupp-1}\\
&\sum_{t=s}^T\frac{1}{t^\kappa}\le\frac{T^{1-\kappa}}{1-\kappa},\label{dbco:sequenceupp}\\
&\sum_{t=s}^T\frac{1}{t}\le2\log(T),~\text{if}~T\ge3.\label{dbco:sequenceupp-2}
\end{align}
\end{subequations}
\end{lemma}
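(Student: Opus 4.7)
The plan is to prove the three inequalities independently with elementary calculus. For \eqref{dbco:sequenceupp-1}, I would rewrite the parenthesized difference as
\[
\frac{1}{t^\theta}-\frac{1}{(t+1)^\theta}=\frac{(t+1)^\theta-t^\theta}{t^\theta(t+1)^\theta}
\]
and bound the numerator using concavity of $x\mapsto x^\theta$ on $(0,\infty)$, which is valid since $\theta\in[0,1]$. By the mean value theorem, $(t+1)^\theta-t^\theta=\theta\xi^{\theta-1}$ for some $\xi\in(t,t+1)$, and since $\theta-1\le 0$ the factor $\xi^{\theta-1}$ is maximized at $\xi=t$, giving $(t+1)^\theta-t^\theta\le\theta\, t^{\theta-1}\le t^{\theta-1}$. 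Substituting yields
\[
(t+1)^\kappa\left(\frac{1}{t^\theta}-\frac{1}{(t+1)^\theta}\right)\le\frac{(t+1)^\kappa}{t\cdot(t+1)^\theta},
\]
after which a manipulation comparing the exponents $\kappa$ and $\theta$ closes out with the bound $1/t$.

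For \eqref{dbco:sequenceupp} and \eqref{dbco:sequenceupp-2}, both bounds follow from integral-comparison arguments that exploit monotonicity. For (b), since $\kappa\in[0,1)$ the function $x\mapsto x^{-\kappa}$ is non-increasing on $(0,\infty)$, so $1/t^\kappa\le\int_{t-1}^t x^{-\kappa}\,dx$ for each $t\ge 1$. Extending the sum downward to $t=1$ (which only enlarges the left-hand side) and telescoping the integrals gives
\[
\sum_{t=s}^T\frac{1}{t^\kappa}\le\sum_{t=1}^T\int_{t-1}^t\frac{dx}{x^\kappa}=\int_0^T\frac{dx}{x^\kappa}=\frac{T^{1-\kappa}}{1-\kappa}.
\]
For (c), the singularity of $1/x$ at $0$ forces me to peel off the $t=1$ term first, yielding $\sum_{t=1}^T 1/t\le 1+\int_1^T dx/x=1+\log T$; this is $\le 2\log T$ iff $\log T\ge 1$, i.e., $T\ge e$, which holds for all integers $T\ge 3$.

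The main technical obstacle is part (a). The step after the concavity estimate demands a bound of the form $(t+1)^\kappa\le(t+1)^\theta$ (up to a harmless constant), which is not automatic for every $(\theta,\kappa)$ in the stated ranges: it is immediate when $\kappa\le\theta$, and would otherwise require a sharper exploitation of the factor $\theta\cdot t^{\theta-1}$ coming from concavity, or an implicit assumption that the applications of the lemma respect $\kappa\le\theta$. Parts (b) and (c) are routine and present no subtleties beyond the integral comparison.
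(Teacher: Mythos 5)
Your proofs of \eqref{dbco:sequenceupp} and \eqref{dbco:sequenceupp-2} are correct and essentially coincide with the paper's integral-comparison arguments, so nothing to add there. The problem is part (a), and the obstacle you flag at the end is not a technicality that a cleverer manipulation will remove: it is a genuine failure of the inequality. Your mean-value estimate gives
\[
(t+1)^\kappa\Bigl(\frac{1}{t^\theta}-\frac{1}{(t+1)^\theta}\Bigr)\le\frac{\theta\,(t+1)^{\kappa-\theta}}{t},
\]
and no sharper exploitation of the factor $\theta$ can rescue the case $0<\theta<\kappa$, because the left-hand side is asymptotically $\theta\,t^{\kappa-\theta-1}$, which exceeds $1/t$ for all large $t$ whenever $\kappa>\theta>0$. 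Concretely, for $\theta=1/2$, $\kappa=3/4$, $t=100$ one gets $101^{3/4}\bigl(100^{-1/2}-101^{-1/2}\bigr)\approx 0.0158>0.01=1/t$. So \eqref{dbco:sequenceupp-1} is false as stated; it holds exactly in the regime $\kappa\le\theta$ that your argument covers (and trivially when $\theta=0$ or $\theta=1$).

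For comparison, the paper proves (a) by asserting that $h_t(\theta)=t^{-\theta}-(t+1)^{-\theta}$ is non-decreasing in $\theta$ on $[0,1]$, so that $h_t(\theta)\le h_t(1)=\frac{1}{t(t+1)}$ and the prefactor $(t+1)^\kappa\le t+1$ is absorbed. That monotonicity claim is itself wrong: $\frac{dh_t(\theta)}{d\theta}\big|_{\theta=1}=\frac{\log(t+1)}{t+1}-\frac{\log t}{t}<0$ for $t\ge3$ because $x\mapsto\log(x)/x$ decreases past $e$, and indeed $h_3(0.9)\approx 0.0849>h_3(1)=1/12$. Your hope that the applications might implicitly respect $\kappa\le\theta$ also does not pan out: in the proof of Theorem~\ref{dbco:corollaryreg} the estimate is invoked with $\kappa=\theta_1$ and $\theta=\theta_3<\theta_1$ to bound $\sum_t(\xi_t-\xi_{t+1})/\alpha_{i,t+1}$, which is precisely the bad regime (the true order of that sum is $T^{\theta_1-\theta_3}$ rather than $\log T$; this happens to remain dominated by the $T^{\theta_1}$ term in the regret bound, but the lemma as stated cannot deliver it). In short: your (b) and (c) stand, your (a) is incomplete exactly where you said it was, and the honest resolution is that (a) requires the extra hypothesis $\kappa\le\theta$ (or a weakened conclusion) — the paper's own proof does not close this gap either.
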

\begin{proof}
(a) Denote $h_t(\theta)=\frac{1}{t^\theta}-\frac{1}{(t+1)^\theta}$. Then, for any fixed $t\in\mathbb{N}_+$, $\max_{\theta\in[0,1]}\{h_t(\theta)\}=h_t(1)$ since $\frac{dh_t(\theta)}{d\theta}\ge0,~\forall \theta\in[0,1]$. Hence, $(t+1)^\kappa h_t(\theta)\le (t+1)^\kappa h_t(1)=\frac{(t+1)^\kappa}{t(t+1)}\le\frac{1}{t}$, i.e., \eqref{dbco:sequenceupp-1} holds.

(b) \eqref{dbco:sequenceupp} holds since
\begin{align*}
\sum_{t=s}^T\frac{1}{t^\kappa}\le\int_{s-1}^T\frac{1}{t^\kappa}dt
=\frac{T^{1-\kappa}-(s-1)^{1-\kappa}}{1-\kappa}\le\frac{T^{1-\kappa}}{1-\kappa}.
\end{align*}

(c) \eqref{dbco:sequenceupp-2} holds since
\begin{align*}
\sum_{t=s}^T\frac{1}{t}\le\frac{1}{s}+\int_s^T\frac{1}{t}dt
=\frac{1}{s}+\log(T)-\log(s)\le2\log(T).
\end{align*}
\end{proof}

\subsection{Proof of Lemma~\ref{dbco:lemma:uniformsmoothing}}\label{dbco:lemma:uniformsmoothinproof}
(a) $\nabla \hat{f}(x)=\mathbf{E}_{u\in\mathbb{S}^p}[\hat{\nabla}_1f(x)]$ is the result of Lemma~1 in \cite{flaxman2005online}. $\nabla \hat{f}(x)=\mathbf{E}_{u\in\mathbb{S}^p}[\hat{\nabla}_2f(x)]$ since $\mathbf{E}_{u\in\mathbb{S}^p}[f(x)u]=f(x)\mathbf{E}_{u\in\mathbb{S}^p}[u]={\bf 0}_p$.

(b) $(1-\xi)\mathbb{K}$ is convex since $\mathbb{K}$ is convex.

For any $x,y\in(1-\xi)\mathbb{K}$ and $\alpha\in[0,1]$, then $\alpha x+(1-\alpha)y\in(1-\xi)\mathbb{K}$ since $(1-\xi)\mathbb{K}$ is convex and $\alpha x+(1-\alpha)y+\delta v\in\mathbb{K}$ due to Lemma~\ref{dbco:lemma:flaxman}. Moreover,
\begin{align*}
&\hat{f}(\alpha x+(1-\alpha)y)\\
&=\mathbf{E}_{v\in\mathbb{B}^p}[f(\alpha x+(1-\alpha)y+\delta v)]\\
&\le \mathbf{E}_{v\in\mathbb{B}^p}[\alpha f(x+\delta v)+(1-\alpha) f(y+\delta v)]\\
&=\alpha\hat{f}(x)+(1-\alpha)\hat{f}(y).
\end{align*}
Hence, $\hat{f}$ is convex on $(1-\xi)\mathbb{K}$.

From Lemma~\ref{dbco:lemma:flaxman}, we know that $(1-\xi)\mathbb{K}$ is a subset of the interior of $\mathbb{K}$. Then, for any $x\in(1-\xi)\mathbb{K}$, from Theorem~3.1.15 in \cite{nesterov2018lectures}, we know that $\nabla f(x)$ exists. Moreover,
\begin{align*}
&\hat{f}(x)=\mathbf{E}_{v\in\mathbb{B}^p}[f(x+\delta v)]\\
&\ge \mathbf{E}_{v\in\mathbb{B}^p}[f(x)+\delta\langle\nabla f(x),v\rangle]=f(x).
\end{align*}

(c) For any $x,y\in(1-\xi)\mathbb{K}$,
\begin{align*}
&|\hat{f}(x)-\hat{f}(y)|=|\mathbf{E}_{v\in\mathbb{B}^p}[f(x+\delta v)-f(y+\delta v)]|\\
&\le \mathbf{E}_{v\in\mathbb{B}^p}[|f(x+\delta v)-f(y+\delta v)|]\\
&\le \mathbf{E}_{v\in\mathbb{B}^p}[L_0(f)\|x-y\|]=L_0(f)\|x-y\|.
\end{align*}
Hence, $\hat{f}$ is Lipschitz-continuous on $(1-\xi)\mathbb{K}$ with constant $L_0(f)$.

Similarly,
\begin{align*}
&\|\nabla\hat{f}(x)-\nabla\hat{f}(y)\|=\frac{p}{\delta}\|\mathbf{E}_{u\in\mathbb{S}^p}[f(x+\delta u)u-f(y+\delta u)u]\|\\
&\le \frac{p}{\delta}\mathbf{E}_{u\in\mathbb{S}^p}[|f(x+\delta u)-f(y+\delta u)|\|u\|]\\
&\le \frac{p}{\delta}\mathbf{E}_{u\in\mathbb{S}^p}[L_0(f)\|x-y\|]=\frac{pL_0(f)}{\delta}\|x-y\|.
\end{align*}
Hence, $\nabla\hat{f}$ is Lipschitz-continuous on $(1-\xi)\mathbb{K}$ with constant $pL_0(f)/\delta$.

For any $x\in(1-\xi)\mathbb{K}$,
\begin{align*}
&|\hat{f}(x)-f(x)|=|\mathbf{E}_{v\in\mathbb{B}^p}[f(x+\delta v)]-\mathbf{E}_{v\in\mathbb{B}^p}[f(x)]|\\
&\le \mathbf{E}_{v\in\mathbb{B}^p}[|f(x+\delta v)-f(x)|]\\
&\le \mathbf{E}_{v\in\mathbb{B}^p}[\delta L_0(f)\|v\|]\le \mathbf{E}_{v\in\mathbb{B}^p}[\delta L_0(f)]=\delta L_0(f).
\end{align*}

(d) For any $x\in(1-\xi)\mathbb{K}$ and $u\in\mathbb{S}^p$,
\begin{align*}
&|\hat{f}(x)|=|\mathbf{E}_{v\in\mathbb{B}^p}[f(x+\delta v)]|\\
&\le \mathbf{E}_{v\in\mathbb{B}^p}[|f(x+\delta v)|]\le F_0(f),
\end{align*}
and
\begin{align*}
&\|\hat{\nabla}_1f(x)\|=\|\frac{p}{\delta}f(x+\delta u)u\|\\
&\le \frac{p}{\delta}|f(x+\delta u)|\|u\|\le\frac{pF_0(f)}{\delta}.
\end{align*}

(e) For any $x\in(1-\xi)\mathbb{K}$ and $u\in\mathbb{S}^p$,
\begin{align*}
&\|\hat{\nabla}_2f(x)\|=\|\frac{p}{\delta}(f(x+\delta u)-f(x))u\|\\
&\le\frac{pL_0(f)}{\delta}\|x+\delta u-x\|\|u\|=pL_0(f).
\end{align*}

\subsection{Proof of Theorem~\ref{dbco:corollaryreg}}\label{dbco:corollaryregproof}

To prove Theorem~\ref{dbco:corollaryreg}, the following three lemmas are used.
Lemma~\ref{dbco:lemma_virtualbound} presents the results on the local dual variables, while Lemma~\ref{dbco:lemma_regretdelta} provides an upper bound for the regret of one round.
Lemma~\ref{dbco:theoremreg} provides the expected regret constraint violation bounds for Algorithm~\ref{dbco:algorithm-one} for the general case.

To simplify notation, we denote $\beta_t=\beta_{i,t}$, $\gamma_t=\gamma_{i,t}$, and $\xi_t=\xi_{i,t}$.

\begin{lemma}\label{dbco:lemma_virtualbound}
Suppose Assumptions \ref{dbco:assgraph}--\ref{dbco:assfunction:function} hold. For all $i\in[n]$ and $t\in\mathbb{N}_+$, $\tilde{q}_{i,t}$ and $q_{i,t}$ generated by Algorithm~\ref{dbco:algorithm-one} satisfy
\begin{subequations}
\begin{align}
&\|\tilde{q}_{i,t+1}\|\le \frac{\sqrt{m}F_g}{\beta_t},~\|q_{i,t}\|\le \frac{\sqrt{m}F_g}{\beta_t},\label{dbco:lemma_virtualboundeqy}\\
&\|\tilde{q}_{i,t+1}-\bar{q}_{t}\|
\le 2\sqrt{m}nF_g\tau \sum_{s=1}^{t-1}\gamma_{s+1}\lambda^{t-1-s},\label{dbco:lemma_qbarequ}\\
&\frac{\Delta_{t+1}}{2\gamma_{t+1}}\nonumber\\
&\le (\bar{q}_{t}-q)^\top g_{t}(x_{t})+2mnF_g^2\gamma_{t+1}
+\frac{n\beta_{t+1}}{2}\|q\|^2+d_{1}(t)
,\label{dbco:gvirtualnorm}
\end{align}
\end{subequations}
where  $\bar{q}_{t}=\frac{1}{n}\sum_{i=1}^nq_{i,t}$,
\begin{align}\label{dbco:delta}\Delta_{t}=\sum_{i=1}^n\|q_{i,t}-q\|^2
-(1-\beta_t\gamma_t)\sum_{i=1}^n\|q_{i,t-1}-q\|^2,
\end{align}
$q$ is an arbitrary vector in $\mathbb{R}^m_+$,
\begin{align*}
d_{1}(t)=2mn^2F_g^2\tau \sum_{s=1}^{t}\gamma_{s+1}\lambda^{t-s}.
\end{align*}
\end{lemma}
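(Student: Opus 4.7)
For part (a), I would prove $\|q_{i,t}\|\le\sqrt{m}F_g/\beta_t$ by induction on $t$, from which $\|\tilde{q}_{i,t+1}\|\le\sqrt{m}F_g/\beta_t$ follows immediately by Jensen's inequality applied to the doubly stochastic convex combination in \eqref{dbco:algorithm-one:qhat} (Assumption~\ref{dbco:assgraph}(b)). The base case $t=1$ is trivial from $q_{i,1}=\bszero_m$. For the inductive step, I would use the non-expansiveness of $[\cdot]_+$ together with $[\bszero_m]_+=\bszero_m$ to obtain
\begin{align*}
\|q_{i,t}\| \le (1-\beta_t\gamma_t)\|\tilde{q}_{i,t}\| + \gamma_t\|g_{i,t-1}(x_{i,t-1})\|
\end{align*}
(once $1-\beta_t\gamma_t\ge0$, which holds for large enough $t$ under the stepsize choice). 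Plugging in the inductive bound $\|\tilde{q}_{i,t}\|\le\sqrt{m}F_g/\beta_{t-1}$ and $\|g_{i,t-1}(x_{i,t-1})\|\le\sqrt{m}F_g$ from Assumption~\ref{dbco:assfunction:function}, an elementary algebraic manipulation relying on $\beta_t\le\beta_{t-1}$ closes the induction.

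For part (b), the key auxiliary bound is $\|q_{i,s}-\tilde{q}_{i,s}\|\le 2\sqrt{m}F_g\gamma_s$. This follows by combining non-expansiveness of $[\cdot]_+$ with the identity $[\tilde{q}_{i,s}]_+=\tilde{q}_{i,s}$ to yield $\|q_{i,s}-\tilde{q}_{i,s}\|\le\|{-\beta_s\gamma_s\tilde{q}_{i,s}+\gamma_s g_{i,s-1}(x_{i,s-1})}\|$, after which part (a) and $\beta_s\le\beta_{s-1}$ give the stated estimate. Writing $r_{i,s}:=q_{i,s}-\tilde{q}_{i,s}$, I would unroll $\tilde{q}_{i,t+1}=\sum_j[W_t]_{ij}(\tilde{q}_{j,t}+r_{j,t})$ repeatedly, substituting $\tilde{q}_{j,s}=\sum_k[W_{s-1}]_{jk}q_{k,s-1}$ at each stage, until reaching $\tilde{q}_{j,2}=\bszero_m$. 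This produces
\begin{align*}
\tilde{q}_{i,t+1} = \sum_{s=2}^t \sum_j [\Phi(t,s+1)]_{ij}\, r_{j,s},
\end{align*}
with $\Phi(t,s+1):=W_t W_{t-1}\cdots W_{s+1}$ (empty product being the identity). A parallel telescoping, using double stochasticity to give $\frac{1}{n}\sum_i\tilde{q}_{i,s}=\bar{q}_{s-1}$, yields $\bar{q}_t=\sum_{s=2}^t\frac{1}{n}\sum_j r_{j,s}$. Subtracting and invoking the standard geometric consensus estimate $|[\Phi(t,s+1)]_{ij}-1/n|\le\tau\lambda^{t-s}$ (with the stated $\tau,\lambda$ coming from Assumption~\ref{dbco:assgraph}) produces the claimed bound after an index shift.

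Part (c) is the technical centerpiece. Since $q\in\mathbb{R}^m_+$ and $[\cdot]_+$ is the projection onto $\mathbb{R}^m_+$, I have $\|q_{i,t+1}-q\|^2\le\|p_{i,t+1}-q\|^2$ where $p_{i,t+1}:=(1-\beta_{t+1}\gamma_{t+1})\tilde{q}_{i,t+1}+\gamma_{t+1}g_{i,t}(x_{i,t})$. Expanding via the identity
\begin{align*}
\|(1-a)y-q\|^2 = (1-a)\|y-q\|^2 + a\|q\|^2 - a(1-a)\|y\|^2
\end{align*}
with $a=\beta_{t+1}\gamma_{t+1}$ (dropping the non-positive last term) and expanding the resulting cross product yields a per-agent estimate with principal cross term $2\gamma_{t+1}\langle\tilde{q}_{i,t+1}-q,g_{i,t}(x_{i,t})\rangle$, plus $\mathcal{O}(\gamma_{t+1}^2)$ residues controlled by part (a) and $\|g_{i,t}(x_{i,t})\|\le\sqrt{m}F_g$. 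Summing over $i$ and invoking convexity of $\|\cdot\|^2$ with double stochasticity gives $\sum_i\|\tilde{q}_{i,t+1}-q\|^2\le\sum_j\|q_{j,t}-q\|^2$, which produces the $\Delta_{t+1}$ structure on the left after rearrangement. The principal cross term finally decomposes as
\begin{align*}
\sum_i \langle \tilde{q}_{i,t+1}-q,g_{i,t}(x_{i,t})\rangle = \langle\bar{q}_t-q,g_t(x_t)\rangle + \sum_i\langle\tilde{q}_{i,t+1}-\bar{q}_t,g_{i,t}(x_{i,t})\rangle,
\end{align*}
and the second summand is controlled via Cauchy--Schwarz, part (b), and the uniform constraint bound, producing the $d_1(t)$ term. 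Dividing through by $2\gamma_{t+1}$ concludes.

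The principal obstacle is the bookkeeping in part (c): the projection $[\cdot]_+$ destroys linearity of the dual update, the agents' dual variables are coupled through the time-varying mixing matrices, and the per-step consensus error from part (b) must be combined with the contraction factor $(1-\beta_{t+1}\gamma_{t+1})$ in such a way that $\Delta_{t+1}$ emerges cleanly on the left while all remaining terms carry the correct powers of $\gamma_{t+1}$. The algebraic identity for $\|(1-a)y-q\|^2$ is essential here because it simultaneously furnishes the contraction factor in front of $\|\tilde{q}_{i,t+1}-q\|^2$ and cleanly isolates the $\|q\|^2$ regularization contribution coming from the $-\beta_{t+1}\gamma_{t+1}\bseta$-damping in the dual update.
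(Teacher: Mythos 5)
Your proposal is correct and follows essentially the same route as the paper's proof: induction with non-expansiveness of $[\cdot]_+$ for (a), a perturbed-consensus unrolling with the geometric mixing estimate for (b) (the paper outsources exactly this unrolling to Lemma~2 of \cite{lee2017sublinear}, which you reprove in-line), and non-expansiveness plus the same decomposition of the cross term through $\bar{q}_t$ for (c), with the consensus error from (b) yielding $d_1(t)$. The only cosmetic difference is that in (c) you absorb the dual damping via the convexity identity for $\|(1-a)y-q\|^2$, whereas the paper expands around $\tilde{q}_{i,t+1}-q$ and bounds $-2\beta_{t+1}\gamma_{t+1}(\tilde{q}_{i,t+1}-q)^\top\tilde{q}_{i,t+1}$ separately; both manipulations produce the same $(1-\beta_{t+1}\gamma_{t+1})$ contraction factor and the $\frac{n\beta_{t+1}}{2}\|q\|^2$ term.
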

\begin{proof}

(a) From (\ref{dbco:assfunction:ftgtupper}), we have
\begin{align}\label{dbco:lemma_qbarequb}
\|g_{i,t}(x_{i,t})\|\le \sqrt{m}F_g,~\forall i\in[n],~\forall t\in\mathbb{N}_+.
\end{align}

We prove (\ref{dbco:lemma_virtualboundeqy}) by induction.

It is straightforward to see that $q_{i,1}=\tilde{q}_{i,2}={\bf 0}_{m},~\forall i\in[n]$, thus $\|\tilde{q}_{i,2}\|\le \frac{\sqrt{m}F_g}{\beta_1},~\|q_{i,1}\|\le \frac{\sqrt{m}F_g}{\beta_1},~\forall i\in[n]$. Assume that (\ref{dbco:lemma_virtualboundeqy}) is true at time $t$ for all $i\in[n]$. We show that it remains true at time $t+1$.
Firstly, from \eqref{dbco:lemma:projection:ab}, (\ref{dbco:algorithm-one:q}), (\ref{dbco:lemma_qbarequb}), $1-\gamma_{t+1}\beta_{t+1}\ge0$, and $\beta_t\ge\beta_{t+1}$ we know that for all $i\in[n]$,
\begin{align*}
&\|q_{i,t+1}\|\le(1-\gamma_{t+1}\beta_{t+1})\|\tilde{q}_{i,t+1}\|+\gamma_{t+1}\|g_{i,t}(x_{i,t})\|\\
&\le(1-\gamma_{t+1}\beta_{t+1})\frac{\sqrt{m}F_g}{\beta_{t}}+\gamma_{t+1} \sqrt{m}F_g\\
&\le(1-\gamma_{t+1}\beta_{t+1})\frac{\sqrt{m}F_g}{\beta_{t+1}}+\gamma_{t+1} \sqrt{m}F_g\le \frac{ \sqrt{m}F_g}{\beta_{t+1}}.
\end{align*}
Then, the convexity of norms and $\sum_{j=1}^n[W_{t}]_{ij}=1$ yield
\begin{align*}
\|\tilde{q}_{i,t+2}\|\le&\sum_{j=1}^n[W_{t+1}]_{ij}\|q_{j,t+1}\|\le\sum_{j=1}^n[W_{t}]_{ij}\frac{ \sqrt{m}F_g}{\beta_{t+1}}\\
=& \frac{\sqrt{m}F_g}{\beta_{t+1}},~\forall i\in[n].
\end{align*}
Thus, (\ref{dbco:lemma_virtualboundeqy}) follows.

(b) Note that
(\ref{dbco:algorithm-one:q}) can be rewritten as
\begin{align}\label{dbco:lemma_qbarequeps-2}
q_{i,t+1}=\sum_{j=1}^n[W_{t}]_{ij}q_{j,t}+\epsilon^q_{i,t},
\end{align}
where $\epsilon^q_{i,t}=[(1-\gamma_{t+1}\beta_{t+1})\tilde{q}_{i,t+1}+\gamma_{t+1}g_{i,t}(x_{i,t})]_{+}-\tilde{q}_{i,t+1}$.
Then, (\ref{dbco:lemma:projection:none}), (\ref{dbco:lemma_virtualboundeqy}), and (\ref{dbco:lemma_qbarequb}) give
\begin{align}\label{dbco:lemma_qbarequeps}
\|\epsilon^q_{i,t}\|
\le&\|-\gamma_{t+1}\beta_{t+1}\tilde{q}_{i,t+1}+\gamma_{t+1}g_{i,t}(x_{i,t})\|\nonumber\\
\le&2\sqrt{m}F_g\gamma_{t+1},~\forall i\in[n].
\end{align}
Then, from Assumption~\ref{dbco:assgraph}, Lemma 2 in \cite{lee2017sublinear}, $q_{i,1}={\bf 0}_{m},~\forall i\in[n]$, and (\ref{dbco:lemma_qbarequeps}), we know that for any $i\in[n]$ and $ t\in\mathbb{N}_+$,
\begin{align}\label{dbco:lemma_qbarequeps-1}
\|q_{i,t+1}-\bar{q}_{t+1}\|
\le 2\sqrt{m}nF_g\tau \sum_{s=1}^{t}\gamma_{s+1}\lambda^{t-s}.
\end{align}
So (\ref{dbco:lemma_qbarequ}) follows since $\sum_{j=1}^n[W_{t}]_{ij}=1$ and $\| \tilde{q}_{i,t+1}-\bar{q}_{t}\|=\|\sum_{j=1}^n[W_{t}]_{ij}q_{j,t}-\bar{q}_{t}\|\le \sum_{j=1}^n[W_{t}]_{ij}\|q_{j,t}-\bar{q}_{t}\|$.

(c) Applying (\ref{dbco:lemma:projection:none}) to (\ref{dbco:algorithm-one:q}) yields
\begin{align}
&\|q_{i,t}-q\|^2\nonumber\\
&\le\|(1-\beta_t\gamma_t)\tilde{q}_{i,t}+\gamma_tg_{i,t-1}(x_{i,t-1})-q\|^2\nonumber\\
&=\|\tilde{q}_{i,t}-q\|^2+\gamma_t^2\|g_{i,t-1}(x_{i,t-1})-\beta_t\tilde{q}_{i,t}\|^2\nonumber\\
&~~~+2\gamma_t[\tilde{q}_{i,t}-q]^\top g_{i,t-1}(x_{i,t-1})-2\beta_t\gamma_t[\tilde{q}_{i,t}-q]^\top\tilde{q}_{i,t}.\label{dbco:qmu}
\end{align}
For the first term of the right-hand side of (\ref{dbco:qmu}), by convexity of norms and $\sum_{j=1}^n[W_{t-1}]_{ij}=1$, it can be concluded that
\begin{align}
\|\tilde{q}_{i,t}-q\|^2=&\|\sum_{j=1}^n[W_{t-1}]_{ij}q_{j,t-1}-\sum_{j=1}^n[W_{t-1}]_{ij}q\|^2\nonumber\\
\le&\sum_{j=1}^n[W_{t-1}]_{ij}\|q_{j,t-1}-q\|^2.\label{dbco:qmu0}
\end{align}
For the second term of the right-hand side of (\ref{dbco:qmu}), (\ref{dbco:lemma_virtualboundeqy}) and (\ref{dbco:lemma_qbarequb})  yield
\begin{align}
\gamma_t^2\|g_{i,t-1}(x_{i,t-1})-\beta_t\tilde{q}_{i,t}\|^2
\le(2\sqrt{m}F_g\gamma_t)^2.\label{dbco:qmu1}
\end{align}
For the fourth term of the right-hand side of (\ref{dbco:qmu}),  we have
\begin{align}
&2\gamma_t[\tilde{q}_{i,t}-q]^\top g_{i,t-1}(x_{i,t-1})
=2\gamma_t[\bar{q}_{t-1}-q]^\top g_{i,t-1}(x_{i,t-1})\nonumber\\
&+2\gamma_t[\tilde{q}_{i,t}-\bar{q}_{t-1}]^\top g_{i,t-1}(x_{i,t-1}).\label{dbco:qmu4}
\end{align}
Moreover, from (\ref{dbco:lemma_qbarequb}) and (\ref{dbco:lemma_qbarequ}), we have
\begin{align}
&2\gamma_t[\tilde{q}_{i,t}-\bar{q}_{t-1}]^\top g_{i,t-1}(x_{i,t-1})\nonumber\\
&\le2\gamma_t\|\tilde{q}_{i,t}-\bar{q}_{t-1}\|\|g_{i,t-1}(x_{i,t-1})\|
\le\frac{2\gamma_td_{1}(t-1)}{n}.\label{dbco:qmu5}
\end{align}
For the last term of the right-hand side of (\ref{dbco:qmu}),  neglecting the nonnegative term $\beta_t\gamma_t\|\tilde{q}_{i,t}\|^2$ gives
\begin{align}
-2\beta_t\gamma_t[\tilde{q}_{i,t}-q]^\top\tilde{q}_{i,t}
\le\beta_t\gamma_t(\|q\|^2-\|\tilde{q}_{i,t}-q\|^2).\label{dbco:qmu3}
\end{align}
 Combining (\ref{dbco:qmu})--(\ref{dbco:qmu3}), summing over $i\in[n]$, dividing by $2\gamma_t$, using $\sum_{i=1}^n[W_{t-1}]_{ij}=1,~\forall t\in\mathbb{N}_+$,  setting $t=t+1$, and rearranging the terms yields (\ref{dbco:gvirtualnorm}).
\end{proof}

\begin{lemma}\label{dbco:lemma_regretdelta}
Suppose Assumptions \ref{dbco:assgraph}--\ref{dbco:assfunction:function} hold. For all $i\in[n]$, let $\{x_{t}\}$ be the sequence generated by Algorithm~\ref{dbco:algorithm-one} and $\{y_t\}$ be an arbitrary sequence in $\mathbb{X}$, then
\begin{align}
&f_{t}(x_{t})-f_{t}(y_{t})\nonumber\\
&\le (\bar{q}_{t})^\top (g_{t}(y_{t})- g_{t}(x_{t}))+2d_{1}(t)+d_2(t)\nonumber\\
&~~~+\sum_{i=1}^n\frac{p_i^2F_{f_i}^2\alpha_{i,t+1}}{\delta_{i,t}^2}
+\sum_{i=1}^n\frac{2R_{i}\|y_{i,t+1}-y_{i,t}\|}{\alpha_{i,t+1}}\nonumber\\
&~~~+d_{3}(t)+\mathbf{E}_{\mathfrak{U}_{t}}[d_{4}(t)],~\forall t\in\mathbb{N}_+,\label{dbco:lemma_regretdeltaequ}
\end{align}
where $d_1(t)$ is given in Lemma~\ref{dbco:lemma_virtualbound},
\begin{align*}
d_2(t)=&\sum_{i=1}^n\Big\{(2\delta_{i,t}+R_{i}\xi_{t})
(\sqrt{m}G_{g_i}\|q_{i,t}\|+G_{f_i})\\
&~~~~~~+\frac{2R^2_{i}(\xi_{t}-\xi_{t+1})}{\alpha_{i,t+1}}\Big\},\\
d_{3}(t)=&2m\max_{i\in[n]}\{\frac{p_i^2F_{g_i}^2\alpha_{i,t+1}}{\delta_{i,t}^2}\}(n\|q\|^2
+\sum_{i=1}^n\|q_{i,t}-q\|^2),\\
d_{4}(t)=&\sum_{i=1}^n\frac{\|\check{y}_{i,t}-z_{i,t}\|^2
-\|\check{y}_{i,t+1}-z_{i,t+1}\|^2}{2\alpha_{i,t+1}},
\end{align*}
and $\check{y}_{i,t}=(1-\xi_{t})y_{i,t}$.
\end{lemma}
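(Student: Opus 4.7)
The plan is to reduce $f_t(x_t) - f_t(y_t)$ to a one-step primal--dual inequality centred on the deterministic iterate $z_{i,t}$ for the uniformly smoothed local augmented objective $h_{i,t}(x) := \hat{f}_{i,t}(x) + \tilde{q}_{i,t+1}^\top \hat{g}_{i,t}(x)$, and then close the loop by invoking the projection inequality \eqref{dbco:lemma:projection:xy} for the $z$-update \eqref{dbco:algorithm-one:x}. First I would absorb the exploration perturbation ($x_{i,t} - z_{i,t} = \delta_{i,t} u_{i,t}$) and the shrinkage ($y_{i,t} - \check{y}_{i,t} = \xi_t y_{i,t}$) by Lipschitz continuity \eqref{dbco:assfunction:functionLipf}--\eqref{dbco:assfunction:functionLipg} combined with parts~(b)--(c) of Lemma~\ref{dbco:lemma:uniformsmoothing}, which gives
\[
f_{i,t}(x_{i,t}) - f_{i,t}(y_{i,t}) \le \hat{f}_{i,t}(z_{i,t}) - \hat{f}_{i,t}(\check{y}_{i,t}) + G_{f_i}(2\delta_{i,t} + R_i \xi_t),
\]
and the analogous estimate for $\tilde{q}_{i,t+1}^\top [g_{i,t}(x_{i,t}) - g_{i,t}(y_{i,t})]$ with an extra factor $\sqrt{m}\,G_{g_i}\|\tilde{q}_{i,t+1}\|$. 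Summed over $i$ these are the first two summands of $d_2(t)$. I would then swap $\bar{q}_t$ for $\tilde{q}_{i,t+1}$ on the dual side via Cauchy--Schwarz and the consensus bound in Lemma~\ref{dbco:lemma_virtualbound}(b) together with $\|g_{i,t}\| \le \sqrt{m} F_g$, producing the $2\,d_1(t)$ term.

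What remains is to bound $\sum_i [h_{i,t}(z_{i,t}) - h_{i,t}(\check{y}_{i,t})]$. Convexity (Lemma~\ref{dbco:lemma:uniformsmoothing}(b), with $\tilde{q}_{i,t+1} \ge {\bf 0}_m$) gives $h_{i,t}(z_{i,t}) - h_{i,t}(\check{y}_{i,t}) \le \langle \nabla h_{i,t}(z_{i,t}), z_{i,t} - \check{y}_{i,t}\rangle$. Because $z_{i,t}$, $\check{y}_{i,t}$, and $\tilde{q}_{i,t+1}$ are $\mathcal{U}_{t-1}$-measurable and hence independent of $\mathfrak{U}_t$, the unbiasedness in Lemma~\ref{dbco:lemma:uniformsmoothing}(a) rewrites $\nabla h_{i,t}(z_{i,t})$ as $\mathbf{E}_{\mathfrak{U}_t}[a_{i,t+1}]$, turning the inner product into $\mathbf{E}_{\mathfrak{U}_t}[\langle a_{i,t+1}, z_{i,t} - \check{y}_{i,t}\rangle]$. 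I then apply Lemma~\ref{dbco:lemma:projection}(d) to \eqref{dbco:algorithm-one:x} at the comparator $y = \check{y}_{i,t}$ (which lies in $(1-\xi_{t+1})\mathbb{X}_i$ because $\xi_{t+1}\le\xi_t$ and $0\in\mathbb{X}_i$) and dispose of the $\langle z_{i,t}-z_{i,t+1}, \alpha_{i,t+1} a_{i,t+1}\rangle$ cross term via Young's inequality, yielding
\[
\langle a_{i,t+1}, z_{i,t} - \check{y}_{i,t}\rangle \le \frac{\alpha_{i,t+1}}{2}\|a_{i,t+1}\|^2 + \frac{\|\check{y}_{i,t} - z_{i,t}\|^2 - \|\check{y}_{i,t} - z_{i,t+1}\|^2}{2\alpha_{i,t+1}}.
\]
To let the projection distances telescope as $d_4(t)$, I shift the comparator from $\check{y}_{i,t}$ to $\check{y}_{i,t+1}$ via
$\|\check{y}_{i,t} - z_{i,t+1}\|^2 \ge \|\check{y}_{i,t+1} - z_{i,t+1}\|^2 - 2\|\check{y}_{i,t+1} - z_{i,t+1}\|\cdot\|\check{y}_{i,t+1} - \check{y}_{i,t}\|$, use $\|\check{y}_{i,t+1}-z_{i,t+1}\| \le 2R_i$ from \eqref{dbco:domainupper}, and $\|\check{y}_{i,t+1} - \check{y}_{i,t}\| \le \|y_{i,t+1}-y_{i,t}\| + R_i(\xi_t - \xi_{t+1})$. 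After division by $2\alpha_{i,t+1}$ this produces exactly the path-length term $2R_i\|y_{i,t+1}-y_{i,t}\|/\alpha_{i,t+1}$ and the final $2R_i^2(\xi_t - \xi_{t+1})/\alpha_{i,t+1}$ contribution to $d_2(t)$.

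Finally I would bound $\|a_{i,t+1}\|^2 \le 2\|\hat{\nabla}_1 f_{i,t}(z_{i,t})\|^2 + 2\|(\hat{\nabla}_1 g_{i,t}(z_{i,t}))^\top \tilde{q}_{i,t+1}\|^2$ componentwise by Lemma~\ref{dbco:lemma:uniformsmoothing}(d) (the second term inherits an extra $\sqrt{m}$ from summing over the $m$ components of $g$), then use doubly-stochasticity of $W_t$ to pass from $\sum_i \|\tilde{q}_{i,t+1}\|^2$ to $\sum_j \|q_{j,t}\|^2$, together with $\|q_{j,t}\|^2 \le 2\|q_{j,t}-q\|^2 + 2\|q\|^2$, arriving at the deterministic envelope $\sum_i p_i^2 F_{f_i}^2 \alpha_{i,t+1}/\delta_{i,t}^2 + d_3(t)$.

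The principal technical obstacle is the comparator-shift $\check{y}_{i,t} \mapsto \check{y}_{i,t+1}$: the expansion of the squared distance must keep exactly one factor controlled by the uniform bound $2R_i$, so that the path-length correction enters \emph{linearly} in $\|y_{i,t+1}-y_{i,t}\|$ (rather than quadratically), which is essential for the subsequent regret scaling. A secondary bookkeeping point is the careful tracking of $\mathfrak{U}_t$-measurability: only $\|a_{i,t+1}\|^2$ and $\|\check{y}_{i,t+1} - z_{i,t+1}\|^2$ are genuinely random under $\mathfrak{U}_t$, so the conditional expectation only needs to appear in front of $d_4(t)$, while the other random quantities are absorbed into their deterministic upper bounds.
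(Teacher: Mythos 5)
Your proposal is correct and follows essentially the same route as the paper's proof: Lipschitz/smoothing costs for the perturbation and shrinkage, the consensus swap from $\tilde{q}_{i,t+1}$ to $\bar{q}_t$ giving $2d_1(t)$, convexity plus the unbiasedness of $\hat{\nabla}_1$ to reduce to an inner product with $\mathbf{E}_{\mathfrak{U}_t}[a_{i,t+1}]$, the projection inequality \eqref{dbco:lemma:projection:xy} with Young's inequality, and the comparator shift $\check{y}_{i,t}\mapsto\check{y}_{i,t+1}$ bounded linearly in $\|y_{i,t+1}-y_{i,t}\|$. The only differences are cosmetic: you apply the one-step descent inequality to the combined objective $\hat{f}_{i,t}+\tilde{q}_{i,t+1}^\top\hat{g}_{i,t}$ and bound $\|a_{i,t+1}\|^2$ as a whole (the paper instead splits $\hat{\nabla}_1 f_{i,t}(z_{i,t})=a_{i,t+1}-(\hat{\nabla}_1 g_{i,t}(z_{i,t}))^\top\tilde{q}_{i,t+1}$ and recovers the dual-weighted constraint difference via convexity of $\hat{g}_{i,t}$, arriving at the same constants), and your shrinkage/perturbation cost carries $\|\tilde{q}_{i,t+1}\|$ where $d_2(t)$ is written with $\|q_{i,t}\|$ — immaterial downstream since both obey the uniform bound \eqref{dbco:lemma_virtualboundeqy}.
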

\begin{proof} (a) For any $i\in[n]$, $t\in\mathbb{N}_+$ and $x\in(1-\xi_{t})\mathbb{X}_i$, denote
\begin{align*}
\hat{f}_{i,t}(x)&=\mathbf{E}_{v\in\mathbb{B}^p}[f_{i,t}(x+\delta_{i,t} v)],\\
\hat{g}_{i,t}(x)&=\mathbf{E}_{v\in\mathbb{B}^p}[g_{i,t}(x+\delta_{i,t} v)].
\end{align*}

From Lemma~\ref{dbco:lemma:uniformsmoothing}, (\ref{dbco:assfunction:ftgtupper}), \eqref{dbco:lemma_qbarequb}, \eqref{dbco:assfunction:functionLipf}, and \eqref{dbco:assfunction:functionLipg}, we know that $\hat{f}_{i,t}(x)$ and $\hat{g}_{i,t}(x)$ are convex on $(1-\xi_{t})\mathbb{X}_i$, and for any $i\in[n]$, $t\in\mathbb{N}_+$ and $x\in(1-\xi_{t})\mathbb{X}_i$,
\begin{subequations}
\begin{align}
&\nabla\hat{f}_{i,t}(x)=\mathbf{E}_{\mathfrak{U}_{t}}[\hat{\nabla}_1f_{i,t}(x)],
\label{dbco:lemma_regretdeltaequ:fsmooth1}\\
&f_{i,t}(x)\le \hat{f}_{i,t}(x)\le f_{i,t}(x)+G_{f_i}\delta_{i,t} ,\label{dbco:lemma_regretdeltaequ:fsmooth2}\\
&\|\hat{\nabla}_1f_{i,t}(x)\|\le \frac{p_iF_{f_i}}{\delta_{i,t}},
\label{dbco:lemma_regretdeltaequ:fsmooth3}\\
&\nabla\hat{g}_{i,t}(x)=\mathbf{E}_{\mathfrak{U}_{t}}[\hat{\nabla}_1g_{i,t}(x)],
\label{dbco:lemma_regretdeltaequ:gsmooth1}\\
&g_{i,t}(x)\le \hat{g}_{i,t}(x)\le g_{i,t}(x)+ G_{g_i}\delta_{i,t}{\bf 1}_m,\label{dbco:lemma_regretdeltaequ:gsmooth2}\\
&\|\hat{\nabla}_1g_{i,t}(x)\|\le \frac{\sqrt{m}p_iF_{g_i}}{\delta_{i,t}},
\label{dbco:lemma_regretdeltaequ:gsmooth4}\\
&\|\hat{g}_{i,t}(x)\|\le \sqrt{m}F_{g_i}.\label{dbco:lemma_regretdeltaequ:gsmooth3}
\end{align}
\end{subequations}
Then, \eqref{dbco:assfunction:functionLipf}, \eqref{dbco:assfunction:functionLipg}, \eqref{dbco:domainupper}, and \eqref{dbco:lemma_regretdeltaequ:fsmooth2} yield
\begin{subequations}
\begin{align}
&|f_{i,t}(x_{i,t})-f_{i,t}(z_{i,t})|
\le G_{f_i}\|x_{i,t}-z_{i,t}\|\le G_{f_i}\delta_{i,t},\label{dbco:lemma_regretdelta:equ2.2}\\
&\|g_{i,t}(x_{i,t})-g_{i,t}(z_{i,t})\|\nonumber\\
&\le \sqrt{m}G_{g_i}\|x_{i,t}-z_{i,t}\|\le\sqrt{m}G_{g_i}\delta_{i,t},\label{dbco:lemma_regretdelta:equ2.3}\\
&\hat{f}_{i,t}(\check{y}_{i,t})-f_{i,t}(y_{i,t})\nonumber\\
&=f_{i,t}(\check{y}_{i,t})-f_{i,t}(y_{i,t})
+\hat{f}_{i,t}(\check{y}_{i,t})-f_{i,t}(\check{y}_{i,t})\nonumber\\
&\le G_{f_i}\|\check{y}_{i,t}-y_{i,t}\|+\hat{f}_{i,t}(\check{y}_{i,t})-f_{i,t}(\check{y}_{i,t})\nonumber\\
&\le G_{f_i}R_{i}\xi_{t}+ G_{f_i}\delta_{i,t},\label{dbco:lemma_regretdelta:equ1}\\
&f_{i,t}(z_{i,t})-\hat{f}_{i,t}(z_{i,t})
\le0,\label{dbco:lemma_regretdelta:equ2.1}\\
&\|g_{i,t}(\check{y}_{i,t})-g_{i,t}(y_{i,t})\|
\le \sqrt{m}G_{g_i}R_{i}\xi_{t}.\label{dbco:lemma_regretdelta:equ2}
\end{align}
\end{subequations}

From $\hat{f}_{i,t}(x)$ is convex on $(1-\xi_{t})\mathbb{X}_i$, we have that
\begin{align}\label{dbco:lemma_regretdelta:equ3}
&\hat{f}_{i,t}(z_{i,t})-\hat{f}_{i,t}(\check{y}_{i,t})
\le\langle\nabla\hat{f}_{i,t}(z_{i,t}),z_{i,t}-\check{y}_{i,t}\rangle\notag\\
&=\langle \mathbf{E}_{\mathfrak{U}_{t}}[\hat{\nabla}_1f_{i,t}(z_{i,t})],z_{i,t}-\check{y}_{i,t}\rangle\notag\\
&=\mathbf{E}_{\mathfrak{U}_{t}}[\langle \hat{\nabla}_1f_{i,t}(z_{i,t}),z_{i,t}-\check{y}_{i,t}\rangle],
\end{align}
where the first equality holds from \eqref{dbco:lemma_regretdeltaequ:fsmooth1} and the last equality holds since $z_{i,t}$ is independent of $\mathfrak{U}_{t}$.

Next, we rewrite the right-hand side of \eqref{dbco:lemma_regretdelta:equ3} into two terms and bound them individually.
\begin{align}\label{dbco:lemma_regretdelta:equ4}
&\mathbf{E}_{\mathfrak{U}_{t}}[\langle \hat{\nabla}_1f_{i,t}(z_{i,t}),z_{i,t}-\check{y}_{i,t}\rangle]\notag\\
&=\mathbf{E}_{\mathfrak{U}_{t}}[\langle\hat{\nabla}_1f_{i,t}(z_{i,t}),z_{i,t}-z_{i,t+1}\rangle]\notag\\
&~~~+\mathbf{E}_{\mathfrak{U}_{t}}[\langle\hat{\nabla}_1f_{i,t}(z_{i,t}),z_{i,t+1}-\check{y}_{i,t}\rangle].
\end{align}
For the first term of the right-hand side of \eqref{dbco:lemma_regretdelta:equ4}, the Cauchy-Schwarz  inequality and (\ref{dbco:lemma_regretdeltaequ:fsmooth3}) give
\begin{align}\label{dbco:lemma_regretdelta:equ5}
&\langle\hat{\nabla}_1f_{i,t}(z_{i,t}),z_{i,t}-z_{i,t+1}\rangle\notag\\
&\le \|\hat{\nabla}_1f_{i,t}(z_{i,t})\|\|z_{i,t}-z_{i,t+1}\|\le
\frac{p_iF_{f_i}}{\delta_{i,t}}\|z_{i,t}-z_{i,t+1}\|\notag\\
&\le\frac{p_i^2F_{f_i}^2\alpha_{i,t+1}}{\delta_{i,t}^2}+ \frac{1}{4\alpha_{i,t+1}}\|z_{i,t}-z_{i,t+1}\|^2.
\end{align}
For the second term of the right-hand side of \eqref{dbco:lemma_regretdelta:equ4}, it follows from \eqref{dbco:algorithm-one:a} that
\begin{align}
&\mathbf{E}_{\mathfrak{U}_{t}}[\langle\hat{\nabla}_1 f_{i,t}(z_{i,t}),z_{i,t+1}-\check{y}_{i,t}\rangle]\nonumber\\
&=\mathbf{E}_{\mathfrak{U}_{t}}[\langle(\hat{\nabla}_1g_{i,t}(z_{i,t}))^\top \tilde{q}_{i,t+1},\check{y}_{i,t}-z_{i,t+1}\rangle]\nonumber\\
&~~~+\mathbf{E}_{\mathfrak{U}_{t}}[\langle a_{i,t+1},z_{i,t+1}-\check{y}_{i,t}\rangle]\nonumber\\
&=\mathbf{E}_{\mathfrak{U}_{t}}[\langle(\hat{\nabla}_1g_{i,t}(z_{i,t}))^\top  \tilde{q}_{i,t+1},\check{y}_{i,t}-z_{i,t}\rangle]\nonumber\\
&~~~+\mathbf{E}_{\mathfrak{U}_{t}}[\langle(\hat{\nabla}_1g_{i,t}(z_{i,t}))^\top  \tilde{q}_{i,t+1},z_{i,t}-z_{i,t+1}\rangle]\nonumber\\
&~~~+\mathbf{E}_{\mathfrak{U}_{t}}[\langle a_{i,t+1},z_{i,t+1}-\check{y}_{i,t}\rangle].\label{dbco:lemma_regretdelta:equ6}
\end{align}
For the first term of the right-hand side of \eqref{dbco:lemma_regretdelta:equ6}, noting that $x_{i,t}$ and $\tilde{q}_{i,t+1}$ are dependent of $\mathfrak{U}_{t}$, from \eqref{dbco:lemma_regretdeltaequ:gsmooth1}, $\tilde{q}_{i,t+1}\ge{\bf0}_{m}$, $\bar{q}_{t}\ge{\bf0}_{m}$, \eqref{dbco:lemma_regretdeltaequ:gsmooth2}, and that $\hat{g}_{i,t}$ is convex, we have
\begin{align}
&\mathbf{E}_{\mathfrak{U}_{t}}[\langle(\hat{\nabla}_1g_{i,t}(z_{i,t}))^\top  \tilde{q}_{i,t+1},\check{y}_{i,t}-z_{i,t}\rangle]\nonumber\\
&=\langle(\mathbf{E}_{\mathfrak{U}_{t}}[\hat{\nabla}_1g_{i,t}(z_{i,t})])^\top  \tilde{q}_{i,t+1},\check{y}_{i,t}-z_{i,t}\rangle\nonumber\\
&=\langle(\nabla\hat{g}_{i,t}(z_{i,t}))^\top  \tilde{q}_{i,t+1},\check{y}_{i,t}-z_{i,t}\rangle\nonumber\\
&\le[ \tilde{q}_{i,t+1}]^\top \hat{g}_{i,t}(\check{y}_{{i,t}})-[ \tilde{q}_{i,t+1}]^\top \hat{g}_{i,t}(z_{i,t})\nonumber\\
&=[\bar{q}_{t}]^\top [\hat{g}_{i,t}(\check{y}_{i,t})- \hat{g}_{i,t}(z_{i,t})]\nonumber\\
&~~~+[ \tilde{q}_{i,t+1}-\bar{q}_{t}]^\top [\hat{g}_{i,t}(\check{y}_{i,t})- \hat{g}_{i,t}(z_{i,t})]\nonumber\\
&\le[\bar{q}_{t}]^\top [g_{i,t}(\check{y}_{i,t})+\delta_{i,t} G_{g_i}{\bf 1}_m- g_{i,t}(z_{i,t})]\nonumber\\
&~~~+[ \tilde{q}_{i,t+1}-\bar{q}_{t}]^\top [\hat{g}_{i,t}(\check{y}_{i,t})- \hat{g}_{i,t}(z_{i,t})].\label{dbco:lemma_regretdelta:equ8}
\end{align}
From \eqref{dbco:lemma_qbarequ} and (\ref{dbco:lemma_regretdeltaequ:gsmooth3}), we have
\begin{align}\label{dbco:lemma_regretdelta:equ9}
[\tilde{q}_{i,t+1}-\bar{q}_{t}]^\top [\hat{g}_{i,t}(\check{y}_{i,t})- \hat{g}_{i,t}(z_{i,t})]
\le
\frac{2d_{1}(t)}{n}.
\end{align}
For the second term of the right-hand side of \eqref{dbco:lemma_regretdelta:equ6}, from the Cauchy-Schwarz  inequality, \eqref{dbco:lemma_regretdeltaequ:gsmooth4}, and \eqref{dbco:qmu0} we have
\begin{align}
&\langle(\hat{\nabla}_1 g_{i,t}(z_{i,t}))^\top  \tilde{q}_{i,t+1},z_{i,t}-z_{i,t+1}\rangle\nonumber\\
&=q^\top\hat{\nabla}_1 g_{i,t}(z_{i,t})(z_{i,t}-z_{i,t+1})\nonumber\\
&~~~+(\tilde{q}_{i,t+1}-q)^\top\hat{\nabla}_1 g_{i,t}(z_{i,t})(z_{i,t}-z_{i,t+1})\nonumber\\
&\le \frac{2mp_i^2F_{g_i}^2\alpha_{i,t+1}}{\delta_{i,t}^2}\|q\|^2
+\frac{1}{8\alpha_{i,t+1}}\|z_{i,t+1}-z_{i,t}\|^2\nonumber\\
&~~~+\frac{2mp_i^2F_{g_i}^2\alpha_{i,t+1}}{\delta_{i,t}^2}\|\tilde{q}_{i,t+1}-q\|^2
+\frac{1}{8\alpha_{i,t+1}}\|z_{i,t+1}-z_{i,t}\|^2\nonumber\\
&\le 2m\max_{i\in[n]}\{\frac{p_i^2F_{g_i}^2\alpha_{i,t+1}}{\delta_{i,t}^2}\}\|q\|^2
+\frac{1}{4\alpha_{i,t+1}}\|z_{i,t+1}-z_{i,t}\|^2\nonumber\\
&~~~+2m\max_{i\in[n]}\{\frac{p_i^2F_{g_i}^2\alpha_{i,t+1}}{\delta_{i,t}^2}\}\sum_{j=1}^n[W_{t}]_{ij}\|q_{j,t}-q\|^2.\label{dbco:qmu2}
\end{align}
For the last term of the right-hand side of \eqref{dbco:lemma_regretdelta:equ6}, noting that $\check{y}_{i,t}\in(1-\xi_{t})\mathbb{X}_i\subseteq(1-\xi_{t+1})\mathbb{X}_i$ since $\xi_{t}\ge\xi_{t+1}$ and applying (\ref{dbco:lemma:projection:xy}) to the update rule (\ref{dbco:algorithm-one:x}) yields
\begin{align}\label{dbco:lemma_regretdelta:equ10}
&2\alpha_{i,t+1}\langle a_{i,t+1},z_{i,t+1}-\check{y}_{i,t}\rangle\nonumber\\
&\le\|\check{y}_{i,t}-z_{i,t}\|^2-\|\check{y}_{i,t}-z_{i,t+1}\|^2
-\|z_{i,t+1}-z_{i,t}\|^2\nonumber\\
&=\|\check{y}_{i,t+1}-z_{i,t+1}\|^2-\|\check{y}_{i,t}-z_{i,t+1}\|^2
+\|\check{y}_{i,t}-z_{i,t}\|^2\nonumber\\
&~~~-\|\check{y}_{i,t+1}-z_{i,t+1}\|^2-\|z_{i,t+1}-z_{i,t}\|^2.
\end{align}
The first two terms of the right-hand side of \eqref{dbco:lemma_regretdelta:equ10} can be bounded by
\begin{align}\label{dbco:lemma_regretdelta:equ11}
&\|\check{y}_{i,t+1}-z_{i,t+1}\|^2-\|\check{y}_{i,t}-z_{i,t+1}\|^2\notag\\
&\le\|\check{y}_{i,t+1}-\check{y}_{i,t}\|\|\check{y}_{i,t+1}+\check{y}_{i,t}-2z_{i,t+1}\|\notag\\
&\le 4R_{i}\|(1-\xi_{t+1})y_{i,t+1}-(1-\xi_{t})y_{i,t}\|\notag\\
&=4R_{i}\|(1-\xi_{t+1})(y_{i,t+1}-y_{i,t})+(\xi_{t}-\xi_{t+1})y_{i,t}\|\notag\\
&\le4R_{i}\|y_{i,t+1}-y_{i,t}\|+4R^2_{i}(\xi_{t}-\xi_{t+1}),
\end{align}
where the last inequality holds since $\{\xi_{t}\}\subseteq(0,1)$ is non-increasing.

Combining (\ref{dbco:lemma_regretdelta:equ1})--(\ref{dbco:lemma_regretdelta:equ11}), taking expectation in $\mathfrak{U}_{t+1}$, summing over $i\in[n]$, and rearranging the terms yields (\ref{dbco:lemma_regretdeltaequ}).
\end{proof}

\begin{lemma}\label{dbco:theoremreg}
Suppose Assumptions~\ref{dbco:assgraph}--\ref{dbco:assfunction:function} hold. For any $T\in\mathbb{N}_+$, let $\bsx_T$ be the sequence generated by Algorithm~\ref{dbco:algorithm-one}. Then, for any comparator sequence $\bsy_T\in\calX_{T}$,
\begin{subequations}
\begin{align}
&\mathbf{E}[\Reg(\bsx_T,\bsy_T)]\nonumber\\
&\le \sum_{t=1}^T\mathbf{E}[d_2(t)]+C_{0}\sum_{t=1}^T\gamma_{t+1}
+\sum_{t=1}^T\sum_{i=1}^n\frac{p_i^2F_{f_i}^2\alpha_{i,t+1}}{\delta_{i,t}^2}\nonumber\\
&~~~+\sum_{i=1}^n\frac{2R_{i}^2}{\alpha_{i,T+1}}+\sum_{t=1}^{T-1}\sum_{i=1}^n
\frac{2R_{i}\|y_{i,t+1}-y_{i,t}\|}{\alpha_{i,t+1}}\nonumber\\
&~~~+\frac{1}{2}\sum_{t=1}^T\sum_{i=1}^n(4m\max_{i\in[n]}\{\frac{p_i^2F_{g_i}^2\alpha_{i,t+1}}{\delta_{i,t}^2}\}
+\frac{1}{\gamma_{t+1}}
-\frac{1}{\gamma_{t}}\nonumber\\
&~~~-\beta_{t+1})\mathbf{E}[\|q_{i,t}\|^2],\label{dbco:theoremregequ}\\
&\mathbf{E}[\|[\sum_{t=1}^Tg_{t}(x_{t})]_+\|^2]\nonumber\\
&\le d_5(T)\bigg\{\sum_{t=1}^T\mathbf{E}[d_2(t)]+C_{0}\sum_{t=1}^T\gamma_{t+1}\nonumber\\
&~~~+\sum_{t=1}^T\sum_{i=1}^n\frac{p_i^2F_{f_i}^2\alpha_{i,t+1}}{\delta_{i,t}^2}
+\sum_{i=1}^n\frac{2R_{i}^2}{\alpha_{i,T+1}}+2T\sum_{i=1}^{n}F_{f_i}\nonumber\\
&~~~+\frac{1}{2}\sum_{t=1}^T\sum_{i=1}^n(4m\max_{i\in[n]}\{\frac{p_i^2F_{g_i}^2\alpha_{i,t+1}}{\delta_{i,t}^2}\}
+\frac{1}{\gamma_{t+1}}
-\frac{1}{\gamma_{t}}\nonumber\\
&~~~-\beta_{t+1})\mathbf{E}[\|q_{i,t}-q^*\|^2]\bigg\},\label{dbco:theoremconsequ}
\end{align}
\end{subequations}
where $d_5(T)=2n(\frac{1}{\gamma_1}
+\sum_{t=1}^T(4m\max_{i\in[n]}\{\frac{p_i^2F_{g_i}^2\alpha_{i,t+1}}{\delta_{i,t}^2}\}+\beta_{t+1}))$ and  $q^*=\frac{2[\sum_{t=1}^Tg_{t}(x_{t})]_+}
{d_5(T)}\in\mathbb{R}^m_{+}$.
\end{lemma}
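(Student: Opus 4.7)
The plan is to combine the per-round regret bound in Lemma~\ref{dbco:lemma_regretdelta} with the per-round dual recursion bound \eqref{dbco:gvirtualnorm} in Lemma~\ref{dbco:lemma_virtualbound}, sum both over $t=1,\dots,T$, telescope the dual-distance and primal-distance sequences by Abel summation, and then take the total expectation (the conditional $\mathbf{E}_{\mathfrak{U}_t}[\cdot]$ that appears inside \eqref{dbco:lemma_regretdeltaequ} collapses to a total expectation by the tower property). Throughout, I would use $q_{i,1}=\mathbf{0}_m$ to eliminate the initial boundary term and $\|\check{y}_{i,t}-z_{i,t}\|\le 2R_i$ (both lie in $R_i\mathbb{B}^{p_i}$) to bound the final boundary term in $d_4$.

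For the regret bound \eqref{dbco:theoremregequ}, I would set $q=\mathbf{0}_m$ in \eqref{dbco:gvirtualnorm}. Because $\bsy_T\in\calX_T$ implies $g_t(y_t)\le\mathbf{0}_m$ and $\bar{q}_t\ge\mathbf{0}_m$, the coupling term $\bar{q}_t^\top(g_t(y_t)-g_t(x_t))$ in \eqref{dbco:lemma_regretdeltaequ} is bounded by $-\bar{q}_t^\top g_t(x_t)$, which is in turn bounded by the RHS of \eqref{dbco:gvirtualnorm}. Summing, the $-\Delta_{t+1}/(2\gamma_{t+1})$ telescope produces $\sum_{t=1}^T \tfrac{1}{2}(1/\gamma_{t+1}-1/\gamma_t-\beta_{t+1})\sum_i\|q_{i,t}\|^2$, which then merges with $d_3(t)|_{q=0}=2m\max_i\{\cdot\}\sum_i\|q_{i,t}\|^2$ into the coefficient $\tfrac{1}{2}(4m\max_i\{\cdot\}+1/\gamma_{t+1}-1/\gamma_t-\beta_{t+1})$ displayed in \eqref{dbco:theoremregequ}. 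The $d_4(t)$ telescope yields $\sum_i 2R_i^2/\alpha_{i,T+1}$; swapping the double sum in $\sum_t d_1(t)$ gives $\sum_t d_1(t)\le \tfrac{2mn^2F_g^2\tau}{1-\lambda}\sum_t\gamma_{t+1}$, which combined with the $2mnF_g^2\gamma_{t+1}$ terms from \eqref{dbco:gvirtualnorm} yields $C_0\sum_t\gamma_{t+1}$. The remaining terms ($d_2(t)$, $p_i^2F_{f_i}^2\alpha_{i,t+1}/\delta_{i,t}^2$, and the $V(\bsy_T)$ term) pass through unchanged.

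For the constraint violation bound \eqref{dbco:theoremconsequ}, I would repeat the same chain but keep $q$ arbitrary in \eqref{dbco:gvirtualnorm}, choose $\bsy_T\in\check{\calX}_T$ (a constant feasible sequence, which exists by assumption and forces $V(\bsy_T)=0$ while preserving $\bar{q}_t^\top g_t(y_t)\le 0$), and replace the LHS by the crude bound $-(f_t(x_t)-f_t(y_t))\le 2\sum_iF_{f_i}$ guaranteed by Assumption~\ref{dbco:assfunction:function}(b). Rearranging yields, for every $q\in\mathbb{R}^m_+$, an inequality of the form $q^\top\sum_{t=1}^T g_t(x_t)\le [\text{the RHS of }\eqref{dbco:theoremregequ}\text{ with }V(\bsy_T)=0\text{ and }\|q_{i,t}\|^2\text{ replaced by }\|q_{i,t}-q\|^2] + 2T\sum_iF_{f_i}+\tfrac{d_5(T)}{4}\|q\|^2$, where the coefficient $d_5(T)/4=n/(2\gamma_1)+\sum_t(2mn\max_i\{\cdot\}+n\beta_{t+1}/2)$ gathers all $\|q\|^2$ contributions arising from the Abel-summation boundary term $n\|q\|^2/(2\gamma_1)$, the $d_3(t)$ contribution $2mn\max_i\{\cdot\}\|q\|^2$, and the penalty term $n\beta_{t+1}\|q\|^2/2$ in \eqref{dbco:gvirtualnorm}. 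Substituting $q=q^*=2[\sum_t g_t(x_t)]_+/d_5(T)$ and using the elementary identity $[a]_+^\top a=\|[a]_+\|^2$ gives $q^{*\top}\sum_t g_t(x_t)=2\|[\sum g]_+\|^2/d_5(T)$ and $(d_5(T)/4)\|q^*\|^2=\|[\sum g]_+\|^2/d_5(T)$; moving the latter to the LHS and multiplying by $d_5(T)$ leaves \eqref{dbco:theoremconsequ} after taking total expectation.

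The main obstacle is the bookkeeping: verifying that the $\|q\|^2$ coefficients assembled from the primal step, the dual penalty, the gradient-estimation overhead $d_3(t)$, and the boundary term of the Abel summation sum \emph{exactly} to $d_5(T)/4$, so that the quadratic term reduces the LHS coefficient of $\|[\sum g]_+\|^2$ from $2/d_5(T)$ to $1/d_5(T)$ and the bound closes. A secondary check is that substituting the random vector $q^*$ into a pathwise inequality valid for all deterministic $q\in\mathbb{R}^m_+$ is legitimate (which it is, because the inequality holds $\omega$-wise before expectation), and that $\{\alpha_{i,t},\beta_t,\gamma_t,\xi_t\}$ being non-increasing keeps all the Abel-summation coefficients of the form $1/\alpha_{i,t+1}-1/\alpha_{i,t}$ nonnegative so the telescope-and-bound step is valid.
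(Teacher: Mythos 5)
Your proposal follows the paper's proof essentially step for step: combining Lemma~\ref{dbco:lemma_regretdelta} with \eqref{dbco:gvirtualnorm}, telescoping the dual and primal distance terms, collapsing the conditional expectations by the tower property, setting $q={\bf 0}_m$ for the regret bound, and choosing a constant feasible comparator together with $q=q^*$ and the identity $[a]_+^\top a=\|[a]_+\|^2$ for the constraint bound (the paper merely packages the $\|q\|^2$ bookkeeping into an auxiliary function $g_c(q)=(\sum_t g_t(x_t))^\top q-\tfrac{d_5(T)}{4}\|q\|^2$, which is cosmetically different from your inline accounting but arithmetically identical). The coefficient check you flag as the main obstacle does close exactly as you describe, so the argument is correct and matches the paper's.
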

\begin{proof}
(a) For any $\lambda\in(0,1)$ and nonnegative sequence $\zeta_1,\zeta_2,\dots$, it holds that
\begin{align}\label{dbco:zeta}
\sum_{t=1}^T\sum_{s=1}^{t}\zeta_{s+1}\lambda^{t-s}
=\sum_{t=1}^{T}\zeta_{t+1}\sum_{s=0}^{T-t}\lambda^{s}
\le\frac{1}{1-\lambda}\sum_{t=1}^T\zeta_{t+1}.
\end{align}
Thus,
\begin{align}\label{dbco:zeta-d2}
\sum_{t=1}^Td_{1}(t)\le\frac{2\sqrt{m}n^2\tau B_1F_g}{1-\lambda}\sum_{t=1}^T\gamma_{t+1}.
\end{align}

The definition of $\Delta_t$ given by \eqref{dbco:delta} yields
\begin{align}
&-\sum_{t=1}^T\frac{\Delta_{t+1}}{2\gamma_{t+1}}\nonumber\\
&=\sum_{t=1}^T\frac{1}{2\gamma_{t+1}}\sum_{i=1}^n[(1-\beta_{t+1}\gamma_{t+1})\|q_{i,t}-q\|^2\nonumber\\
&~~~-\|q_{i,t+1}-q\|^2]\nonumber\\
&=\frac{1}{2}\sum_{i=1}^n\sum_{t=1}^T[\frac{1}{\gamma_{t}}\|q_{i,t}-q\|^2
-\frac{1}{\gamma_{t+1}}\|q_{i,t+1}-q\|^2]\nonumber\\
&~~~+\frac{1}{2}\sum_{t=1}^T\sum_{i=1}^n(\frac{1}{\gamma_{t+1}}
-\frac{1}{\gamma_{t}}-\beta_{t+1})\|q_{i,t}-q\|^2\nonumber\\
&=\frac{1}{2}\sum_{i=1}^n[\frac{1}{\gamma_{1}}\|q_{i,1}-q\|^2
-\frac{1}{\gamma_{T+1}}\|q_{i,T+1}-q\|^2]\nonumber\\
&~~~+\frac{1}{2}\sum_{t=1}^T\sum_{i=1}^n(\frac{1}{\gamma_{t+1}}
-\frac{1}{\gamma_{t}}-\beta_{t+1})\|q_{i,t}-q\|^2\nonumber\\
&\le\frac{n}{2\gamma_{1}}\|q\|^2+\frac{1}{2}\sum_{t=1}^T\sum_{i=1}^n(\frac{1}{\gamma_{t+1}}
-\frac{1}{\gamma_{t}}-\beta_{t+1})\|q_{i,t}-q\|^2,\label{dbco:qmu7}
\end{align}
where the last inequality holds since $q_{i,1}={\bf 0}_m$ and $\|q_{i,T+1}-q\|^2\ge0$.

From the properties of conditional expectation, we know that
\begin{align}\label{dbco:qmu8}
\mathbf{E}_{\calU_{T}}[\mathbf{E}_{\mathfrak{U}_{t}}[d_{4}(t)]]=\mathbf{E}[d_{4}(t)],~\forall t\in[T].
\end{align}

Noting that $\{\alpha_t\}$ is non-increasing and (\ref{dbco:domainupper}), for any $s\in[T]$, we have
\begin{align}
&\sum_{t=s}^Td_{4}(t)\nonumber\\
&=\frac{1}{2}\sum_{t=s}^T\sum_{i=1}^n(\frac{1}{\alpha_{i,t}}\|\check{y}_{i,t}-z_{i,t}\|^2
-\frac{1}{\alpha_{i,t+1}}\|\check{y}_{i,t+1}-z_{i,t+1}\|^2)\nonumber\\
&~~~+\frac{1}{2}\sum_{t=s}^T\sum_{i=1}^n(\frac{1}{\alpha_{i,t+1}}-\frac{1}{\alpha_{i,t}})
\|\check{y}_{i,t}-z_{i,t}\|^2\nonumber\\
&\le\frac{1}{2\alpha_{i,s}}\sum_{i=1}^n\|\check{y}_{i,s}-z_{i,s}\|^2\nonumber\\
&~~~-\frac{1}{2\alpha_{i,T+1}}\sum_{i=1}^n\|\check{y}_{i,T+1}-z_{i,T+1}\|^2\nonumber\\
&~~~+2\sum_{i=1}^n(\frac{1}{\alpha_{i,T+1}}
-\frac{1}{\alpha_{i,s}})R_{i}^2\le\sum_{i=1}^n\frac{2R_{i}^2}{\alpha_{i,T+1}}.\label{dbco:dyz}
\end{align}

Let $g_c:\mathbb{R}^m_{+}\rightarrow\mathbb{R}$ be a function defined as
\begin{align}\label{dbco:gc}
g_c(q)=(\sum_{t=1}^Tg_{t}(x_{t}))^\top q-\frac{d_5(T)}{4}\|q\|^2.
\end{align}

Combining (\ref{dbco:gvirtualnorm}) and (\ref{dbco:lemma_regretdeltaequ}), summing over $t\in[T]$, using (\ref{dbco:zeta-d2})--(\ref{dbco:gc})  and $g_{t}(y_t)\le{\bf 0}_{m},~\bsy_T\in\calX_{T}$, and taking expectation in $\calU_{T}$ yields
\begin{align}\label{dbco:theoremconsequ2}
&\mathbf{E}[g_c(q)]+\mathbf{E}[\Reg(\bsx_T,\bsy_T)]\nonumber\\
&\le \sum_{t=1}^T\mathbf{E}[d_2(t)]+C_{0}\sum_{t=1}^T\gamma_{t+1}
+\sum_{t=1}^T\sum_{i=1}^n\frac{p_i^2F_{f_i}^2\alpha_{i,t+1}}{\delta_{i,t}^2}\nonumber\\
&~~~+\sum_{i=1}^n\frac{2R_{i}^2}{\alpha_{i,T+1}}+\sum_{t=1}^T\sum_{i=1}^n
\frac{2R_{i}\|y_{i,t+1}-y_{i,t}\|}{\alpha_{i,t+1}}\nonumber\\
&~~~+\frac{1}{2}\sum_{t=1}^T\sum_{i=1}^n(4m\max_{i\in[n]}\{\frac{p_i^2F_{g_i}^2\alpha_{i,t+1}}{\delta_{i,t}^2}\}
+\frac{1}{\gamma_{t+1}}
-\frac{1}{\gamma_{t}}\nonumber\\
&~~~-\beta_{t+1})\mathbf{E}[\|q_{i,t}-q\|^2],~\forall q\in\mathbb{R}^m_{+}.
\end{align}
Then, substituting $q={\bf 0}_{m}$ into (\ref{dbco:theoremconsequ2}), setting $y_{i,T+1}=y_{i,T}$, and noting that $\{\alpha_t\}$ is non-increasing yields (\ref{dbco:theoremregequ}).

(b) Substituting $q=q^*$ into $g_c(q)$ gives
\begin{align}\label{dbco:gcequ}
g_c(q^*)=&\frac{\|[\sum_{t=1}^Tg_{t}(x_{t})]_+\|^2}
{d_5(T)}.
\end{align}
Moreover, (\ref{dbco:assfunction:ftgtupper}) gives
\begin{align}
|\Reg(\bsx_T,\bsy_T)|\le&2T\sum_{i=1}^{n}F_{f_i},~\forall \bsy_T\in\calX_T.\label{dbco:ff}
\end{align}
Substituting $q=q^*$ and $y_t=\check{x}^*_T,~t\in[T+1]$ into (\ref{dbco:theoremconsequ2}), combining (\ref{dbco:gcequ})--(\ref{dbco:ff}), and rearranging the terms gives (\ref{dbco:theoremconsequ}).
\end{proof}

We are now ready to prove Theorem~\ref{dbco:corollaryreg}.

(a) Applying (\ref{dbco:sequenceupp-1}), (\ref{dbco:sequenceupp}), and \eqref{dbco:lemma_virtualboundeqy} to the first three terms of the right-hand side of (\ref{dbco:theoremregequ}) and noting $\theta_2<\theta_3$ gives
\begin{subequations}
\begin{align}
&\sum_{t=1}^T\mathbf{E}[d_2(t)]\le\sum_{i=1}^n\frac{mF_gG_{g_i}(2r_i+R_{i})}{1-\theta_3+\theta_2}T^{1-\theta_3+\theta_2}\notag\\
&+\sum_{i=1}^n\frac{G_{f_i}(2r_i+R_{i})}{1-\theta_3}T^{1-\theta_3}
+\sum_{i=1}^n\frac{8mp_i^2F_{g_i}^2R^2_{i}}{r_i^2}\log(T),\label{dbco:corollaryregequ10}\\
&C_{0}\sum_{t=1}^T\gamma_{t+1}\le\frac{C_{0}}{\theta_2}T^{\theta_2},\label{dbco:corollaryregequ11}\\
&\sum_{t=1}^T\sum_{i=1}^n\frac{p_i^2F_{f_i}^2\alpha_{i,t+1}}{\delta_{i,t}^2}\notag\\
&\le\sum_{i=1}^n\frac{F_{f_i}^2}{4mF_{g_i}^2(1-\theta_1+2\theta_3)}T^{1-\theta_1+2\theta_3}.\label{dbco:corollaryregequ12}
\end{align}
\end{subequations}

From \eqref{dbco:stepsize1} and $\theta_1-2\theta_3\ge \theta_2$ we know that
\begin{align}\label{dbco:corollaryregequ12-2}
&4m\max_{i\in[n]}\{\frac{p_i^2F_{g_i}^2\alpha_{i,t+1}}{\delta_{i,t}^2}\}
+\frac{1}{\gamma_{t+1}}
-\frac{1}{\gamma_{t}}-\beta_{t+1}\notag\\
&=\frac{1}{(t+1)^{\theta_1-2\theta_3}}+\frac{t+1}{(t+1)^{\theta_2}}-\frac{t}{t^{\theta_2}}-
\frac{2}{(t+1)^{\theta_2}}\notag\\
&\le\frac{1}{(t+1)^{\theta_2}}+\frac{t+1}{(t+1)^{\theta_2}}-\frac{t}{t^{\theta_2}}-
\frac{2}{(t+1)^{\theta_2}}\notag\\
&=\frac{t}{(t+1)^{\theta_2}}-\frac{t}{t^{\theta_2}}<0.
\end{align}

Combining (\ref{dbco:theoremregequ}) and (\ref{dbco:corollaryregequ10})--(\ref{dbco:corollaryregequ12-2}) yields (\ref{dbco:corollaryregequ1}).

(b) Using (\ref{dbco:sequenceupp})  and noting $\theta_1-2\theta_3\ge \theta_2$ gives
\begin{align}
d_5(T)\le C_{2,1}T^{1-\theta_2}.\label{dbco:corollaryconsequ1}
\end{align}

Combining (\ref{dbco:theoremconsequ}) and (\ref{dbco:corollaryregequ10})--(\ref{dbco:corollaryconsequ1}) gives
\begin{align}
\mathbf{E}[\|[\sum_{t=1}^Tg_{t}(x_{t})]_+\|^2]\le C_{2}T^{2-\theta_2}.\label{dbco:corollaryconsequ2}
\end{align}

Finally, combining \eqref{dbco:corollaryconsequ2} and $(\mathbf{E}[\|[\sum_{t=1}^Tg_{t}(x_{t})]_+\|])^2\le \mathbf{E}[\|[\sum_{t=1}^Tg_{t}(x_{t})]_+\|^2]$ (which follows from Jensen's inequality) gives (\ref{dbco:corollaryconsequ}).

\subsection{Proof of Theorem~\ref{dbco:corollaryreg-two}}\label{dbco:corollaryreg-twoproof}
The proof is similar to the proof of Theorem~\ref{dbco:corollaryreg} with some modifications. Lemmas~\ref{dbco:lemma_virtualbound}--\ref{dbco:theoremreg} are replaced by the following Lemmas~\ref{dbco:lemma_virtualbound-two}--\ref{dbco:theoremreg-two}.

To simplify notation, we denote $\alpha_t=\alpha_{i,t}$, $\beta_t=\beta_{i,t}$, $\gamma_t=\gamma_{i,t}$, and $\xi_t=\xi_{i,t}$.

\begin{lemma}\label{dbco:lemma_virtualbound-two}
Suppose Assumptions \ref{dbco:assgraph}--\ref{dbco:assfunction:function} hold. For all $i\in[n]$ and $t\in\mathbb{N}_+$, $\tilde{q}_{i,t}$ and $q_{i,t}$ generated by Algorithm~\ref{dbco:algorithm-two} satisfy
\begin{subequations}
\begin{align}
&\|\tilde{q}_{i,t+1}\|\le \frac{B_1}{\beta_t},~\|q_{i,t}\|\le \frac{B_1}{\beta_t},\label{dbco:lemma_virtualboundeqy-two}\\
&\|\tilde{q}_{i,t+1}-\bar{q}_{t}\|
\le 2nB_1\tau \sum_{s=1}^{t-1}\gamma_{s+1}\lambda^{t-1-s},\label{dbco:lemma_qbarequ-two}\\
&\frac{\Delta_{t+1}}{2\gamma_{t+1}}\nonumber\\
&\le (\bar{q}_{t}-q)^\top g_{t}(x_{t})+2nB_1^2\gamma_{t+1}
+d_{6}(t)\nonumber\\
&~~~
+\frac{1}{2}\sum_{i=1}^n(2mp_i^2G_{g_i}^2\alpha_{t+1}+\beta_{t+1})\|q\|^2+d_{7}(t)
,\label{dbco:gvirtualnorm-two}
\end{align}
\end{subequations}
where
$q$ is an arbitrary vector in $\mathbb{R}^m_+$,
\begin{align*}
d_{6}(t)=2\sqrt{m}n^2B_1F_g\tau \sum_{s=1}^{t}\gamma_{s+1}\lambda^{t-s},
\end{align*}
and
\begin{align*}
d_{7}(t)=&\frac{1}{4\alpha_{t+1}}\sum_{i=1}^n\|x_{i,t+1}-x_{i,t}\|^2\\
&+\sum_{i=1}^n[\tilde{q}_{i,t+1}]^\top\hat{\nabla}_2g_{i,t}(x_{i,t})(x_{i,t+1}-x_{i,t}).
\end{align*}
\end{lemma}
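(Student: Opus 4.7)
The plan is to mirror the structure of Lemma~\ref{dbco:lemma_virtualbound} and adapt each step to Algorithm~\ref{dbco:algorithm-two}, where the dual update involves $c_{i,t}=\hat{\nabla}_2 g_{i,t-1}(x_{i,t-1})(x_{i,t}-x_{i,t-1})+g_{i,t-1}(x_{i,t-1})$ rather than $g_{i,t-1}(x_{i,t-1})$, and the gradient estimator $\hat{\nabla}_2$ has norm bounded independently of $\delta$. A preliminary step is to show that $\|c_{i,t}\|\le B_1$ for every $i$ and $t$: by the triangle inequality together with Lemma~\ref{dbco:lemma:uniformsmoothing}(e) applied componentwise to $g_{i,t-1}$ (giving $\|\hat{\nabla}_2 g_{i,t-1}(x_{i,t-1})\|\le \sqrt{m}p_i G_{g_i}$), \eqref{dbco:assfunction:ftgtupper}, and the diameter bound $\|x_{i,t}-x_{i,t-1}\|\le R_{\max}$ implied by projection onto $(1-\xi_t)\mathbb{X}_i\subseteq R_i\mathbb{B}^{p_i}$.

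For part~(a), proceed by induction exactly as in the proof of Lemma~\ref{dbco:lemma_virtualbound}(a). The base case $q_{i,1}=\tilde{q}_{i,2}=\mathbf{0}_m$ is immediate. For the inductive step, apply \eqref{dbco:lemma:projection:ab} to \eqref{dbco:algorithm-two:q} to obtain $\|q_{i,t+1}\|\le(1-\gamma_{t+1}\beta_{t+1})\|\tilde{q}_{i,t+1}\|+\gamma_{t+1}\|c_{i,t+1}\|$, then use the induction hypothesis, the bound $\|c_{i,t+1}\|\le B_1$, and monotonicity of $\{\beta_t\}$. The bound on $\|\tilde{q}_{i,t+2}\|$ then follows from convexity of the norm and double stochasticity of $W_{t+1}$.

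For part~(b), rewrite \eqref{dbco:algorithm-two:q} as $q_{i,t+1}=\sum_{j=1}^n[W_t]_{ij}q_{j,t}+\epsilon^q_{i,t}$ and use the nonexpansiveness \eqref{dbco:lemma:projection:none} together with part~(a) to get $\|\epsilon^q_{i,t}\|\le\|{-\gamma_{t+1}\beta_{t+1}\tilde{q}_{i,t+1}+\gamma_{t+1}c_{i,t+1}}\|\le 2B_1\gamma_{t+1}$. Then invoke Lemma~2 in \cite{lee2017sublinear} under Assumption~\ref{dbco:assgraph} to obtain the analogue of \eqref{dbco:lemma_qbarequeps-1} with $\sqrt{m}F_g$ replaced by $B_1$; \eqref{dbco:lemma_qbarequ-two} follows from $\sum_j[W_t]_{ij}=1$ and the convexity of the norm, just as in the original proof.

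For part~(c), expand $\|q_{i,t+1}-q\|^2$ via \eqref{dbco:lemma:projection:none} applied to \eqref{dbco:algorithm-two:q}, getting the usual four terms as in \eqref{dbco:qmu}. Bound the first term by $\sum_j[W_t]_{ij}\|q_{j,t}-q\|^2$ using convexity and double stochasticity; bound the second term by $(2B_1\gamma_{t+1})^2$ via part~(a); and bound the fourth term by $\beta_{t+1}\gamma_{t+1}(\|q\|^2-\|\tilde{q}_{i,t+1}-q\|^2)$ by discarding a nonnegative square. The critical new work occurs in the third term, $2\gamma_{t+1}(\tilde{q}_{i,t+1}-q)^\top c_{i,t+1}$: split $c_{i,t+1}$ into its two pieces. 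For the $g_{i,t}(x_{i,t})$ piece, write $\tilde{q}_{i,t+1}-q=(\bar{q}_t-q)+(\tilde{q}_{i,t+1}-\bar{q}_t)$; summing over $i$, the first part yields $(\bar{q}_t-q)^\top g_t(x_t)$ while the second part produces $d_6(t)$ after combining \eqref{dbco:lemma_qbarequ-two} with $\|g_{i,t}(x_{i,t})\|\le\sqrt{m}F_g$. For the $\hat{\nabla}_2g_{i,t}(x_{i,t})(x_{i,t+1}-x_{i,t})$ piece, separate $\tilde{q}_{i,t+1}^\top$ (which becomes part of $d_7(t)$) from $-q^\top$, and bound the latter using Young's inequality $ab\le\frac{\eta}{2}a^2+\frac{1}{2\eta}b^2$ with $\eta=2mp_i^2G_{g_i}^2\alpha_{t+1}$ so that the $\|q\|^2$ coefficient equals $mp_i^2G_{g_i}^2\alpha_{t+1}$ and the $\|x_{i,t+1}-x_{i,t}\|^2$ coefficient equals $\tfrac{1}{4\alpha_{t+1}}$, matching the definition of $d_7(t)$. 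Summing everything over $i$, dividing by $2\gamma_{t+1}$, and regrouping yields \eqref{dbco:gvirtualnorm-two}.

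The main obstacle is the coupled bookkeeping in part~(c): the $\hat{\nabla}_2 g_{i,t}(x_{i,t})(x_{i,t+1}-x_{i,t})$ contribution must be split carefully so that its ``good'' part fits inside $d_7(t)$ while its ``bad'' part is absorbed by $\|q\|^2$ with the precise coefficient $2mp_i^2G_{g_i}^2\alpha_{t+1}$ announced in the lemma; this forces a specific choice of the Young's-inequality parameter and must be coordinated with the choice made later in the proof of Theorem~\ref{dbco:corollaryreg-two} so that $d_7(t)$ can be cancelled against the primal-side decrease of $\|x_{i,t+1}-x_{i,t}\|^2$.
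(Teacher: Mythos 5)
Your proposal follows essentially the same route as the paper: bound $\|c_{i,t}\|$ by $B_1$ via Lemma~\ref{dbco:lemma:uniformsmoothing}(e), replay the induction and consensus arguments of Lemma~\ref{dbco:lemma_virtualbound} with $g_{i,t-1}(x_{i,t-1})$ replaced by $c_{i,t}$, and in part (c) split the cross term $2\gamma_{t+1}(\tilde q_{i,t+1}-q)^\top c_{i,t+1}$ exactly as in the paper's \eqref{dbco:qmu-two}, handling the new piece $-2\gamma_{t+1}q^\top\hat{\nabla}_2 g_{i,t}(x_{i,t})(x_{i,t+1}-x_{i,t})$ with a Young's inequality tuned to produce the coefficients $mp_i^2G_{g_i}^2\alpha_{t+1}$ on $\|q\|^2$ and $\tfrac{1}{4\alpha_{t+1}}$ on $\|x_{i,t+1}-x_{i,t}\|^2$, which is precisely \eqref{dbco:qmu2-two}. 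The only slip is the claim $\|x_{i,t}-x_{i,t-1}\|\le R_{\max}$: two points of $R_i\mathbb{B}^{p_i}$ can be $2R_i$ apart, so the correct bound is $2R_i$ (as the paper uses in \eqref{dbco:lemma_qbarequb-two}); this is a constant-factor detail that does not affect the structure of the argument.
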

\begin{proof}
From the fifth part in Lemma~\ref{dbco:lemma:uniformsmoothing} and \eqref{dbco:assfunction:functionLipg}, we know that for all $i\in[n]$, $x\in(1-\xi_{i,t})\mathbb{X}_i$, and $ t\in\mathbb{N}_+$,
\begin{align}\label{dbco:lemma_virtualboundgradg}
\|\hat{\nabla}_2 g_{i,t}(x)\|\le\sqrt{m}p_iG_{g_i}.
\end{align}
Hence, \eqref{dbco:algorithm-two:b}, \eqref{dbco:domainupper}, (\ref{dbco:assfunction:ftgtupper}), and \eqref{dbco:lemma_virtualboundgradg} yield
\begin{align}\label{dbco:lemma_qbarequb-two}
&\|c_{i,t+1}\|
\le\|g_{i,t}(x_{i,t})\|+\|\hat{\nabla}_2 g_{i,t}(x_{i,t})\|
\|(x_{i,t+1}-x_{i,t})\|\nonumber\\
&\le \sqrt{m}F_{g_i}+2\sqrt{m}p_iG_{g_i}R_{i}\le B_1,~\forall i\in[n],~\forall t\in\mathbb{N}_+.
\end{align}

Replacing $z_{i,t}$ and $g_{i,t}(z_{i,t})$ by $x_{i,t}$ and $c_{i,t+1}$, respectively, and following steps similar to those used to prove (\ref{dbco:lemma_virtualboundeqy}) and \eqref{dbco:lemma_qbarequ} yields (\ref{dbco:lemma_virtualboundeqy-two}) and \eqref{dbco:lemma_qbarequ-two}.

Applying (\ref{dbco:lemma:projection:none}) to (\ref{dbco:algorithm-two:q}) yields
\begin{align}
&\|q_{i,t}-q\|^2
\le\Big\|(1-\beta_t\gamma_t)\tilde{q}_{i,t}+\gamma_tc_{i,t}-q\Big\|^2\nonumber\\
&=\|\tilde{q}_{i,t}-q\|^2+\gamma_t^2\|c_{i,t}-\beta_t\tilde{q}_{i,t}\|^2\nonumber\\
&~~~+2\gamma_t[\tilde{q}_{i,t}]^\top\hat{\nabla}_2 g_{i,t-1}(x_{i,t-1})(x_{i,t}-x_{i,t-1})\nonumber\\
&~~~-2\gamma_tq^\top\hat{\nabla}_2g_{i,t-1}(x_{i,t-1})(x_{i,t}-x_{i,t-1})\nonumber\\
&~~~+2\gamma_t[\tilde{q}_{i,t}-q]^\top g_{i,t-1}(x_{i,t-1})\nonumber\\
&~~~-2\beta_t\gamma_t[\tilde{q}_{i,t}-q]^\top\tilde{q}_{i,t},\label{dbco:qmu-two}
\end{align}
For the fourth term of the right-hand side of (\ref{dbco:qmu-two}),  \eqref{dbco:lemma_virtualboundgradg} and the Cauchy-Schwarz  inequality yield
\begin{align}
&-2\gamma_tq^\top\hat{\nabla}_2g_{i,t-1}(x_{i,t-1})(x_{i,t}-x_{i,t-1})\nonumber\\
&\le2\gamma_t(mp^2_iG^2_{g_i}\alpha_t\|q\|^2
+\frac{1}{4\alpha_t}\|x_{i,t}-x_{i,t-1}\|^2).\label{dbco:qmu2-two}
\end{align}

Replacing \eqref{dbco:qmu} by \eqref{dbco:qmu-two}, using \eqref{dbco:qmu2-two},
and following steps similar to those used to prove \eqref{dbco:gvirtualnorm} yields \eqref{dbco:gvirtualnorm-two}.
\end{proof}

\begin{lemma}\label{dbco:lemma_regretdelta-two}
Suppose Assumptions \ref{dbco:assgraph}--\ref{dbco:assfunction:function} hold. For all $i\in[n]$, let $\{x_{t}\}$ be the sequence generated by Algorithm~\ref{dbco:algorithm-two} and $\{y_t\}$ be an arbitrary sequence in $\mathbb{X}$, then
\begin{align}
&f_{t}(x_{t})-f_{t}(y_{t})\nonumber\\
&\le (\bar{q}_{t})^\top (g_{t}(y_{t})- g_{t}(x_{t}))+2d_{6}(t)-E_{\mathfrak{U}_{t}}[d_{7}(t)]\nonumber\\
&~~~+\sum_{i=1}^np_i^2G^2_{f_i}\alpha_{t+1}
+\sum_{i=1}^n\frac{2R_{i}\|y_{i,t+1}-y_{i,t}\|}{\alpha_{t+1}}\nonumber\\
&~~~+d_8(t)+\mathbf{E}_{\mathfrak{U}_{t}}[d_{9}(t)],~\forall t\in\mathbb{N}_+,\label{dbco:lemma_regretdeltaequ-two}
\end{align}
where
\begin{align*}
d_8(t)=&\sum_{i=1}^n\Big\{(\delta_{i,t}+R_{i}\xi_{t})
(\sqrt{m}G_{g_i}\|q_{i,t}\|+G_{f_i})\\
&~~~~~~~+\frac{2R^2_{i}(\xi_{t}-\xi_{t+1})}{\alpha_{t+1}}\Big\},\\
d_{9}(t)=&\frac{1}{2\alpha_{t+1}}\sum_{i=1}^n(\|\check{y}_{i,t}-x_{i,t}\|^2
-\|\check{y}_{i,t+1}-x_{i,t+1}\|^2),
\end{align*}
and $\check{y}_{i,t}=(1-\xi_{t})y_{i,t}$.
\end{lemma}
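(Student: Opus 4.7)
The plan is to mirror the structure of the proof of Lemma~\ref{dbco:lemma_regretdelta}, but replace the intermediate variable $z_{i,t}$ by $x_{i,t}$, the one-point estimator $\hat{\nabla}_1$ by the two-point estimator $\hat{\nabla}_2$, and use the sharper bound $\|\hat{\nabla}_2 f_{i,t}(x)\|\le p_i G_{f_i}$ from part (e) of Lemma~\ref{dbco:lemma:uniformsmoothing} instead of $\|\hat{\nabla}_1 f_{i,t}(x)\|\le p_i F_{f_i}/\delta_{i,t}$. Throughout I will use the smoothed functions $\hat{f}_{i,t}$ and $\hat{g}_{i,t}$, and the key inequalities from Lemma~\ref{dbco:lemma:uniformsmoothing} parts (a)--(c).

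First, I would insert $\hat{f}_{i,t}(x_{i,t})$ and $\hat{f}_{i,t}(\check{y}_{i,t})$ to bound $f_{i,t}(x_{i,t}) - f_{i,t}(y_{i,t})$, paying $G_{f_i}\delta_{i,t}$ and $G_{f_i}R_i\xi_t$ in the process; these absorb into $d_8(t)$. Then, since $\hat{f}_{i,t}$ is convex on $(1-\xi_t)\mathbb{X}_i$, I use $\hat{f}_{i,t}(x_{i,t})-\hat{f}_{i,t}(\check{y}_{i,t})\le \langle \nabla\hat{f}_{i,t}(x_{i,t}), x_{i,t}-\check{y}_{i,t}\rangle$ and replace $\nabla\hat{f}_{i,t}(x_{i,t})$ by $\mathbf{E}_{\mathfrak{U}_t}[\hat{\nabla}_2 f_{i,t}(x_{i,t})]$ using independence of $x_{i,t}$ from $\mathfrak{U}_t$. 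I then split as $\langle \hat{\nabla}_2 f_{i,t}(x_{i,t}), x_{i,t}-\check{y}_{i,t}\rangle = \langle b_{i,t+1}, x_{i,t}-\check{y}_{i,t}\rangle - \langle (\hat{\nabla}_2 g_{i,t}(x_{i,t}))^\top \tilde{q}_{i,t+1}, x_{i,t}-\check{y}_{i,t}\rangle$ using \eqref{dbco:algorithm-two:a}, and further split $x_{i,t}-\check{y}_{i,t} = (x_{i,t}-x_{i,t+1})+(x_{i,t+1}-\check{y}_{i,t})$.

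For the $b_{i,t+1}$ term with $x_{i,t+1}-\check{y}_{i,t}$, I apply the projection inequality \eqref{dbco:lemma:projection:xy} to the update \eqref{dbco:algorithm-two:x}, which produces $\|\check{y}_{i,t}-x_{i,t}\|^2-\|\check{y}_{i,t}-x_{i,t+1}\|^2-\|x_{i,t+1}-x_{i,t}\|^2$ over $2\alpha_{t+1}$. The first two terms I rewrite via $\check{y}_{i,t+1}$ exactly as in \eqref{dbco:lemma_regretdelta:equ11}, producing $d_9(t)$ together with $2R_i\|y_{i,t+1}-y_{i,t}\|/\alpha_{t+1}$ and the $\xi_t-\xi_{t+1}$ summand inside $d_8(t)$. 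For the $b_{i,t+1}$ term with $x_{i,t}-x_{i,t+1}$, I use Cauchy--Schwarz together with the bound $\|\hat{\nabla}_2 f_{i,t}(x_{i,t})\|\le p_i G_{f_i}$ and the trivial bound on the $\tilde{q}_{i,t+1}$ part to produce $p_i^2 G_{f_i}^2 \alpha_{t+1}$ plus a $\frac{1}{4\alpha_{t+1}}\|x_{i,t+1}-x_{i,t}\|^2$ term that will be absorbed by the projection-induced $-\|x_{i,t+1}-x_{i,t}\|^2/(2\alpha_{t+1})$; the piece that is kept in the negative sign becomes the $\|x_{i,t+1}-x_{i,t}\|^2/(4\alpha_{t+1})$ summand inside $d_7(t)$. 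For the $-\langle(\hat{\nabla}_2 g_{i,t}(x_{i,t}))^\top \tilde{q}_{i,t+1}, x_{i,t}-\check{y}_{i,t}\rangle$ term, I expand as before into its $x_{i,t+1}-x_{i,t}$ part (which is exactly the second summand of $d_7(t)$) plus its $x_{i,t+1}-\check{y}_{i,t}$ part; the latter I bound via convexity of $\hat{g}_{i,t}$, $\tilde{q}_{i,t+1}\ge{\bf 0}_m$, and part (a) of Lemma~\ref{dbco:lemma:uniformsmoothing}, giving a $[\bar{q}_t]^\top(\hat{g}_{i,t}(\check{y}_{i,t})-\hat{g}_{i,t}(x_{i,t}))$ term (which after applying the smoothing error and Lipschitz bound becomes $(\bar{q}_t)^\top(g_{i,t}(y_{i,t})-g_{i,t}(x_{i,t}))$ plus more $d_8(t)$ summands) and a $[\tilde{q}_{i,t+1}-\bar{q}_t]^\top(\cdots)$ term bounded by $2d_6(t)/n$ using \eqref{dbco:lemma_qbarequ-two} and \eqref{dbco:lemma_regretdeltaequ:gsmooth3}.

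The main bookkeeping obstacle is ensuring that the coupling terms $\frac{1}{4\alpha_{t+1}}\|x_{i,t+1}-x_{i,t}\|^2$ and $[\tilde{q}_{i,t+1}]^\top\hat{\nabla}_2 g_{i,t}(x_{i,t})(x_{i,t+1}-x_{i,t})$ are carried with the correct sign so that they form exactly $-\mathbf{E}_{\mathfrak{U}_t}[d_7(t)]$; this is what enables cancellation against the $+d_7(t)$ that appears in \eqref{dbco:gvirtualnorm-two} when the two lemmas are combined in the proof of Theorem~\ref{dbco:corollaryreg-two}. Finally, summing over $i\in[n]$, taking $\mathbf{E}_{\mathfrak{U}_{t+1}}$ to promote $\mathbf{E}_{\mathfrak{U}_t}[d_9(t)]$ to its stated form, and collecting the $\delta_{i,t}$ and $\xi_t$ error terms into $d_8(t)$ yields \eqref{dbco:lemma_regretdeltaequ-two}.
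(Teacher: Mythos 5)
Your proposal is correct and follows essentially the same route as the paper, whose proof of this lemma is precisely the substitution argument you describe: replace $z_{i,t}$ by $x_{i,t}$, $a_{i,t}$ by $b_{i,t}$, the bound $\|\hat{\nabla}_1f_{i,t}(x)\|\le p_iF_{f_i}/\delta_{i,t}$ by $\|\hat{\nabla}_2f_{i,t}(x)\|\le p_iG_{f_i}$, and delete the Cauchy--Schwarz step \eqref{dbco:qmu2} so that the cross term is retained inside $-d_7(t)$. The only point to tighten in your write-up is the sign convention in the middle paragraph: the retained $g$-cross term should end up as $\langle(\hat{\nabla}_2 g_{i,t}(x_{i,t}))^\top\tilde{q}_{i,t+1},\,x_{i,t}-x_{i,t+1}\rangle$, i.e.\ \emph{minus} the second summand of $d_7(t)$ (consistent with the $-\mathbf{E}_{\mathfrak{U}_t}[d_7(t)]$ in the statement), rather than the second summand itself.
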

\begin{proof}
Replacing $z_{i,t}$, $a_{i,t}$, and \eqref{dbco:lemma_regretdeltaequ:fsmooth3} by $x_{i,t}$, $b_{i,t}$, and
\begin{align}
\|\hat{\nabla}_2f_{i,t}(x)\|\le p_iG_{f_i},
\label{dbco:lemma_regretdeltaequ:fsmooth3-two}
\end{align}
respectively,
deleting \eqref{dbco:qmu2}, and following steps similar to those used to prove \eqref{dbco:lemma_regretdeltaequ} yields \eqref{dbco:lemma_regretdeltaequ-two}.
\end{proof}

\begin{lemma}\label{dbco:theoremreg-two}
Suppose Assumptions~\ref{dbco:assgraph}--\ref{dbco:assfunction:function} hold. For any $T\in\mathbb{N}_+$, let $\bsx_T$ be the sequence generated by Algorithm~\ref{dbco:algorithm-two}. Then, for any comparator sequence $\bsy_T\in\calX_{T}$,
\begin{subequations}
\begin{align}
&\mathbf{E}[\Reg(\bsx_T,\bsy_T)]\nonumber\\
&\le \sum_{t=1}^T\mathbf{E}[d_8(t)]+ \hat{C}_{0}\sum_{t=1}^T\gamma_{t+1}+\sum_{i=1}^n\frac{2R_{i}^2}{\alpha_{T+1}}
\nonumber\\
&~~~+\frac{1}{2}\sum_{t=1}^T\sum_{i=1}^n(\frac{1}{\gamma_{t+1}}
-\frac{1}{\gamma_{t}}-\beta_{t+1})\mathbf{E}[\|q_{i,t}\|^2]\nonumber\\
&~~~+\sum_{t=1}^T\sum_{i=1}^np_i^2G^2_{f_i}\alpha_{t+1}+\frac{2R_{\max}V(\bsy_T)}{\alpha_{T}},\label{dbco:theoremregequ-two}\\
&\mathbf{E}[\|[\sum_{t=1}^Tg_{t}(x_{t})]_+\|^2]\nonumber\\
&\le d_{10}(T)\bigg\{\sum_{t=1}^T\mathbf{E}[d_8(t)]+\hat{C}_{0}\sum_{t=1}^T\gamma_{t+1}+\sum_{i=1}^n\frac{2R_{i}^2}{\alpha_{T+1}}\nonumber\\
&~~~+\frac{1}{2}\sum_{t=1}^T\sum_{i=1}^n(\frac{1}{\gamma_{t+1}}
-\frac{1}{\gamma_{t}}-\beta_{t+1})\mathbf{E}[\|q_{i,t}-\hat{q}^*\|^2]\nonumber\\
&~~~+\sum_{t=1}^T\sum_{i=1}^np_i^2G^2_{f_i}\alpha_{t+1}+2T\sum_{i=1}^nF_{f_i}\bigg\},\label{dbco:theoremconsequ-two}
\end{align}
\end{subequations}
where $d_{10}(T)=2n(\frac{1}{\gamma_1}
+\sum_{t=1}^T(2mp_i^2G_{g_i}^2\alpha_{t+1}+\beta_{t+1}))$
and $\hat{q}^*=\frac{2[\sum_{t=1}^Tg_{t}(x_{t})]_+}
{d_{10}(T)}\in\mathbb{R}^m_{+}$.
\end{lemma}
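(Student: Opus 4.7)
The plan is to mirror the proof of Lemma~\ref{dbco:theoremreg}, using Lemmas~\ref{dbco:lemma_virtualbound-two} and \ref{dbco:lemma_regretdelta-two} as the building blocks in place of Lemmas~\ref{dbco:lemma_virtualbound} and \ref{dbco:lemma_regretdelta}. First, I would bound $\sum_{t=1}^T d_6(t)$ by applying the double-summation swap in \eqref{dbco:zeta}, obtaining $\sum_{t=1}^T d_6(t)\le \frac{2\sqrt{m}n^2 B_1 F_g\tau}{1-\lambda}\sum_{t=1}^T\gamma_{t+1}$. Second, I would telescope $-\sum_{t=1}^T \Delta_{t+1}/(2\gamma_{t+1})$ exactly as in \eqref{dbco:qmu7}, using $q_{i,1}={\bf 0}_m$ and discarding the nonnegative terminal term, yielding $\tfrac{n}{2\gamma_1}\|q\|^2+\tfrac12\sum_{t=1}^T\sum_{i=1}^n(\tfrac{1}{\gamma_{t+1}}-\tfrac{1}{\gamma_t}-\beta_{t+1})\|q_{i,t}-q\|^2$. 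Third, since in Algorithm~\ref{dbco:algorithm-two} the primal stepsize $\alpha_t$ is common to all agents, the telescoping of $\sum_t d_9(t)$ is analogous to \eqref{dbco:dyz} and is bounded by $\sum_{i=1}^n 2R_i^2/\alpha_{T+1}$.

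The key new step, and the main technical observation, is that the term $d_7(t)$ appears with a $+$ sign in \eqref{dbco:gvirtualnorm-two} and with $-\mathbf{E}_{\mathfrak{U}_t}[d_7(t)]$ in \eqref{dbco:lemma_regretdeltaequ-two}. Because $\mathbf{E}[\mathbf{E}_{\mathfrak{U}_t}[\cdot]]=\mathbf{E}[\cdot]$, after taking the full expectation $\mathbf{E}_{\calU_T}[\cdot]$ and summing the two inequalities over $t\in[T]$, the $d_7$ contributions cancel exactly. This cancellation is precisely what removes the need for a uniform upper bound $F_{g_i}$ on the constraint functions and is what distinguishes the two-point analysis from the one-point one.

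Fourth, with $d_7(t)$ eliminated, combining \eqref{dbco:gvirtualnorm-two} and \eqref{dbco:lemma_regretdeltaequ-two}, summing over $t\in[T]$, taking expectation, using $g_t(y_t)\le{\bf 0}_m$ for $\bsy_T\in\calX_T$, and collecting the $\|q\|^2$ contributions gives, for every $q\in\mathbb{R}^m_+$,
\begin{align*}
\mathbf{E}[g_c(q)]+\mathbf{E}[\Reg(\bsx_T,\bsy_T)]
&\le \sum_{t=1}^T\mathbf{E}[d_8(t)]+\hat{C}_0\sum_{t=1}^T\gamma_{t+1}+\sum_{i=1}^n\frac{2R_i^2}{\alpha_{T+1}}\\
&\quad+\sum_{t=1}^T\sum_{i=1}^n p_i^2 G_{f_i}^2\alpha_{t+1}+\sum_{t=1}^{T-1}\sum_{i=1}^n\frac{2R_i\|y_{i,t+1}-y_{i,t}\|}{\alpha_{t+1}}\\
&\quad+\tfrac12\sum_{t=1}^T\sum_{i=1}^n(\tfrac{1}{\gamma_{t+1}}-\tfrac{1}{\gamma_t}-\beta_{t+1})\mathbf{E}[\|q_{i,t}-q\|^2],
\end{align*}
where $g_c(q)=(\sum_{t=1}^T g_t(x_t))^\top q-\tfrac{d_{10}(T)}{4}\|q\|^2$; the coefficient $d_{10}(T)/4$ arises from gathering $\tfrac{n}{2\gamma_1}\|q\|^2$ together with $\tfrac12\sum_{i=1}^n(2mp_i^2 G_{g_i}^2\alpha_{t+1}+\beta_{t+1})\|q\|^2$ from \eqref{dbco:gvirtualnorm-two}.

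Finally, to obtain \eqref{dbco:theoremregequ-two} I substitute $q={\bf 0}_m$ (so $g_c\equiv 0$), set $y_{i,T+1}=y_{i,T}$, and use monotonicity of $\{\alpha_t\}$ to dominate $\sum_{t=1}^{T-1}\sum_{i=1}^n\|y_{i,t+1}-y_{i,t}\|/\alpha_{t+1}\le V(\bsy_T)/\alpha_T$ with the constant $2R_{\max}$ coming out front. To obtain \eqref{dbco:theoremconsequ-two} I substitute $q=\hat{q}^*$, which by completing the square satisfies $g_c(\hat{q}^*)=\|[\sum_{t=1}^T g_t(x_t)]_+\|^2/d_{10}(T)$, take $\bsy_T=\check{\bsx}^*_T$, and absorb the regret via the crude bound $|\Reg(\bsx_T,\check{\bsx}^*_T)|\le 2T\sum_{i=1}^n F_{f_i}$ exactly as in \eqref{dbco:ff}, then multiply through by $d_{10}(T)$. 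The only nontrivial ingredient beyond routine algebra is the $d_7$ cancellation described above; everything else is a direct transcription of the one-point argument.
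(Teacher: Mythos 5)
Your proposal is correct and takes essentially the same route as the paper, whose own proof of this lemma is just the one-line remark that it ``follows steps similar to those used to prove Lemma~\ref{dbco:theoremreg}'' given Lemmas~\ref{dbco:lemma_virtualbound-two} and \ref{dbco:lemma_regretdelta-two}. Your expansion of those steps — in particular the observation that $+d_7(t)$ in \eqref{dbco:gvirtualnorm-two} and $-\mathbf{E}_{\mathfrak{U}_t}[d_7(t)]$ in \eqref{dbco:lemma_regretdeltaequ-two} cancel under the full expectation, which is exactly why the $c_{i,t}$ correction term lets the two-point algorithm dispense with the bound $F_{g_i}$ — is faithful to what the paper's construction is designed to do.
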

\begin{proof}
With Lemmas~\ref{dbco:lemma_virtualbound-two} and \ref{dbco:lemma_regretdelta-two} at hand, the proof of Lemma~\ref{dbco:theoremreg-two} follows steps similar to those used to prove Lemma~\ref{dbco:theoremreg}.
\end{proof}

Finally, with Lemmas~\ref{dbco:lemma_virtualbound-two}--\ref{dbco:theoremreg-two} at hand, the proof of \eqref{dbco:corollaryregequ1-two} and \eqref{dbco:corollaryconsequ-two} follows steps similar to those used to prove \eqref{dbco:corollaryregequ1} and \eqref{dbco:corollaryconsequ}.
\end{document}